\documentclass[12pt]{article}

\usepackage{microtype}
\usepackage[latin9]{inputenc}
\usepackage{amstext}
\usepackage{amsthm}
\usepackage{amssymb}
\usepackage{soul}
\usepackage{xfrac}
\usepackage{amsmath}
\usepackage{bm}
\usepackage{enumitem}
\usepackage[dvipsnames]{xcolor}

\makeatletter

\numberwithin{equation}{section}
\numberwithin{figure}{section}

\usepackage{amsthm}
\usepackage{fullpage}
\usepackage{makeidx}
\usepackage{amsfonts}
\usepackage{latexsym}
\numberwithin{equation}{section}

\numberwithin{figure}{section}

\newtheorem{thm}{Theorem}[section]
\newtheorem{prop}[thm]{Proposition}
\newtheorem{lemma}[thm]{Lemma}
\newtheorem{defn}[thm]{Definition}

\theoremstyle{remark}
\newtheorem{rem}[thm]{Remark}
\newtheorem{exa}[thm]{Example}
\usepackage{babel}

\newcommand{\Z}{\mathbb{Z}}

\newcommand{\C}{\mathbb{C}}

\newcommand{\q}{\quad}
\newcommand{\Aut}{\mathrm{Aut}}
\newcommand{\Rep}{\mathrm{Rep}}
\newcommand{\irr}{\mathrm{Irr}\,}

\usepackage{babel}

\usepackage[blocks]{authblk}
\title{  Permutation orbifolds and simple current extensions}
\date{}
\author{Ching Hung Lam and Nina Yu}

\makeatother

\begin{document}
\maketitle

\begin{abstract} In this article, we study permutation orbifolds and simple current extensions in the framework of vertex operator (super)algebras. We extend the construction of permutation-twisted modules for tensor products of vertex operator algebras to vertex operator superalgebras with $\frac12\mathbb Z$-grading, including the effect of the canonical involution. 
Using tensor category methods and simple current extensions, we build an induction theory for permutation-twisted modules associated with solvable automorphism groups, arising from semidirect products of simple current automorphisms and cyclic permutations.
In particular, we describe the structure and classification of irreducible twisted modules in terms of stabilizer subgroups and associated projective representations, and determine their multiplicities explicitly. As applications, we illustrate the theory with explicit examples from code vertex operator algebras, lattice-type simple current extensions, and the Moonshine vertex operator algebra.

\vspace{.5em}

\noindent{\bf Keywords}: Vertex superalgebras, twisted modules, simple current extensions, permutation orbifolds.
\vspace{.5em}
\end{abstract}

\section{Introduction}
Let $V$ be a vertex operator algebra (abbreviated as VOA). Let $G\le\Aut(V)$  be a finite automorphism subgroup. The fixed-point subalgebra
\[
V^G=\{ v\in V\mid gv=v \text{ for all } g\in G\}
\] 
is often called the  {\it orbifold subVOA} of $V$.  Orbifold theory mainly studies the representation theory of $V^G$ and its relation to that of $V$. 

When $V$ is $C_2$-cofinite and rational and $G$ is finite, there is a famous conjecture that $V^G$ is also $C_2$-cofinite and rational. Moreover, all irreducible $V^G$-modules can be realized as submodules of some irreducible $g$-twisted modules of $V$ with $g\in G$. This conjecture is proved for finite solvable groups \cite{CM,Mi}, but the conjecture is still open for a general finite group $G$.  Under the  assumption that  $V^G$ is $C_2$-cofinite and rational,  it is proved in \cite{DRX} that all irreducible $V^G$-modules can be realized as submodules of some irreducible $g$-twisted $V$-modules  with $g\in G$. 

Thus, the construction and classification of twisted modules play a central role in orbifold theory. In   \cite[Theorem 9.1]{DLM2}, it is also proved that if $V$ is a simple $C_2$-cofinite VOA and that $g\in\Aut(V)$ has finite order, then $V$ has at least one simple $g$-twisted
module.  Unfortunately, explicit constructions for $g$-twisted modules are not known in general.

As far as we know, such constructions are available only for a few special classes of VOAs and certain specific types of automorphisms. One fundamental construction applies to lattice VOA $V_L$ associated with a positive definite even lattice $L$ and a lift of an isometry $g\in O(L)$ \cite{Le,DL}. Another important class arises from inner automorphisms. More precisely, suppose that a VOA $V$ contains  a primary weight one vector $h$ such  that $h(0)$ acts semisimply on $V$ and with eigenvalues in $\frac{1}T \Z$  for some positive integer $T$. In this case,  there is an explicit construction for $\sigma_h$-twisted modules, where $\sigma_h= e^{2\pi i h(0)} \in \Aut(V)$ \cite{DLM}.

There are also constructions coming from extension theory.  Suppose that $V$ is a simple current extension of   a simple rational $C_2$-cofinite VOA $V^0$ of CFT type. In this setting, the representation theory of $V$  can be developed from that   of $V^0$  \cite{La,Y1}. In particular, a construction of some twisted modules using a notion of induced modules has been discussed (see Section \ref{S:3.4}). 
Another basic and highly important case is given by permutation orbifolds. Let $V$ be a VOA and let $k\ge 2$ be a positive integer. Consider the tensor product VOA
$V^{\otimes k}$. Then the $k$-cycle
$\sigma= (1 2\cdots k)\in S_k$ acts naturally as an automorphism of $V^{\otimes k}$ by permuting the tensor factors. In this setting,  explicit constructions of $\sigma$-twisted $V^{\otimes k}$-modules are given in \cite{BDM} (see Section \ref{sec3}). 

Despite the progress described above, a general construction theory for twisted modules remains incomplete.  One of the main aims of this article is to combine permutation automorphisms with simple current symmetries  and to study and analyze  the structures of the corresponding twisted modules systematically. 
The motivation for studying this problem is that these two constructions occur naturally in many important examples. 
Code VOAs, framed VOAs, and more general simple current extensions of tensor product VOAs carry both permutation symmetries of the tensor factors and simple current symmetries coming from the extension. There are also many such examples in the famous Moonshine VOA.  

Let $V$ be a  simple rational $C_2$-cofinite VOA of CFT type. Suppose that $U$ is a simple current extension of a tensor product VOA $V^{\otimes k}$. Let $g$ be an automorphism of $U$ such that $g$
acts as a permutation of the tensor factors of $V^{\otimes k}$.  In this case, every irreducible $g$-twisted module of $U$ is a direct sum of  irreducible $g$-twisted $V^{\otimes k}$-modules. The main task is to understand how an irreducible $g$-twisted module of $U$ will be decomposed as irreducible $g$-twisted $V^{\otimes k}$-modules. Therefore, one needs to understand how the extension data act on these sectors. This interaction produces genuinely new features, such as stabilizer subgroups, projective representations, and multiplicities. We believe that understanding these cases will be a natural step towards a more systematic construction for twisted modules.

In this article, under the above assumptions on $V$, we study simple current extensions of the tensor product VOA $V^{\otimes k}$. 
Using the framework of modular tensor categories and  the theory of simple current extensions, we develop an induction theory for permutation-twisted modules associated with solvable automorphism groups,  arising from semidirect products of simple current automorphisms and cyclic permutations.
In particular, we describe the structure and classification of irreducible twisted modules in terms of stabilizer subgroups and associated projective representations. We determine the decomposition rules and   multiplicities of induced modules under the extension. In addition, we consider the tensor product of vertex operator superalgebras with $\frac12\mathbb Z$-grading. We also discuss the construction of permutation-twisted modules for vertex operator subalgebras associated with some even binary codes and  analyze the effect of the canonical involution on the resulting module structure.

The organization of this article is as follows. In Section 2, we recall the basic notions of vertex operator superalgebras, their twisted modules, and  fix some necessary notation. In Section 3, we review the theory of permutation orbifolds. In particular,  we recall the construction of $\sigma$-twisted $V^{\otimes k}$-modules from \cite{BDM}, where $\sigma= (1 2\cdots k)\in S_k$ is a $k$-cycle acting naturally on $V^{\otimes k}$ by permuting the tensor factors. We also discuss a generalization of this construction to  tensor product of VOSA with $\frac12\mathbb Z$ grading \cite{B,BW}. In addition, we review the tensor products for twisted modules using the categorical approach developed in \cite{KO,CKM}. For later use, we also review results on fusion rules between untwisted $V^{\otimes k}$-modules and $\sigma$-twisted $V^{\otimes k}$-modules obtained in \cite{DLXY}.
 In Section 4, we recall the notion of simple current extension and discuss their representation theory. In particular, a notion of induced modules is discussed.  
 Section 5 contains our main results. We study simple current extensions of the tensor product VOA $V^{\otimes k}$ and develop an induction theory for permutation-twisted modules associated with some solvable automorphism groups  arising as semidirect products of simple current automorphisms and cyclic permutations. We also describe the structure and classification of irreducible twisted modules in terms of stabilizer subgroups and associated projective representations. 
Finally, in Section 6, we illustrate the theory with several explicit examples. 

\section{Basics}\label{sec:2}

In this section, we review some basic notions of vertex  operator (super)algebra and their twisted modules
\cite{Bo,DL,FHL,FLM} and fix some necessary notation. Throughout this
paper, $z_{0}, z_{1},z_{2}$ are independent commuting formal variables.

\emph{A super vector space} is a $\mathbb{Z}_{2}$-graded vector space
$V=V_{\overline{0}}\oplus V_{\overline{1}}.$ The elements in $V_{\overline{0}}$ (resp.
$V_{\overline{1}}$) are called even (resp. odd). Let $\left|v\right|$
be 0 if $v\in V_{\overline{0}}$ and $1$ if $v\in V_{\overline{1}}.$

\begin{defn}A \emph{vertex superalgebra} is a quadruple $(V, {\bf 1},D,Y)$, where $V=V_{\overline{0}}\oplus V_{\overline{1}}$
is a $\mathbb{Z}_{2}$-graded vector space, $D$ is an endomorphism
of $V$, $1\in V_{\overline{0}}$ is  the vacuum vector of $V$, and
$Y$ is a linear map
\begin{alignat*}{1}
Y\left(\cdot,z\right):  & V\to\left(\mathrm{End}V\right)\left[\left[z,z^{-1}\right]\right]\\
 & v\mapsto Y\left(v,z\right)=\sum_{n\in\mathbb{Z}}v_{n}z^{-n-1}\ (\text{where\ }v_{n}\in\mathrm{End}V)
\end{alignat*}
 such that the following axioms hold: 
\begin{enumerate}[label=\textnormal{(\arabic*)}]

	\item  (Lower truncation)  For any $u, v\in V, u_{n}v=0$ for all sufficiently large $n$.

\item (Translation covariance) $\left[D, Y\left(v, z\right)\right]=Y\left(D\left(v\right), z\right)=\frac{d}{dz}Y\left(v, z\right)$
for all $v\in V.$

\item (Vacuum) $Y\left({\bf 1}, z\right)=\mathrm{Id}_{V}$ (the identity operator of
$V$).
	\item (Creation) $Y\left(v, z\right){\bf1}\in\left(\mathrm{End}V\right)\left[\left[z\right]\right]$
and $\lim_{z\to0}Y\left(v,z\right){\bf 1}=v$ for all $v\in V.$

\item (Super Jacobi identity) For $\mathbb{Z}_{2}$-homogeneous elements $u, v\in V$, the following
Jacobi identity holds:
\begin{gather*}
z_{0}^{-1}\delta\left(\frac{z_{1}-z_{2}}{z_{0}}\right)Y\left(u,z_{1}\right)Y\left(v,z_{2}\right)-\left(-1\right)^{\left|u\right|\left|v\right|}z_{0}^{-1}\delta\left(\frac{z_{2}-z_{1}}{-z_{0}}\right)Y\left(v,z_{2}\right)Y\left(u,z_{1}\right)\\
=z_{2}^{-1}\delta\left(\frac{z_{1}-z_{0}}{z_{2}}\right)Y\left(Y\left(u,z_{0}\right)v,z_{2}\right).
\end{gather*}
\end{enumerate}
\end{defn}

\begin{defn}
    
A \emph{ vertex operator superalgebra (VOSA)} is a vertex superalgebra $(V, {\bf 1},D,Y)$ together with a conformal vector $\omega\in V_{\overline 0}$ such that 
$$Y\left(\omega,z\right)=\sum_{n\in\mathbb{Z}}L\left(n\right)z^{-n-2}$$ and the operators $L(n)$ satisfy:
\begin{enumerate}[label=\textnormal{(\arabic*)}]
\item $\left[L\left(m\right),L\left(n\right)\right]=\left(m-n\right)L\left(m+n\right)+\frac{1}{12}\left(m^{3}-m\right)\delta_{m+n,0}c,$
	for all $m, n\in\mathbb Z$ and for some $c\in\mathbb C$, called the \emph{central charge}. 
	
\item  $L\left(-1\right)=D,$ i.e., $\frac{d}{dz}Y\left(v,z\right)=Y\left(L\left(-1\right)v,z\right)$
	for $v\in V.$
	
\item $L(0)$ acts semisimply on $V$ with eigenvalues in $\frac{1}{2}\mathbb{Z}.$
	
\item  $\dim V_k<\infty $ and $V_k=0$ for sufficiently small $k$, where $V_k= \{ v\in V\mid L(0) v=kv\}$ for $k\in \frac{1}{2}\mathbb{Z}$.  If $v\in V_k$, $k$ is called the \emph{weight} (or conformal weight) of $v$ and denoted by $\mathrm{wt}v$.
\end{enumerate}
	
\end{defn}

\noindent \textbf{Remark.} A vertex algebra (VA) is the special case of a vertex superalgebra with trivial odd part $V_{\bar 1}=0.$  A VOA is a conformal vertex algebra.

\begin{defn}   An
\emph{automorphism} $g$ of
{
 a vertex algebra $V$ is a linear automorphism of $V$
 such that $gY\left(v,z\right)g^{-1}=Y\left(gv,z\right)$
for all $v\in V$ (and hence $g\bf{1}=\bf{1}$).

If $V$ is a VOSA with the conformal vector $\omega$, we further require that $g\omega =\omega$, that is,   $g$  preserves the conformal structure. 
}
\end{defn}

 Let $V$ be a vertex superalgebra and let $g\in \Aut(V)$ be
of finite order $T$. Fix a primitive $T$-th root of unity $\eta$. Then $V$ decomposes as a  direct sum of the
eigenspaces $V^{j}$ of $g$:
\[
V=\bigoplus_{j\in\mathbb{Z}/T\mathbb{Z}}V^{j},
\]
where $V^{j}=\left\{ v\in V\mid gv=\eta^{j}v\right\}.$

\begin{defn}  Let $V$ be a vertex superalgebra. A \emph{weak g-twisted $V$-module
}is a vector space $M$
equipped
with a linear map
\begin{align*}
Y_{g}(\cdot,z):\  & V\to({\rm End}\;M)[[z^{1/T},z^{-1/T}]]\\
 & v\mapsto Y_{g}(v,z)=\sum_{n\in\frac{1}{T}\mathbb{Z}}v_{n}^{g}z^{-n-1},
\end{align*}
  such
that for $u,v\in V$ and $w\in M$ the following conditions hold:
\begin{enumerate}[label=\textnormal{(\arabic*)}]
\item For any $v\in V$ and $ w\in M$, $v_{n}^{g}w=0$ for $n$ sufficiently
large.

\item $Y_{g}({\bf 1},z)=\mathrm{Id}_{M}$ (the identity operator on $M$).

\item For $u, v\in V$ of homogeneous parity, the following Jacobi identity
holds
\begin{gather*}
z_{0}^{-1}\delta\left(\frac{z_{1}-z_{2}}{z_{0}}\right)Y_{g}(u,z_{1})Y_{g}(v,z_{2})-(-1)^{|u||v|}z_{0}^{-1}\delta\left(\frac{z_{2}-z_{1}}{-z_{0}}\right)Y_{g}(v,z_{2})Y_{g}(u,z_{1})\\
=\frac{z_{2}^{-1}}{T}\sum_{j\in\mathbb{Z}/T\mathbb{Z}}\delta\left(\eta^{j}\frac{\left(z_{1}-z_{0}\right)^{1/T}}{z_{2}^{1/T}}\right)Y_{g}(Y(g^{j}u,z_{0})v,z_{2}).
\end{gather*}
\end{enumerate}
If $g=1$, then a weak $g$-twisted $V$-module is a weak $V$-module.
\end{defn}

Throughout this paper, $ \mathbb N=\{0,1,2,\ldots\}$ denotes the set of all natural numbers. 

\begin{defn} 
Let $V$ be a VOSA. A \emph{$\mathbb N$-graded $g$-twisted $V$-module} is a weak $g$-twisted $V$-module $M$ which carries a $\mathbb N$-grading (or a $\frac{1}{2T}\mathbb{N}$-grading), i.e., $$M=\bigoplus_{k\in \mathbb{ N}}M(\frac{1}{2T} k)=\bigoplus_{n\in \frac{1}{2T} \mathbb{ N}} M(n)$$ such that, for homogeneous $v\in V$, $m\in \frac{1}T \Z, n\in \frac{1}{2T} \mathbb{N}$,  $$v^g_mM(n)\subseteq M(n+\mathrm{wt}v-m-1).$$ 
We may assume that $M(0)\neq 0$.  Note also that $\mathrm{wt}v\in \frac{1}2 \Z$. 

If $g=1$, then $M$ is called a $\mathbb N$-graded $V$-module.
\end{defn}

\begin{defn}
		Let $V$ be a VOSA.	
An ordinary \emph{$g$-twisted $V$-module} is
a weak $g$-twisted $V$-module $M$ satisfying the condition that
$M=\oplus_{h\in\mathbb{C}}M_{(h)}$, where $M_{(h)}=\left\{ w\in M\mid L(0)^{g}w=hw\right\}$ for $L\left(0\right)^{g}=\omega_{1}^{g},$
$\dim M_{(h)}<\infty$ for all $h\in\mathbb{C}$, and $M_{(h+\frac{n}{T})}=0$
for fixed $h\in\mathbb{C}$ and for all sufficiently small integers
$n$. 
\smallskip

If $g=1$, then an ordinary $g$-twisted $V$-module is an ordinary
$V$-module.

\end{defn}

\medskip

Now we consider the action of Aut$\left(V\right)$ on twisted modules.
Let $g,h\in\text{Aut}\left(V\right)$ with $g$ of finite order.
If $\left(M,Y_{M}\right)$
is a weak $g$-twisted $V$-module, there is a weak $h^{-1}gh$-twisted
$V$-module $\left( M\circ h,Y_{ M\circ h}\right)$, where $ M\circ h\cong M$
as a vector space and the vertex operator map is given by  $Y_{ M\circ h}\left(v,z\right)=Y_{M}\left(hv,z\right)$
for $v\in V$. This defines a right action of $\text{Aut}\left(V\right)$
on weak twisted $V$-modules (and hence on the isomorphism classes of weak twisted
$V$-modules). We write $$ \left(M,Y_{M}\right)\circ h:=\left( M\circ h,Y_{M\circ h}\right)=M\circ h.$$
We say $M$ is \emph{$h$-stable} if $M\cong M\circ h$ as twisted $V$-modules.
In particular, if $M$ is an $\mathbb N$-graded $g$-twisted $V$-module, then   $ M\circ g\cong M$  \cite{DLM2}.

\section{Permutation orbifold}\label{sec3}
Let $V$ be a VOA and let $k\ge 2$ be a positive integer. 
Let $\sigma= (1 2\cdots k)\in S_k$ be the $k$-cycle, which acts naturally as an automorphism of $V^{\otimes k}$ by permuting the tensor factors. 
In this section, we first recall the construction of $\sigma$-twisted modules for $V^{\otimes k}$ arising from permutation orbifolds, together with its extension to vertex operator superalgebras. We also review tensor product structures and the correspondence between intertwining operators and fusion rules, which will be used later to study tensor product decompositions. 
\subsection{Structure of twisted modules}
We first recall the construction of $\sigma$-twisted $V^{\otimes k}$-modules from \cite{BDM}. Let 
\[
\Delta_k(z)=\exp\left(\sum_{n\geq 1}a_nz^{-\frac{n}k}L(n)\right)k^{-L(0)}z^{(1/k-1)L(0)}, 
\]
where the coefficients $a_n (n \geq 1)$ are uniquely determined by the identity
\[
\exp \left( 
\sum_{n\geq 1} -a_n x^{n+1}\frac{d}{dx} \right) x= \frac{1}k(1+x)^{k} -\frac{1}k.
\]

The following is one of the main results of \cite{BDM}. 
\begin{thm}\label{thm:3.1} Let $V$ be a VOA and let  $\sigma= (1 2\cdots k) \in\Aut(V^{\otimes k})$. Then for any (weak,
$\mathbb N$-graded) $V$-module $(W, Y_W )$, there exists a (weak, $\mathbb N$-graded) $\sigma$ twisted $V^{\otimes k}$-module $(T_{\sigma}(W), Y_{T_{\sigma}(W )} )$ such that
$T_{\sigma}(W)=W$ as a vector space and 
$$Y_{T_{\sigma}(W)}(u^1, z)=Y_W(\Delta_k(z)u, z^{1/k}), \quad u \in V,$$  where 
$u^1=u\otimes \textbf{1}^{\otimes(k-1)}\in V^{\otimes k}$. Furthermore, every (weak, $\mathbb N$-graded) $\sigma$-twisted $V^{\otimes k}$-module is isomorphic to one of this form.
\end{thm}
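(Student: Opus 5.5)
The construction has two halves: build the twisted module $T_\sigma(W)$ from a $V$-module $W$, then show every $\sigma$-twisted module is of this form.

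\textbf{Existence.} Only the single-factor vectors $u^1$ are given by the formula. The other factors are defined by twisting:
\[
Y_{T_\sigma(W)}(u^{j+1},z)=\lim_{z^{1/k}\to \eta^{-j}z^{1/k}}Y_{T_\sigma(W)}(u^1,z),
\]
where $u^{j+1}=\sigma^{j}(u^1)$ and $\eta=e^{2\pi i/k}$. The operator attached to a general tensor $u_1\otimes\cdots\otimes u_k$ is then the normal-ordered product of the operators of its factors. To see that this defines a weak $\sigma$-twisted module, I will use the standard criterion (cf.\ Li's local-system theory for twisted modules). It suffices to check two things:
\begin{enumerate}[label=(\roman*)]
\item the fields $Y_{T_\sigma(W)}(u^{j},z)$ are mutually local and have the correct $\sigma$-monodromy;
\item the twisted Jacobi identity holds for the generating vectors $u^1$ and $v^1$.
\end{enumerate}
Because $V^{\otimes k}$ is generated by the $u^{j}$, the full Jacobi identity then extends to all of $V^{\otimes k}$.

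\textbf{Jacobi identity for $u^1,v^1$ (main obstacle).} This is a change-of-variables computation. The map $x\mapsto \frac1k(1+x)^k-\frac1k$ is the local form of $z\mapsto z^k$, and $\Delta_k(z)$ is exactly the operator implementing this conformal change of coordinate. Via Huang's change-of-variable formula, $\Delta_k(z)$ satisfies the conjugation identity
\[
\Delta_k(z_2)Y(u,z_0)\Delta_k(z_2)^{-1}=Y\bigl(\Delta_k(z_2+z_0)u,\,(z_2+z_0)^{1/k}-z_2^{1/k}\bigr).
\]
I expect proving this identity to be the main obstacle; it requires careful manipulation of $\exp(\sum a_n x^{n+1}\tfrac{d}{dx})$ and of the $L(n)$-actions. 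With it in hand, I substitute $z_j\mapsto z_j^{1/k}$ into the untwisted Jacobi identity of $W$ and apply the $\delta$-function identity that produces the sum $\frac1k\sum_j\delta(\eta^j(z_1-z_0)^{1/k}/z_2^{1/k})$. This yields the twisted Jacobi identity. The terms with $j\ne0$ correspond to $Y(\sigma^j u^1,z_0)v^1$, and these vanish singularly, which matches the commutativity of distinct tensor factors.

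\textbf{Gradings.} The $\mathbb N$-graded property follows because $\Delta_k$ rescales $L(0)$-weights by $1/k$ and shifts them. Concretely, one computes the twisted Virasoro operator
\[
L^\sigma(0)=\tfrac1k L(0)+\tfrac{(k^2-1)c}{24k}.
\]

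\textbf{Completeness.} Given a $\sigma$-twisted $V^{\otimes k}$-module $M$, I define
\[
Y_W(u,z)=Y_M\bigl(\Delta_k(z^k)^{-1}u^1,\,z^k\bigr).
\]
Reversing the computation above shows that $(M,Y_W)$ is a weak $V$-module. The twisted Jacobi identity restricted to the first factor becomes the untwisted one after the substitution $z\mapsto z^k$, and the monodromy $z^{1/k}\mapsto\eta z^{1/k}$ disappears. It also shows that the action of the remaining factors $u^{j}$ is forced by $\sigma$-twisting, so that $M\cong T_\sigma(W)$. The two functors $W\mapsto T_\sigma(W)$ and $M\mapsto W$ are mutually inverse, which preserves the weak and $\mathbb N$-graded versions of the statement.
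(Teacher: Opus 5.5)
The paper does not prove this theorem; it quotes it from Barron--Dong--Mason. Your outline has the same architecture as their argument:
\begin{itemize}
\item the other factors are defined through the branch change $z^{1/k}\mapsto\eta^{-j}z^{1/k}$;
\item the conjugation identity for $\Delta_k$ is the key tool;
\item the construction is reduced to generators;
\item the twisted Virasoro formula is $L^\sigma(0)=\frac1kL(0)+\frac{(k^2-1)c}{24k}$;
\item the inverse is given by $Y_M((\Delta_k(z^k)^{-1}u)^1,z^k)$.
\end{itemize}

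\textbf{The gap.} One step fails as written: the operator of a mixed tensor is \emph{not} the normal-ordered product of the factor fields. In a $\sigma$-twisted module, the fields of distinct factors are not $\sigma$-eigenfields, and they do not commute: their operator product is singular on the branch $z_1^{1/k}=\eta^{i-j}z_2^{1/k}$. The correct operator is the twisted iterate, which carries correction terms.

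Already for $k=2$, with $V=W$ the rank-one Heisenberg VOA ($\langle h,h\rangle=1$), the naive definition fails. Here $Y(h^1,z)=\frac12\sum_n h(n)z^{-n/2-1}$ and $Y(h^2,z)=\frac12\sum_n (-1)^n h(n)z^{-n/2-1}$. Write $h\otimes h=\frac12\bar h_{-1}\bar h-(\omega^1+\omega^2)$, where $\bar h=h^1+h^2$ is fixed by $\sigma$, so its iterate is the ordinary normal-ordered square. Use also $Y(\omega^1+\omega^2,z)=\frac12\sum_m L(2m)z^{-m-2}+\frac1{16}z^{-2}$. The module axioms then force
$Y(h\otimes h,z)={:}Y(h^1,z)Y(h^2,z){:}-\frac1{16}z^{-2}$,
with normal ordering taken with respect to the modes $h(n)$.

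Consequently, your remark that the $j\neq0$ terms of the twisted Jacobi identity for $(u^1,v^1)$ ``vanish singularly'' verifies nothing. Their singular parts do vanish, but their regular parts contain $Y(u^{j+1}_{-1}v^1,z_2)$. Those regular parts are exactly what determine the mixed operators, and with your definition the identity is false.

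\textbf{The repair.} Let the mixed operators be produced, not prescribed. The fields $Y(u^j,z)$ are mutually local, so by Li's theorem they generate a vertex algebra $A$ of twisted fields. $A$ carries an automorphism induced by the monodromy, and $W$ is a twisted $A$-module. You then need to prove three things.
\begin{enumerate}
\item[(a)] The map $u\mapsto Y(u^1,z)$ is a vertex algebra homomorphism $V\to A$. This is the real ($j=0$) content of your step (ii). It is where the conjugation identity is combined with the Jacobi identity of $W$ at $x_2=z_2^{1/k}$, $x_0=(z_2+z_0)^{1/k}-z_2^{1/k}$.
\item[(b)] Fields from distinct factors have vanishing nonnegative products in $A$. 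This amounts to the regularity of $Y_W(a,x_1)Y_W(b,x_2)$ at $x_1=\eta^m x_2$ for $m\not\equiv0 \pmod k$.
\item[(c)] The map is compatible with $\sigma$; this is your monodromy condition.
\end{enumerate}
The universal property of the tensor product of vertex algebras then gives a homomorphism $V^{\otimes k}\to A$. The operator of $u_1\otimes\cdots\otimes u_k$ is the iterated twisted $(-1)$-product, with the corrections included. The rest of your plan (the conjugation identity, the grading, the $L^\sigma(0)$ formula, and the inverse via $\Delta_k(z^k)^{-1}$) is sound as a sketch.
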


The construction above can also be generalized to VOSA with $\frac{1}{2}\Z$-grading \cite{B,BW,MS}. 

Let  $V=V_{\overline{0}}\oplus V_{\overline{1}}$ be a VOSA
such that $V_{\overline{0}} =\oplus_{n\in \Z} V_n$ and $V_{\overline{1}}= \oplus_{n\in \frac{1}2+\Z} V_n $.  
    For $\alpha=(\overline{\alpha_1}, \dots, \overline{\alpha_k})\in \mathbb Z_2^k$, define $$V^\alpha:=V_{\overline{\alpha_1}}\otimes \cdots \otimes V_{\overline{\alpha_k}}\subset V^{\otimes k}.$$ 
Then $$V^{\otimes k}=\oplus_{\alpha\in\mathbb Z_2^k}V^\alpha,$$ and we may view $V^\alpha$ as a subspace of $V^{\otimes k}$. 
Recall  that
$V^{\otimes k}$ is again a VOSA, with the vertex operator  determined by
\[
Y_{V^{\otimes k}} (u_1\otimes \cdots \otimes u_k,z ) (
v_1\otimes \cdots \otimes v_k)
= (-1)^{\langle \alpha, \beta\rangle} Y_V(u_1,z)v_1\otimes \cdots
\otimes Y_V(u_k,z)v_k,
\] 
for $u_1\otimes\cdots\otimes u_k\in V^\alpha$ 
and $v_1\otimes\cdots\otimes v_k\in V^\beta$, where  
\(
\langle \alpha,\beta\rangle 
:= \sum_{1\le j<i\le k} \alpha_i\,\beta_j  \in \mathbb{Z}_2.
\)
Define $\Delta_k(z)$ as in the VOA case, 
\[
\Delta_k(z)=\exp\left(\sum_{n\geq 1}a_nz^{-\frac{n}k}L(n)\right)k^{-L(0)}z^{(1/k-1)L(0)}.
\]
Then for homogeneous $u\in V_{\overline{0}}\cup V_{\overline{1}}$, one has  $$\Delta_k(z)u \in V[[z^{1/k}, z^{-1/k}]]z^{\frac{|u|(1-k)}{2k}},$$ where $|u|$ denotes the parity of $u$. 

As  a generalization of Theorem \ref{thm:3.1}, we have the following result. 

\begin{thm}[\cite{B,BW,MS}]\label{thmSVOSA}  Let $V$ be any VOSA with $\frac 1 2\Z$-grading. Let $\sigma= (1\  2\cdots k) \in\Aut(V^{\otimes k})$ and  $\theta$ be the canonical involution of $V$, acting as  $1$ on $V_{\overline{0}}$ and  as $-1$ on $V_{\overline{1}}$.  For $r=0, 1$, let $(W,Y_W)$ be a $\theta^r$-twisted $V$-module. Then $(T_{\sigma}(W), Y_{T_{\sigma}(W )} )$
is a $\theta_1^{r+k-1}\sigma$-twisted $V^{\otimes k}$-module, where
$T_{\sigma}(W)=W$ as a vector space and the vertex operator map $Y_{T_{\sigma}(W)}(\cdot, z)$ is uniquely determined by
$$Y_{T_{\sigma}(W)}(u^1, z)=Y_W\left(\Delta_k(z)u, z^{1/k}\right),\quad  u \in V,$$ with
$u^1=u\otimes 1^{\otimes(k-1)}\in V^{\otimes k}$ and $\theta_1=\theta\otimes 1\otimes \cdots\otimes 1\in \Aut(V^{\otimes k}).$ Furthermore, every  $\theta_1^{r+k-1}\sigma$-twisted $V^{\otimes k}$-module is isomorphic to one
of this form.
\end{thm}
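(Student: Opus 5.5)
The plan is to follow the strategy of \cite{BDM} for Theorem \ref{thm:3.1}, keeping track of parity signs. The proof splits into two halves: existence (the formula defines a $\theta_1^{r+k-1}\sigma$-twisted module) and completeness (every such module arises this way).

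\textbf{Existence.} First I would extend the formula from the first factor to all of $V^{\otimes k}$. Put $g=\theta_1^{r+k-1}\sigma$ and write $u^j=\sigma^{j-1}u^1$ for $u$ placed in the $j$-th slot. Since the twisted Jacobi identity forces $Y_g(gv,z)=\lim_{z^{1/k}\to \eta^{-1}z^{1/k}}Y_g(v,z)$ (up to the convention for $\eta$), I would define
\[
Y_{T_\sigma(W)}(u^{j},z)=Y_{T_\sigma(W)}(u^1,z)\big|_{z^{1/k}\mapsto \eta^{-(j-1)}z^{1/k}},
\]
with a sign $\pm1$ for odd $u$ coming from $\theta_1$. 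On a general element I would set $Y(u_1\otimes\cdots\otimes u_k,z)$ to be the normally ordered product (the $z_1\to z_2$ limit of iterated operators), inserting the Koszul sign $(-1)^{\langle\alpha,\beta\rangle}$. Next comes a parity count. For odd $u$, $\Delta_k(z)u\in V[[z^{\pm1/k}]]z^{(1-k)/(2k)}$, and $Y_W(\cdot,z^{1/k})$ applied to an odd vector of a $\theta^r$-twisted module carries powers $z^{r/(2k)+\frac1k\mathbb Z}$. So the total monodromy of $Y(u^1,z)$ around $z=0$ is $(-1)^{r+k-1}$ times the $\sigma$-twist. This is exactly why the twisting automorphism is $\theta_1^{r+k-1}\sigma$, and it is the check that makes the statement consistent.

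The Jacobi identity would then be verified as in \cite{BDM}. Since $V^{\otimes k}$ is generated by the $u^j$, it suffices to establish locality and associativity for the generating fields $Y(u^j,z)$ together with the correct twisted monodromy. Associativity for pairs $u^1,v^1$ reduces to the Jacobi identity for $Y_W$ under the change of variable $z\mapsto z^{1/k}$, using the conjugation formula $\Delta_k(z_2)Y(u,z_0)\Delta_k(z_2)^{-1}=Y(\Delta_k(z_2+z_0)u,(z_2+z_0)^{1/k}-z_2^{1/k})$ from \cite{BDM}. That formula is purely Virasoro-theoretic, so it holds verbatim in the super setting. Super-commutativity of $u^i,v^j$ for $i\neq j$ follows from the fact that the branches $\eta^{-(i-1)}z_1^{1/k}$ and $\eta^{-(j-1)}z_2^{1/k}$ never collide; the sign $(-1)^{|u||v|}$ comes from super-locality of $Y_W$. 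Finally, a generating-fields lemma for twisted modules (the twisted analogue of Li's local system theorem) yields the full Jacobi identity.

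\textbf{Completeness.} Given a $g$-twisted module $(M,Y_M)$, I would define $Y_W(u,z)=Y_M(\Delta_k(z^k)^{-1}u^1,z^k)$. Because the monodromies cancel, this lands in integral powers for even $u$ and in $z^{r/2+\mathbb Z}$ for odd $u$. I would then check, by inverting the previous computation, that it makes $M$ a $\theta^r$-twisted $V$-module, and that the two constructions are mutually inverse. This is the same argument as in \cite{BDM}, with signs inserted.

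\textbf{Main obstacle.} I expect the hardest step to be the bookkeeping of signs in the Jacobi identity across different tensor slots: the Koszul sign $(-1)^{\langle\alpha,\beta\rangle}$, the $\theta_1$ factors, and the half-integral exponents $z^{(1-k)/(2k)}$. These all have to combine coherently into the single twist $\theta_1^{r+k-1}\sigma$. I would first settle the case $k=2$ explicitly, then induct using the consistency of $\sigma^j$-shifts.
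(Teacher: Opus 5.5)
Your outline is correct and follows the standard argument. The paper gives no proof of its own; it imports the theorem from \cite{B,BW,MS}. There it is established exactly as you propose, by adapting \cite{BDM}: define $Y(u^j,z)$ by the branch shift, check the twisted Jacobi identity on the generators via the $\Delta_k$-conjugation formula, and invert with $Y_M((\Delta_k(z^k)^{-1}u)^1,z^k)$. Your exponent count $(1-k+r)/(2k)$ for odd $u$ is what forces the twist $\theta_1^{r+k-1}\sigma$.

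Two points need tightening. When $r+k-1$ is odd the automorphism has order $2k$, so the branch shift must act on $z^{1/(2k)}$ rather than $z^{1/k}$. Also, for $i\neq j$ the fields $Y(u^i,z_1)$ and $Y(v^j,z_2)$ are only mutually local, not literally supercommuting: their commutator is supported on a non-principal branch of $z_1=z_2$. What you actually need is that $Y(u^i,z_0)v^j$ has no singular part.
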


\begin{rem}
In Theorem \ref{thmSVOSA}, the $\theta^r$-twisted $V$-module $W$ is assumed to be a ``parity-stable" twisted module. That means $W=W^{(0)}\oplus W^{(1)}$ is a $\Z_2$-graded vector space which is compatible with the $\Z_2$-grading of $V$. In particular, $u_n w \in W^{(i+j)}$ for any  $u\in V^{(i)}, w\in W^{(j)}$ and $n\in \Z$
\cite[Theorem 6.3]{BW}. 
\end{rem}

\subsection{Tensor products for twisted modules}

In this subsection, we recall the tensor category framework for studying tensor products of twisted modules. In particular, we review  the category $\Rep(V)$ and the induction and restriction functors relating $V^G$-modules and twisted $V$-modules.

Throughout this subsection, we assume that $V$ is a $C_2$-cofinite, rational and self-dual VOA of CFT type, and that $G$ is a finite solvable subgroup of $\Aut(V)$. Then the fixed point subalgebra $V^G$ is also  rational and $C_2$-cofinite \cite{CM,Mi}.

Denote by $\mathcal{C}_V$ the category of ordinary $V$-modules, and by $\mathcal{C}_{V^G}$ the category of ordinary $V^G$-modules. 
Both $\mathcal{C}_V$ and $\mathcal{C}_{V^G}$ are modular tensor categories \cite{H}. Furthermore, from \cite{KO}  and \cite{CKM}, the $V^G$-module $V$ is a commutative associative
algebra in $\mathcal{C}_{V^G}$. Recall that $V$ decomposes as a $ \C[G]\otimes  V^G$-module as $$V =\oplus_{\chi\in \irr(G)} M_\chi\otimes  V_\chi,$$ where $\irr(G)$ is the set of irreducible characters of $G$, $M_\chi$ is the irreducible $G$-module affording the character $\chi$, and $V_\chi$ is an irreducible
$V^G$-module \cite{DRX}.

\begin{defn} [\cite{KO,CKM}] 
Denote by $\Rep(V )$ the subcategory of $\mathcal{C}_{V^G}$ consisting of every $V^G$-module $W$ together with a $V^G$-intertwining operator $Y_W (\cdot , z)$ of type $\binom{W}{V W}$, satisfying: 

\begin{enumerate}[label=\textnormal{(\arabic*)}]
\item (Associativity) For any $u,v \in V, w\in W$ and $w' \in W'$, the formal series
$$\langle w', Y_W (u, z_1)Y_W (v, z_2)w\rangle, \quad \text{and}\quad \langle w',Y_W(Y(u,z_1-z_2)v,z_2)w\rangle$$
converge on the domains $|z_1|>|z_2|>0$ and $|z_2|>|z_1-z_2|>0$, respectively, to multivalued analytic functions which coincide on the intersection of their domains. 

\item (Unit) $Y_W(1, z)=Id_W$.
\end{enumerate}
\end{defn}
In \cite{KO} (see also \cite{CKM}), a categorical tensor product functor $\boxtimes_V$ in the category of $\Rep(V )$ has been discussed. It is associative,  and for any two $V^G$-modules
$M, N$ in $\Rep(V )$,  the object $M \boxtimes_V N $ is a quotient of $M\boxtimes_{V^G} N$. Moreover, $\Rep(V )$ is a fusion category. In particular, $\Rep(V )$ is a semisimple category with finitely many inequivalent simple objects. 

\begin{thm}[{\cite[Theorem 1.6]{KO}}]\label{KO}
Define  functors $F:\mathcal{C}_{V^G} \to \Rep(V)$ and $R:\Rep(V)\to \mathcal{C}_{V^G}$ by
$$F(M)= V\boxtimes_{V^G} M\quad \text{and}\quad R(W)= W|_{V^G}.$$ Then
\begin{enumerate}[label=\textnormal{(\arabic*)}]
\item  Both $F$ and $R$ are exact and injective on morphisms.
\item $F$ and $R$ are adjoint: one has canonical functorial isomorphisms
\[
\mathrm{Hom}_V(F(M),X) = \mathrm{Hom}_{\mathcal{C}_{V^G}} (M,R(X)),  \quad M\in \mathcal{C}_{V^G}, X\in \Rep(V).
\]

\item $F$ is a tensor functor: one has canonical isomorphisms 
\[
 F( M\boxtimes_{V^G} N)= F(M)\boxtimes_V F(N), \quad F(V^G)= V.
 \]
\item There are canonical isomorphisms:  
\[
R(F(M)) = V\boxtimes_{V^G} M, \quad \text{ and } \quad R (F(M)\boxtimes_V X) = M\boxtimes_{V^G}R(X).
\]
\end{enumerate}
\end{thm}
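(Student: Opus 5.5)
This is the Kirillov--Ostrik induction theorem, recalled from \cite{KO}. The plan is to construct everything from the commutative associative algebra structure of $V$ in the braided tensor category $\mathcal{C}_{V^G}$, which is available by \cite{KO,CKM}. This identifies $\Rep(V)$ with the category of left $V$-modules in $\mathcal{C}_{V^G}$. Under this identification:
\begin{itemize}
\item the associativity condition on $Y_W$ becomes the associativity of the action morphism $\mu_W\colon V\boxtimes_{V^G} W\to W$, obtained through the universal property of $\boxtimes_{V^G}$;
\item the unit condition on $Y_W$ becomes the unit axiom for $\mu_W$.
\end{itemize}
Commutativity of $V$ lets every left module be made into a $V$-bimodule via the braiding. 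The tensor product $M\boxtimes_V N$ is then defined as the coequalizer of the two maps $M\boxtimes V\boxtimes N\rightrightarrows M\boxtimes N$. This realizes $M\boxtimes_V N$ as a quotient of $M\boxtimes_{V^G}N$, as stated.

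With this in place, we define $F(M)=V\boxtimes_{V^G}M$, with $V$ acting on it via multiplication $m_V\boxtimes \mathrm{id}_M$ and the associativity isomorphism, and we let $R$ be the forgetful functor. The adjunction (2) follows the standard free-module argument:
\begin{itemize}
\item given $f\colon F(M)\to X$, restrict along the unit $\iota\boxtimes\mathrm{id}\colon M\to V\boxtimes M$;
\item given $g\colon M\to R(X)$, take $\mu_X\circ(\mathrm{id}_V\boxtimes g)$.
\end{itemize}
Checking that these two maps are mutually inverse uses only the unit and associativity axioms. The first formula in (4) holds by definition. For the second, note that $F(M)\boxtimes_V X\cong (V\boxtimes M)\boxtimes_V X\cong M\boxtimes X$: one commutes $M$ past $V$ with the braiding, and the coequalizer collapses the free factor $V$.

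For (3), we apply (4) with $X=F(N)$, which gives $F(M)\boxtimes_V F(N)\cong V\boxtimes M\boxtimes N=F(M\boxtimes N)$. We then check that this isomorphism respects the $V$-action and satisfies the monoidal coherence conditions; this again uses commutativity and the hexagon axioms. The identity $F(V^G)=V$ holds because $V^G$ is the unit object.

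For (1), $R$ is clearly exact and faithful. Exactness of $F$ comes from semisimplicity of $\mathcal{C}_{V^G}$ (by rationality, \cite{CM,Mi}), or alternatively from exactness of $V\boxtimes -$ in a rigid category. Injectivity of $F$ on morphisms follows because $M\to RF(M)$ is split injective via the unit $\iota$ and the counit $\epsilon\colon V\to V^G$; the counit exists by semisimplicity and $V^G$ appearing in $V$ with multiplicity one.

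The main obstacle is the first step: showing analytically that an intertwining operator $Y_W$ satisfying the convergence and associativity condition is precisely the same thing as a categorical module action. This requires Huang--Lepowsky's construction of the associativity isomorphisms in $\mathcal{C}_{V^G}$, and I would cite \cite{H,CKM} for it. The remaining steps are formal category theory.
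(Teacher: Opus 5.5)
Your proposal is correct and follows the same route as the paper. The paper does not prove this statement itself: it records (via \cite{KO,CKM}) that $V$ is a commutative associative algebra in $\mathcal{C}_{V^G}$ whose category of modules is $\Rep(V)$, and then imports \cite[Theorem 1.6]{KO}. Your sketch is exactly the formal categorical argument behind that theorem, with the same dictionary step (analytic associativity $\leftrightarrow$ categorical module action) correctly identified as the only non-formal input.

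One point in your argument for (1) is superfluous: the multiplicity-one remark. A splitting $\epsilon$ of the unit $\iota\colon V^G\to V$ exists simply because $\iota$ is a nonzero map out of a simple object, hence a monomorphism, and monomorphisms split in a semisimple category.
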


The following lemmas can be found in \cite{DLXY}(see also \cite{K1, K2}). 
\begin{lemma}
Let $g\in G$ and $W$ be a $g$-twisted $V$-module. Then $W$ is an object of $\Rep(V )$. Furthermore, if $W_i$ is a $g_i$-twisted $V$-module with $g_i\in G$ for $i = 1, 2$, then $W_1$ and $W_2$ are isomorphic as  objects in $\Rep(V )$ if and only if
$g_1=g_2$ and $W_1=W_2$ as $g_1$-twisted $V$-modules.
\end{lemma}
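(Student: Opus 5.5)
Two things have to be shown. First, a $g$-twisted $V$-module $W$ is an object of $\Rep(V)$. Second, the isomorphism type of $W$ in $\Rep(V)$ remembers both $g$ and the twisted module structure.

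\textbf{Membership in $\Rep(V)$.} Since $g$ fixes $V^G$ pointwise, $W$ restricts to an untwisted $V^G$-module. It is ordinary because $W$ is an ordinary $g$-twisted module and $V^G$ is $C_2$-cofinite and rational \cite{CM,Mi}.

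For $Y_W(\cdot,z)$ we keep the twisted vertex operator $Y_g(\cdot,z)$, now read as a map $V\otimes W\to W\{z\}$. Decompose $V=\oplus_{j}V^j$ into $g$-eigenspaces. For $v\in V^j$, $Y_g(v,z)$ involves only powers $z^{-n-1}$ with $n\in \frac{j}{T}+\Z$. Each $V^j$ is a $V^G$-submodule, and the twisted Jacobi identity, restricted to $u\in V^G$, gives the $V^G$-intertwining Jacobi identity. So $Y_g$ is a $V^G$-intertwining operator of type $\binom{W}{V\,W}$. The unit axiom is the vacuum axiom of twisted modules.

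For associativity, take $\langle w',Y_g(u,z_1)Y_g(v,z_2)w\rangle$ with $u\in V^{i}$, $v\in V^{j}$. The twisted Jacobi identity gives the standard rationality/duality for twisted modules \cite{DLM2}: this matrix coefficient equals $z_1^{-i/T}z_2^{-j/T}f(z_1,z_2)$, where $f$ is rational with poles only at $z_1=0$, $z_2=0$ and $z_1=z_2$. The iterate $\langle w',Y_g(Y(u,z_0)v,z_2)w\rangle$ is the expansion of the same function in $|z_2|>|z_0|$, with $z_1=z_2+z_0$ and the branch of $z_1^{-i/T}=(z_2+z_0)^{-i/T}$ fixed by the binomial expansion. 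Convergence and coincidence on the common domain then follow, which is exactly the associativity axiom in the definition of $\Rep(V)$. This step is the main technical point: the multivalued branches must be matched consistently. I would take it from the analytic treatment in \cite{K1,K2,DLXY} rather than redo it.

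\textbf{Isomorphism criterion.} The ``if'' direction is clear. For ``only if'', let $\phi:W_1\to W_2$ be an isomorphism in $\Rep(V)$, so $\phi Y_{W_1}(v,z)=Y_{W_2}(v,z)\phi$ for all $v\in V$. Take $v\in V^j$. The left side contains only powers in $z^{-j/T_1}+\Z$ (with $T_1=|g_1|$); the right side contains only powers from the monodromy of $g_2$ on $v$.

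1. Since $V$ is simple, $Y_{W_2}(v,z)\neq 0$ for $v\neq 0$. Hence for every $v\in V$ the monodromy of $Y_{W_1}(v,z)$ under $z\mapsto e^{2\pi i}z$ agrees with that of $Y_{W_2}(v,z)$.
2. That monodromy is $g_1^{-1}$ on $W_1$ and $g_2^{-1}$ on $W_2$ (up to a fixed sign convention), because $Y_g(gv,z)=Y_g(v,e^{2\pi i}z)$.
3. Therefore $g_1$ and $g_2$ have the same eigenvalue on every vector of $V$, so $g_1=g_2$ as automorphisms of $V$.
4. $\phi$ then intertwines the full twisted vertex operators, so it is an isomorphism of $g_1$-twisted $V$-modules.
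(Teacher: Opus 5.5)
Your proof is correct. Note that the paper gives no proof of this lemma: it quotes it from \cite{DLXY} (see also \cite{K1,K2}), and your argument is the standard one behind that citation.

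For $u\in V^G$ the twisted Jacobi identity collapses, via $\frac{1}{T}\sum_{j}\delta(\eta^{j}x)=\delta(x^{T})$, to the Jacobi identity of a $V^G$-intertwining operator of type $\binom{W}{V\,W}$. Twisted rationality then gives the associativity axiom, and the exponents of $z$ in $Y_W(v,z)$ recover $g$. The branch matching you defer is routine. For $|z_0|<|z_2|$ the binomial expansion of $(z_2+z_0)^{\lambda}$, $\lambda\in\frac{1}{T}\Z$, equals $z_2^{\lambda}\exp\bigl(\lambda\log(1+z_0/z_2)\bigr)$ with the principal logarithm. This agrees with the principal value of $z_1^{\lambda}$ for real $z_1>z_2>z_1-z_2>0$, so product and iterate are the same branch near such points.

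The categorical route of \cite{K1,K2} expresses the same mechanism differently. There one has $\Rep(V)=\bigoplus_{g\in G}\Rep^{g}(V)$, where $W\in\Rep^{g}(V)$ means
\[
\mu_W\circ c_{W,V}\circ c_{V,W}=\mu_W\circ(g\boxtimes 1_W),
\]
up to replacing $g$ by $g^{-1}$ depending on conventions. Naturality of the braiding transports this condition along isomorphisms in $\Rep(V)$. Disjointness of the summands then gives the `only if' part with no power counting: the annihilator of $\mu_W$ is an ideal of the simple algebra $V$, so no nonzero $W$ satisfies the condition for two different $g$. Your monodromy $z\mapsto e^{2\pi i}z$ is exactly the analytic form of this double braiding, and simplicity of $V$ plays the role of disjointness. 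The categorical version is cleaner to state, while yours is explicit and elementary.

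Two points of precision.

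First, in steps 1--3 the `hence' in step 1 and the phrase ``same eigenvalue on every vector'' are imprecise. The clean chain is
\[
Y_{W_2}(g_1^{-1}v,z)=\phi\,Y_{W_1}(v,e^{2\pi i}z)\,\phi^{-1}=Y_{W_2}(g_2^{-1}v,z)
\]
for all $v\in V$. Injectivity of $Y_{W_2}$ then gives $g_1=g_2$, since its annihilator is an ideal of $V$ not containing $\mathbf{1}$.

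Second, step 2 requires one uniform choice of root of unity, e.g.\ $\eta=e^{2\pi i/T}$ for every $g$. Section~2 literally allows an arbitrary primitive $\eta$. In that case, reindexing $j\mapsto -j$ in the twisted Jacobi identity shows that a $g$-twisted module for $\eta$ is the same thing as a $g^{-1}$-twisted module for $\eta^{-1}$. The `only if' statement would then fail whenever $g\neq g^{-1}$.
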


\begin{lemma}
If $W$ is a simple object of $\Rep(V )$, then $W$ is an irreducible $g$-twisted $V$-module for some $g\in G$.
\end{lemma}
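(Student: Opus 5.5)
The statement to prove is the last lemma: a simple object $W$ of $\Rep(V)$ is an irreducible $g$-twisted $V$-module for some $g\in G$. I would argue in three steps: first let $G$ act on the object $W$, then pass to a $\langle g\rangle$-eigenspace, then show that eigenspace is the whole of $W$.

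\emph{Step 1: an action of $G$ on $W$.} The structure map $Y_W(\cdot,z)$ is a $V^G$-intertwining operator of type $\binom{W}{V\,W}$. For each $h\in G$, the map $Y_W(h\,\cdot,z)$ is again such an intertwining operator, and it satisfies the associativity and unit axioms because $h$ is an automorphism of $V$ fixing $V^G$. This gives an object $W\circ h$ of $\Rep(V)$, which is again simple. Since $\Rep(V)$ is a fusion category it has finitely many simples. I would then use the braiding and monodromy of $\mathcal{C}_{V^G}$ as in \cite{KO,CKM}, viewing $\Rep(V)$ as a $G$-crossed structure. The aim is to show that the monodromy operator $\mathcal{M}$, obtained by continuing $Y_W(u,z)$ around $z=0$, acts on $Y_W$ as $Y_W(\mathcal{M}u,z)$ for an automorphism $\mathcal{M}$ of $V$ fixing $V^G$ pointwise.

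\emph{Step 2: identify the monodromy with some $g\in G$ and pass to an eigenspace.} By Galois correspondence for VOAs (Dong--Mason, or Hanaki--Miyamoto--Tambara), the automorphisms of $V$ fixing $V^G$ pointwise are exactly the elements of $G$. So $\mathcal{M}=g$ for some $g\in G$. Concretely, for $u$ in the $\eta^j$-eigenspace of $g$, the series $Y_W(u,z)$ involves only powers $z^{n}$ with $n\in j/T+\mathbb Z$ on each $L(0)$-eigenspace of $W$, after splitting $W$ by $L(0)$ eigenvalues modulo $\mathbb Z$. Formal associativity with this monodromy is exactly the twisted Jacobi identity. This shows that $W$ is a weak $g$-twisted module, except that $g$ might depend on the summand. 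So I would decompose $W=\bigoplus_{g} W_g$, where $W_g$ is the subspace on which the monodromy of $Y_W$ equals $g$. Each $W_g$ is a subobject in $\Rep(V)$.

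\emph{Step 3: conclude by simplicity.} Simplicity forces $W=W_g$ for a single $g$. Any $g$-twisted $V$-submodule is a subobject of $\Rep(V)$, so $W$ is an irreducible $g$-twisted $V$-module. Ordinariness follows because $W$ is an ordinary $V^G$-module.

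The main obstacle is Step 2: the monodromy must be a single automorphism acting on the whole of $V$, not merely an operator on the space of intertwining operators. I would handle it via the decomposition $V=\bigoplus_\chi M_\chi\otimes V_\chi$ together with \cite{DRX}. Each $V_\chi$ is a simple $V^G$-module, and the monodromy acts on $\mathrm{Hom}(V_\chi\boxtimes W,W)$ by scalars given by ratios of twists. These scalars assemble into an endomorphism of $V$ that respects the product, by associativity, and hence is an automorphism fixing $V^G$. This is the step where I expect to rely on the braided structure of $\mathcal{C}_{V^G}$ from \cite{H}.
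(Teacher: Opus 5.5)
\textbf{There is a genuine gap, in your Step 2, and it is the whole content of the lemma.}

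The monodromy is not an operator on $W$. It replaces the structure map by $\widetilde Y_W(u,z)=Y_W(u,e^{2\pi i}z)$. Because $V$ is $\Z$-graded, this equals $e^{2\pi iL(0)}Y_W(u,z)e^{-2\pi iL(0)}$, which is just another $\Rep(V)$-structure on the same $V^G$-module. So ``the subspace $W_g$ on which the monodromy equals $g$'' has no a priori meaning. The claim that $W=\bigoplus_g W_g$ with each $W_g$ a subobject is exactly Kirillov's decomposition $\Rep(V)=\bigoplus_{g\in G}\Rep^g(V)$, i.e.\ the statement you are trying to prove. Appealing to a $G$-crossed structure in Step 1 is circular for the same reason. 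Once that decomposition is granted, Step 3 is trivial, so the proposal reduces the lemma to itself.

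Your sketch for getting around this obstacle also fails. Let $W=\bigoplus_i W_i$ be the $V^G$-isotypic decomposition, with conformal weights $h_i$. The monodromy acts on $I\binom{W}{V_\chi\ W}$ by $e^{2\pi i(h_j-h_i)}$ on the $W_i\to W_j$ blocks. These factors genuinely differ: already for an irreducible $g$-twisted module with $g\neq 1$, the $L(0)$-spectrum meets several classes mod $\Z$. So the monodromy is not a scalar on $\mathrm{Hom}(V_\chi\boxtimes W,W)$. What must be shown is that this block rescaling preserves the $\dim M_\chi$-dimensional space $\{Y_W(m\otimes\cdot\,,z)\mid m\in M_\chi\}$, and acts on it through one and the same $g\in G$ for every $\chi$.

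Associativity alone cannot give this. Take $W=N_1\oplus N_2$ with $N_i$ an irreducible $g_i$-twisted module and $g_1\neq g_2$. Then $Y_W$ is associative, but the monodromy acts through $g_1$ on $N_1$ and through $g_2$ on $N_2$. So it is not of the form $Y_W(\phi\,\cdot\,,z)$ for any linear map $\phi$ on $V$. Simplicity of $W$ must be used essentially at exactly this point, and your argument never uses it there.

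\textbf{How to close the gap.} The paper itself gives no proof; it cites \cite{DLXY} and Kirillov \cite{K1,K2}. You have two options.

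\emph{Option 1: cite Kirillov.} Invoke his theorem $\Rep A=\bigoplus_g\Rep^g A$ for the rigid $G$-algebra $A=V$ in $\mathcal C_{V^G}$, where $\mathrm{End}_{V^G}(V)\cong\C[G]$. Your identification of $\Rep^g(V)$ with $g$-twisted modules via the twisted Jacobi identity then finishes the proof.

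\emph{Option 2: count dimensions.}
\begin{enumerate}
\item By the adjunction in Theorem \ref{KO}, a simple $W$ is a summand of $F(M)=V\boxtimes_{V^G}M$ for any irreducible $V^G$-submodule $M\subseteq W$.
\item By \cite{DRX}, $M\cong N_\lambda$ for some irreducible $g$-twisted module $N=\bigoplus_\mu P_\mu\otimes N_\mu$.
\item The pairwise non-isomorphic objects $N\circ h$, $h\in G/G_N$, occur in $F(M)$ with multiplicity $\dim P_\lambda$, by adjunction.
\item The formulas $\mathrm{qdim}_{V^G}N_\lambda=[G:G_N]\dim P_\lambda\,\mathrm{qdim}_VN$ and $\mathrm{qdim}_{V^G}N=|G|\,\mathrm{qdim}_VN$ show that these already account for $\mathrm{qdim}_{V^G}F(M)=|G|\,\mathrm{qdim}_{V^G}M$.
\item Positivity of quantum dimensions then leaves no room for any other simple summand.
\end{enumerate}
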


\subsection{Fusion rules}

The fusion rules between  untwisted $V^{\otimes k}$-modules and  $\sigma$-twisted modules of $V^{\otimes k}$ are studied in \cite{DLXY}. In this subsection, we recall the key correspondence between intertwining operators.

\begin{prop} Let $M, N, W$ be $V$-modules. 
	For any intertwining operator $\mathcal{Y}(\cdot,z)$ of type $\binom{W}{M\q N}$, there exists an intertwining operator $\overline{\mathcal{Y}}(\cdot,z)$ of
	type  $\binom{T_{\sigma}(W)}{M^1\q  T_{\sigma}(N)}$,  uniquely determined by
	\[
	\overline{\mathcal{Y}}(w^1,z)=\mathcal{Y}(\Delta_k(z) w, z^{1/k}), \quad w\in M.
	\]
    where $M^1=M\otimes V^{\otimes(k-1)}$ and $w^1=w\otimes\textbf{1}^{(k-1)}\in M^1$. 
\end{prop}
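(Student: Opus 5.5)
We follow the strategy of \cite{BDM} for Theorem \ref{thm:3.1}: first show $\overline{\mathcal{Y}}$ is a well-defined intertwining operator on the generating subspace $M^1$, then extend to all of $M\otimes V^{\otimes(k-1)}$ by the action of $V^{\otimes k}$. Concretely, define $\overline{\mathcal{Y}}(w^1,z)=\mathcal{Y}(\Delta_k(z)w,z^{1/k})$ for $w\in M$. Since $M^1=M\otimes V^{\otimes(k-1)}$ is generated as a $V^{\otimes k}$-module from the vectors $w^1$ by the action of $V^{\otimes k}$, any intertwining operator of the given type is determined by its values on the $w^1$. Uniqueness then follows from the Jacobi identity: $\overline{\mathcal{Y}}(u^j_n w^1,z)$ is expressed through products of $Y_{T_\sigma}(u^j,z_1)$ and $\overline{\mathcal{Y}}(w^1,z_2)$. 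Here $u^j=\sigma^{-(j-1)}u^1$, and its twisted vertex operator is obtained from that of $u^1$ by the monodromy $z^{1/k}\mapsto \eta^{-(j-1)}z^{1/k}$, as in \cite{BDM}.

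For existence we define $\overline{\mathcal{Y}}$ on all of $M^1$ by
\[
\overline{\mathcal{Y}}\bigl(w\otimes v_2\otimes\cdots\otimes v_k,z\bigr)
\]
via the iterate formula, using $Y_{T_\sigma(N)}(v^j,z_1)$ and $\overline{\mathcal{Y}}(w^1,z_2)$. We then check that this is well defined and satisfies the twisted intertwining Jacobi identity. The cleanest route mirrors \cite{BDM}: work in $V^{\otimes k}$ viewed through the change of variable $x\mapsto (1+x)^k$, under which the $k$ factors correspond to the $k$ branches $\eta^{j}z^{1/k}$.

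The key steps, in order, are the following.

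\begin{enumerate}[label=\textnormal{(\arabic*)}]
\item The $L(-1)$-derivative property for $\overline{\mathcal{Y}}(w^1,z)$. This follows from the conjugation identities for $\Delta_k(z)$ established in \cite{BDM} (e.g.\ $\frac{d}{dz}\Delta_k(z)$ and $\Delta_k(z)L(-1)$ relations), applied to $\mathcal{Y}$ in place of $Y_W$. Those identities use only the Virasoro relations and the $L(-1)$-property, which $\mathcal{Y}$ also satisfies.
\item The twisted Jacobi identity between $Y_{T_\sigma}(u^1,z_1)$ and $\overline{\mathcal{Y}}(w^1,z_2)$. This is obtained from the ordinary Jacobi identity of $\mathcal{Y}$ after substituting $z_i\mapsto z_i^{1/k}$ and conjugating by $\Delta_k$. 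The key input is the identity from \cite{BDM}
\[
\Delta_k(z_2)Y(u,z_0)\Delta_k(z_2)^{-1}=Y\bigl(\Delta_k(z_2+z_0)u,(z_2+z_0)^{1/k}-z_2^{1/k}\bigr),
\]
which is purely an operator identity on modules for the Virasoro algebra and so transfers verbatim to intertwining operators.
\item The commutativity with $u^j$ for $j\ge2$, which produces the components $v_2,\dots,v_k$ of $M^1$. This uses the delta-function decomposition $\delta\bigl(z_1^{1/k}/z_2^{1/k}\bigr)$ into $k$ branches, exactly as in the proof of Theorem \ref{thm:3.1}.
\end{enumerate}

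The main obstacle is step (2): the consistency of the extension when both twisted and intertwining monodromies are present. The branch $z^{1/k}$ of $\mathcal{Y}$ must be compatible with the $\frac1k\mathbb Z$-powers of the $\sigma$-twisted module. We expect to handle this by noting that $T_\sigma(N)$ has $L(0)$-grading divided by $k$, so that the non-integral powers of $\mathcal{Y}$ (governed by the conformal weights of $M,N,W$) combine correctly with $z^{(1/k-1)L(0)}$ in $\Delta_k$. Alternatively, one could prove the result by reduction: view $M\oplus N\oplus W$ with $\mathcal{Y}$ as part of a generalized module or vertex-algebraic structure, and apply the argument of Theorem \ref{thm:3.1} there.
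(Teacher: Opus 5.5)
The paper does not prove this proposition; it quotes it from \cite{DLXY}, whose argument follows the template of \cite{BDM}. Measured against that, your uniqueness argument and step (1) are fine. The existence part, however, has a genuine gap exactly where the content lies: passing from the formula on the generators $w^1$ to an intertwining operator on all of $M^1=M\otimes V^{\otimes(k-1)}$. You only assert that the iterate-defined extension ``is well defined and satisfies the twisted intertwining Jacobi identity''.

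Your step (2) also cannot be done as described. Since $u^1$ is not a $\sigma$-eigenvector, the identity you would need is
\begin{multline*}
z_0^{-1}\delta\Bigl(\frac{z_1-z_2}{z_0}\Bigr)Y_{T_\sigma(W)}(u^1,z_1)\overline{\mathcal{Y}}(w^1,z_2)-z_0^{-1}\delta\Bigl(\frac{z_2-z_1}{-z_0}\Bigr)\overline{\mathcal{Y}}(w^1,z_2)Y_{T_\sigma(N)}(u^1,z_1)\\
=\frac{z_2^{-1}}{k}\sum_{j=0}^{k-1}\delta\Bigl(\eta^j\frac{(z_1-z_0)^{1/k}}{z_2^{1/k}}\Bigr)\overline{\mathcal{Y}}\bigl(Y(\sigma^ju^1,z_0)w^1,z_2\bigr).
\end{multline*}
For $j\neq 0$ the vector $Y(\sigma^ju^1,z_0)w^1$ equals $w\otimes\cdots\otimes e^{z_0L(-1)}u\otimes\cdots$, with $u$ in a slot other than the first. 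This is not of the form $x^1$.

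Substituting $x_i=z_i^{1/k}$ into the Jacobi identity for $\mathcal{Y}$ only controls the branch $j=0$. (You first rewrite $\overline{\mathcal{Y}}((Y_M(u,z_0)w)^1,z_2)$ by the $\Delta_k$-conjugation formula on the module $M$; that, not a version for $\mathcal{Y}$, is what is actually used.) The $j=0$ branch gives weak commutativity, via $z_1-z_2=\prod_j(z_1^{1/k}-\eta^jz_2^{1/k})$, and the singular part, i.e.\ the coefficients of $z_0^{-n-1}$ with $n\ge 0$. The $j\neq0$ terms are statements about $\overline{\mathcal{Y}}$ on the vectors $w\otimes\cdots\otimes L(-1)^mu\otimes\cdots$. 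That is precisely what the extension must produce consistently, and your step (3) names it without giving an argument.

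What is missing is an extension theorem for twisted intertwining operators. It must say that data on a generating subspace of the untwisted module $M^1$, satisfying the $L(-1)$-property, weak commutativity with all $Y(u^j,z)$ and the correct iterate relations, extends to an intertwining operator of type $\binom{T_\sigma(W)}{M^1\ T_\sigma(N)}$. In \cite{BDM} the corresponding step is a generating (local-system) theorem for twisted vertex operators on a single space with exponents in $\frac1k\mathbb{Z}$. It does not apply verbatim to $\overline{\mathcal{Y}}(w^1,z)\in\mathrm{Hom}(T_\sigma(N),T_\sigma(W))\{z\}$, which goes between different modules and, for irreducible $M,N,W$, has exponents in $-h_M-(h_N-h_W)/k+\frac1k\mathbb{Z}$.

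One way to close the gap would be the following.
\begin{enumerate}[label=\textnormal{(\roman*)}]
\item Place the operators $\overline{\mathcal{Y}}(w^1,z)$ in a weak $V^{\otimes k}$-module of intertwining-type operators from $T_\sigma(N)$ to $T_\sigma(W)$, a twisted analogue of Li's Hom functor in which intertwining operators correspond to module maps out of $M^1$.
\item Prove that $w\mapsto\overline{\mathcal{Y}}(w^1,z)$ is a module map for the first tensor factor. This is the analogue of the \cite{BDM} computation and involves all branches.
\item Show that its image is vacuum-like for the remaining $k-1$ factors, and conclude that the map extends to $M\otimes V^{\otimes(k-1)}$.
\end{enumerate}
Your closing remark about a ``generalized module'' points in this direction but is not carried out.

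Finally, the ``main obstacle'' you single out is not one. Intertwining operators admit arbitrary complex exponents, and the exponent count above shows that $z^{(1/k-1)L(0)}$ together with $z\mapsto z^{1/k}$ automatically matches the grading $\frac1kL(0)+\frac{c(k^2-1)}{24k}$ of $T_\sigma$.
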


\begin{thm}
	Let $M,N,W$ be $V$-modules and let $\sigma= (1 2\cdots k) \in\Aut(V^{\otimes k})$. 
	Set $M^1 = M\otimes V^{\otimes (k-1)}$. Given  intertwining operator $\overline{\mathcal{Y}}(\cdot,z)$ of type $\binom{T_{\sigma}(W)}{M^1\q  T_{\sigma}(N)}$, define a linear map $\mathcal{Y}(\cdot,z) $ by  
	\[
	\mathcal{Y}(w,z) = \overline{\mathcal{Y}}\left( (\Delta_k(z^k)^{-1} w)^1, z^k\right), w\in M.
	\]
	Then $\mathcal{Y}(\cdot,z)$ is an intertwining operator of type $\binom{W}{M\q N}$.
\end{thm}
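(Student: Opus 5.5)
The plan is to check the Jacobi identity for intertwining operators directly for $\mathcal{Y}$, after first confirming that it is well defined. Since $\Delta_k(z)$ is invertible as an operator series, $\Delta_k(z^k)^{-1}$ makes sense. The substitution $z\mapsto z^k$ removes the fractional powers $z^{1/k}$, so $\Delta_k(z^k)^{-1}w$ is a finite sum of terms $w_i\,z^{m_i}$ with $m_i\in\mathbb Z$, multiplied by the factor $k^{L(0)}z^{(k-1)L(0)}$. This factor acts by scalars on homogeneous vectors. Hence $\mathcal{Y}(w,z)$ is a well-defined formal series. Lower truncation for $\mathcal{Y}$ follows from lower truncation for $\overline{\mathcal{Y}}$. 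The $L(-1)$-derivative property follows from $\overline{\mathcal{Y}}(L(-1)^1 w^1,z)=\frac{d}{dz}\overline{\mathcal{Y}}(w^1,z)$ together with the conjugation formula for $\Delta_k$ under $L(-1)$ from \cite{BDM}; it is the same identity that shows $\overline{\mathcal{Y}}$ in the preceding Proposition is an intertwining operator.

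The key step is the Jacobi identity in the form of commutator and associator formulas with respect to $V$. First, I would define the candidate $V$-action on $N$ and $W$ by inverting Theorem \ref{thm:3.1}: $Y_N(u,z)=Y_{T_\sigma(N)}((\Delta_k(z^k)^{-1}u)^1,z^k)$, and similarly for $W$. By the uniqueness in Theorem \ref{thm:3.1}, these recover the original module structures. Next, I would take the twisted Jacobi identity relating $Y_{T_\sigma(W)}(u^1,z_1)$, $\overline{\mathcal{Y}}(w^1,z_2)$ and $Y_{T_\sigma(N)}(u^1,z_1)$, with $u^1$ in the first tensor factor only. Since $M^1$ is an untwisted module, the intertwining Jacobi identity for $\overline{\mathcal{Y}}$ has $u^1$ acting with the $\frac1k$-fractional delta function coming from $\sigma$. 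After substituting $z_1\mapsto z_1^k$ and $z_2\mapsto z_2^k$, the sum over $j\in\mathbb Z/k\mathbb Z$ of $\delta\left(\eta^j\frac{(z_1^k-z_0)^{1/k}}{z_2}\right)$ must be rewritten. The idea is to follow the argument of \cite{BDM}, where the same computation shows that the inverse construction of Theorem \ref{thm:3.1} gives a $V$-module. That argument isolates the $j=0$ branch via the change of variables $z_0\mapsto z_1^k-z_2^k$ and the identity $\Delta_k(z_1)$-conjugation $Y(\Delta_k(z_1^k)^{-1}\cdot)$. The remaining branches $j\neq 0$ correspond to $\sigma^j u^1$, which lies in a different tensor factor. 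That factor acts on $w^1$ through the vacuum components, and the associated terms contribute nothing to the residue that gives the $V$-associator.

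I expect this residue/branch bookkeeping to be the main obstacle: translating the twisted Jacobi identity in the variables $z_i^k$ into the untwisted Jacobi identity for $\mathcal{Y}$. To limit the work, I would reduce to checking only the commutator formula $[u_m,\mathcal{Y}(w,z)]$ and the associativity (weak associativity) formula for $\mathcal{Y}$, which together imply the Jacobi identity. The commutator formula follows by taking $\mathrm{Res}_{z_0}$ of the transformed identity, where only the leading branch survives. Associativity follows from the corresponding associativity of $\overline{\mathcal{Y}}$ after composing with the change of variable $x=z^{1/k}$, using the identity $\Delta_k(z)$ satisfies under $Y(\cdot,x_0)$ from \cite[Proposition]{BDM}. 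Finally, I would check that $\overline{\mathcal{Y}}\mapsto\mathcal{Y}$ is inverse to the map of the preceding Proposition; this is immediate by substituting the definitions. It confirms that the construction is consistent and gives the identification of fusion rules.
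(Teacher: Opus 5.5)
The paper does not prove this statement itself: it is quoted from \cite{DLXY}. Your plan is the standard argument for it, so your outline is sound. That plan is to invert the \cite{BDM} construction, get the $L(-1)$-property from the derivative identity for $\Delta_k$, and take the commutator formula from the $j=0$ branch of the twisted Jacobi identity. Associativity then comes from the twisted one via $z_i=x_i^k$, $z_0=(x_2+x_0)^k-x_2^k$ and the $\Delta_k$-conjugation identity.

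The one point to state more carefully is the role of the branches $j\neq 0$. They do not literally drop out of $\mathrm{Res}_{z_1}$ of the twisted Jacobi identity. What matters is that the regularity of $Y(\sigma^j u^1,z_0)w^1$ in $z_0$ gives $(z_1^{1/k}-z_2^{1/k})^N\,[Y(u^1,z_1),\overline{\mathcal{Y}}(w^1,z_2)]=0$ for large $N$, i.e.\ no singularities at $z_1^{1/k}=\eta^{j}z_2^{1/k}$ for $j\neq 0$. That fact, not a literal substitution $z_0\mapsto z_1^k-z_2^k$ inside delta functions (which is not a legal formal operation), is what turns the substituted twisted identities into the untwisted Jacobi identity for $\mathcal{Y}$.
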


In particular, the assignment
$$	\pi: I_V \binom{W}{M\q N} \to I_{V^{\otimes k}} \binom{T_{\sigma}(W)}{M^1\q  T_{\sigma}(N)}, \quad 
	\mathcal{Y} (\cdot, z) \mapsto \overline{\mathcal{Y}}(\cdot,z),$$
is a linear isomorphism.

\begin{thm}
	Let $M$ and $N$ be $V$-modules. Suppose that the tensor product $(M\otimes_V N, \mathcal{Y})$ exists. 
	Then the tensor product $\left((M\otimes V^{\otimes (k-1)})\boxtimes_{V^{\otimes k}}T_\sigma(N), \overline{\mathcal Y}\right)$ exists and
	$$(M\otimes V^{\otimes (k-1)}) \boxtimes T_{\sigma}(N) \cong T_{\sigma} (M\boxtimes_V N).$$ In particular, 
	$\left(M\otimes V^{\otimes (k-1)}\right) \boxtimes T_{\sigma}(V) \cong T_{\sigma} (M)$.
\end{thm}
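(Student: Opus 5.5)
The plan is to build the isomorphism from the intertwining-operator correspondence $\pi$ stated just before. The tensor product will be characterized by its universal property, and the candidate will be $T_\sigma(M\boxtimes_V N)$ with the operator $\overline{\mathcal Y}=\pi(\mathcal Y)$, where $(M\boxtimes_V N,\mathcal Y)$ is the given tensor product of $V$-modules.

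First, $\overline{\mathcal Y}$ is an intertwining operator of type $\binom{T_\sigma(M\boxtimes_V N)}{M^1\ T_\sigma(N)}$ by the Proposition. The key step is the universal property. Since every $\sigma$-twisted $V^{\otimes k}$-module is $T_\sigma(W)$ for some $V$-module $W$ (Theorem \ref{thm:3.1}), take any such $W$ and any $\overline{\mathcal Y}'$ of type $\binom{T_\sigma(W)}{M^1\ T_\sigma(N)}$. By the inverse construction in the preceding Theorem, $\mathcal Y':=\pi^{-1}(\overline{\mathcal Y}')$ is an intertwining operator of type $\binom{W}{M\ N}$. The universal property of $M\boxtimes_V N$ then gives a unique $V$-homomorphism $f:M\boxtimes_V N\to W$ with $f\circ\mathcal Y=\mathcal Y'$.

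Next I would check that $f$, viewed as the same linear map $T_\sigma(M\boxtimes_V N)\to T_\sigma(W)$, is a $\sigma$-twisted $V^{\otimes k}$-homomorphism. This holds because the twisted vertex operators of $u^1$ are given by $Y_W(\Delta_k(z)u,z^{1/k})$, the $u^1$ generate $V^{\otimes k}$ under the twisted action (the $\sigma$-translates $u^{j}$ are obtained via the monodromy $z^{1/k}\mapsto e^{2\pi i j/k}z^{1/k}$, as in \cite{BDM}), and $f$ commutes with all $Y(v,z)$. Moreover $f\circ\overline{\mathcal Y}(w^1,z)=f\,\mathcal Y(\Delta_k(z)w,z^{1/k})=\mathcal Y'(\Delta_k(z)w,z^{1/k})=\overline{\mathcal Y}'(w^1,z)$. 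Since an intertwining operator out of $M^1$ is determined by its values on the $w^1$ (the uniqueness clause of the Proposition), $f\circ\overline{\mathcal Y}=\overline{\mathcal Y}'$.

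For uniqueness, any twisted homomorphism $T_\sigma(M\boxtimes_V N)\to T_\sigma(W)$ is a $V$-homomorphism $M\boxtimes_V N\to W$, because $V$ acts through $u^1$ and $\Delta_k$ is invertible. Uniqueness therefore follows from that of $f$. This proves both existence and the isomorphism. The special case follows from $M\boxtimes_V V\cong M$.

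The main obstacle I expect is the reduction to generators $u^1$ and $w^1$: proving that a map intertwining only the $u^1$-actions intertwines all of $V^{\otimes k}$, and that an intertwining operator is determined on $M^1$ by the $w^1$. For this I would rely on the $\sigma$-twisted Jacobi identity together with the uniqueness statements already in Theorem \ref{thm:3.1} and the Proposition, rather than redoing the analysis.
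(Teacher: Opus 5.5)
Your proposal is correct and follows the argument behind the paper's statement, which the paper recalls from \cite{DLXY} without proof. That argument combines the linear isomorphism $\pi$, the fact (Theorem \ref{thm:3.1}) that every $\sigma$-twisted $V^{\otimes k}$-module is some $T_\sigma(W)$, and the universal property of $M\boxtimes_V N$. The generator reductions you flag are exactly the uniqueness clauses of Theorem \ref{thm:3.1} and the Proposition, and in your uniqueness step the implication $\psi\circ\overline{\mathcal Y}=\overline{\mathcal Y}'\Rightarrow\psi\circ\mathcal Y=\mathcal Y'$ is just $\pi^{-1}$ applied to both sides.
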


In addition, the following theorem is proved in \cite{DLXY}.
\begin{thm}\label{FP}
	Let $V$ be a $C_2$-cofinite, rational and self-dual VOA of CFT type, and let $M_1, \dots, M_k, N $ be $V$-modules. Let $\sigma$ be a $k$-cycle. Then
	\[
	(M_1\otimes \cdots \otimes M_k)\boxtimes_{V^{\otimes k}}T_{\sigma} (N) 
	\cong T_{\sigma} (M_1\boxtimes_V \cdots \boxtimes_V M_k \boxtimes_V N).
	\]
\end{thm}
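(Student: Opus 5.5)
The plan is to reduce the general statement to the special case already recorded above, namely $(M\otimes V^{\otimes(k-1)})\boxtimes T_\sigma(N)\cong T_\sigma(M\boxtimes_V N)$. Two further ingredients are needed: the invariance of $T_\sigma$ under conjugation by $\sigma$, and the associativity of the tensor product in the braided (crossed) setting of Section 3.2, applied with $G=\langle\sigma\rangle$. The first step is the factorization
\[
M_1\otimes\cdots\otimes M_k\;\cong\;M_1^{(1)}\boxtimes_{V^{\otimes k}} M_2^{(2)}\boxtimes_{V^{\otimes k}}\cdots\boxtimes_{V^{\otimes k}} M_k^{(k)},
\]
where $M_i^{(i)}=V\otimes\cdots\otimes M_i\otimes\cdots\otimes V$ has $M_i$ in the $i$-th slot. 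This holds because fusion for $V^{\otimes k}$ is computed factorwise when $V$ is rational and $C_2$-cofinite, and $V$ is the unit in each factor.

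The second step is to move every factor into the first slot. Note that $M_i^{(i)}=(M_i^{(1)})\circ\sigma^{-(i-1)}$, up to the convention for the right action. For a $\sigma$-twisted module $X$ one has $X\circ\sigma\cong X$, since $\sigma$ commutes with itself and $X$ is $\mathbb N$-graded (recalled after the definition of the action). Applying $\circ\sigma^{j}$ to a fusion product, by naturality of $\boxtimes$ under automorphisms, gives
\[
M_i^{(i)}\boxtimes T_\sigma(N)\cong \bigl(M_i^{(1)}\boxtimes (T_\sigma(N)\circ\sigma^{i-1})\bigr)\circ\sigma^{-(i-1)}\cong \bigl(M_i^{(1)}\boxtimes T_\sigma(N)\bigr)\circ \sigma^{-(i-1)}.
\]
By the case $k=1$ of the single-factor theorem, the right-hand side is $T_\sigma(M_i\boxtimes_V N)\circ\sigma^{-(i-1)}\cong T_\sigma(M_i\boxtimes_V N)$.

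The third step is induction on the number of nontrivial slots. Using associativity of $\boxtimes_{V^{\otimes k}}$ between untwisted and twisted objects (i.e. associativity in the fusion category $\Rep(V^{\otimes k})$ over $(V^{\otimes k})^{\langle\sigma\rangle}$, via Theorem \ref{KO} and the lemmas identifying twisted modules as objects there), we peel off one factor at a time:
\[
(M_1^{(1)}\boxtimes\cdots\boxtimes M_k^{(k)})\boxtimes T_\sigma(N)\cong M_1^{(1)}\boxtimes\Bigl(\cdots\boxtimes\bigl(M_k^{(k)}\boxtimes T_\sigma(N)\bigr)\Bigr).
\]
Applying step two repeatedly then yields $T_\sigma(M_1\boxtimes_V\cdots\boxtimes_V M_k\boxtimes_V N)$; the order of the factors is harmless because $\boxtimes_V$ is braided commutative.

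The main obstacle is justifying step two rigorously. One must check that twisting by $\sigma$ is compatible with the fusion product, i.e. that $(A\boxtimes B)\circ g\cong (A\circ g)\boxtimes(B\circ g)$ for $g\in\langle\sigma\rangle$. I would prove this directly at the level of intertwining operators: if $\mathcal Y$ is an intertwining operator of type $\binom{C}{A\ B}$, then $\mathcal Y(g\,\cdot,z)$ is an intertwining operator of type $\binom{C\circ g}{A\circ g\ \ B\circ g}$. This yields a bijection of intertwining spaces, and hence the isomorphism by the universal property. For $T_\sigma(N)\circ\sigma\cong T_\sigma(N)$, an explicit isomorphism is given by $e^{2\pi i L(0)/k}$ up to a scalar, so the argument can avoid citing \cite{DLM2} if preferred.
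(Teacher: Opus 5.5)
The paper does not prove this theorem; it quotes it from \cite{DLXY}, so there is no in-paper argument to compare against. Your proposal is correct, and it is the natural route given the tools the paper sets up just before the statement. You factor $M_1\otimes\cdots\otimes M_k$ into single-slot modules $M_i^{(i)}$. You move each slot to position $1$ using $T_\sigma(N)\circ\sigma\cong T_\sigma(N)$ and the single-slot theorem. You then peel the factors off one at a time by associativity in $\Rep(V^{\otimes k})$ over $(V^{\otimes k})^{\langle\sigma\rangle}$. Two points should be tightened.

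\textbf{The twist-compatibility step.} The operator of type $\binom{C\circ g}{A\circ g\ \ B\circ g}$ is $\mathcal{Y}$ itself on the same underlying spaces, not $\mathcal{Y}(g\,\cdot,z)$. Indeed $A=M_i^{(1)}$ is not $\sigma$-stable, so $g$ does not act on it. The verification is short. Since $g\in\langle\sigma\rangle$ commutes with $\sigma$, the twisted Jacobi identity for $Y_{A\circ g},Y_{B\circ g},Y_{C\circ g}$ evaluated at $u$ is literally the original identity evaluated at $gu$. This gives an equality of intertwining spaces, and hence $(A\boxtimes B)\circ g\cong (A\circ g)\boxtimes(B\circ g)$.

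\textbf{The two tensor products.} Your argument uses two products that are a priori different. The single-slot theorem and your twist-compatibility concern the fusion product defined by twisted intertwining operators through its universal property. Associativity, by contrast, is a property of the categorical product $\boxtimes_{V^{\otimes k}}$ in $\Rep(V^{\otimes k})$. Theorem \ref{KO} and the lemmas placing twisted modules in $\Rep(V^{\otimes k})$ do not by themselves identify the two.

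You need to invoke the known fact that, for untwisted $W_1$ and $\sigma$-twisted $W_2$, morphisms $W_1\boxtimes_{V^{\otimes k}}W_2\to W_3$ in $\Rep(V^{\otimes k})$ correspond to twisted intertwining operators of type $\binom{W_3}{W_1\ W_2}$. You also need that on untwisted objects the categorical product is the usual fusion product, which your first step uses.

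With that in place, your third step goes through as written. Commutativity of $\boxtimes_V$ is not needed there, since the order of the factors is preserved.
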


\section{Simple current extensions}
In this section, we recall some basic results on simple current extensions and their representation theory, following \cite{Y1}. We review structural properties of such extensions and describe how modules over a VOA extend to (twisted) modules over its simple current extensions, which will be used in later sections.

We begin by recalling the notion of simple current extensions from \cite{Y1}.
Let $V^0$ be a simple rational $C_2$-cofinite VOA of CFT type and 
let $\{ V^\alpha \mid \alpha \in D\}$ be a set of inequivalent irreducible 
$V^0$-modules indexed by an abelian group $D$.
A {\it $D$-graded extension} of $V^0$ is a simple VOA 
$$V_D=\oplus_{\alpha\in D} V^\alpha$$ such that $V^0$ is a full subVOA of $V_D$  and $V_D$ carries a $D$-grading via its vertex operator map
$$Y(v^\alpha,z)v^\beta\in V^{\alpha+\beta},\ \text{for}\  \text{all}\  v^\alpha \in V^\alpha, 
v^\beta \in V^\beta.$$

Recall that irreducible $V^0$-module $J$ is called a \emph{simple current $V^0$-module} if, for every irreducible $V^0$-module $M$, the fusion product $
J\boxtimes_{V^0} M
$
is again irreducible. Equivalently, fusion with $J$ defines a permutation of the set of isomorphism classes of irreducible $V^0$-modules.

If, in addition, each  $V^\alpha (\alpha \in D)$ is a  simple current $V^0$-module, then 
$V_D$ is called a {\it $D$-graded simple current extension} of $V^0$.
The rationality of $V^0$ implies $D$ is automatically finite. 

Next we list some basic  facts about simple current extensions.

\begin{prop}\label{prop:2.3}
  (\cite{ABD,DM,La,Y1})
  Let $V^0$ be a simple rational $C_2$-cofinite VOA of CFT type and let $V_D=\oplus_{\alpha\in D} V^\alpha$ be a $D$-graded simple current extension
  of $V^0$.
  Then
 \begin{enumerate}[label=\textnormal{(\arabic*)}]
\item $V_D$ is rational and $C_2$-cofinite.
 \item  If $\widetilde{V}_D=\oplus_{\alpha\in D}\widetilde{V}^\alpha$ is another $D$-graded 
  simple current extension of $V^0$ such that $\widetilde{V}^\alpha\simeq V^\alpha$ as 
  $V^0$-modules, then $V_D$ and $\widetilde{V}_D$ are isomorphic VOAs over $\C$.
  Such an isomorphism between $V_D$ and $\widetilde{V}_D$ may be chosen to extend a $V^0$-module isomorphism.

\item  For any subgroup $E$ of $D$, the subalgebra $V_E:=\oplus_{\alpha \in E} V^\alpha$ 
  is an $E$-graded simple current extension of $V^0$.
  Moreover, viewing $V_D$ as an extension of $V_E$, the VOA $V_D$ is a $D/E$-graded simple current extension of $V_E$.
  \end{enumerate}
\end{prop}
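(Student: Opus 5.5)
This proposition is cited from \cite{ABD,DM,La,Y1}, so the plan is to assemble it from known structural facts about simple current extensions, treating each part separately.

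For (1), I would use the grading by the dual group. Since $D$ is finite abelian, every character $\chi\in\widehat D$ defines a linear map $g_\chi$ acting on $V^\alpha$ as $\chi(\alpha)$. The $D$-grading of the vertex operators shows that $g_\chi$ is an automorphism of $V_D$. Moreover $V^0$ is the fixed-point algebra $V_D^{\widehat D}$.
\begin{itemize}
\item $C_2$-cofiniteness: $V_D$ is a finitely generated $V^0$-module, since $D$ is finite and each $V^\alpha$ is irreducible. By \cite{ABD}, a VOA that is a finite module over a $C_2$-cofinite subVOA is $C_2$-cofinite.
\item Rationality: I would use the induction method of \cite{DM,La,Y1}. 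Any weak $V_D$-module $M$ contains a nonzero irreducible $V^0$-submodule $W$ (by rationality of $V^0$). The sum $\sum_\alpha V^\alpha\cdot W$ is a $V_D$-submodule, and it is a quotient of $\bigoplus_\alpha V^\alpha\boxtimes_{V^0}W$ by the universal property of fusion. This gives complete reducibility.
\end{itemize}
Alternatively, rationality follows from \cite{ABD}: a $C_2$-cofinite VOA of CFT type is rational if and only if its Zhu algebra is semisimple, and semisimplicity transfers along a finite abelian orbifold.

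For (2), the key input is uniqueness of intertwining operators. The fusion rule $N_{V^\alpha V^\beta}^{V^{\alpha+\beta}}=1$ holds for simple currents. Hence the restriction of $\widetilde Y$ to $V^\alpha\otimes V^\beta$ is a scalar multiple $c(\alpha,\beta)$ of the restriction of $Y$, after fixing $V^0$-isomorphisms $\phi_\alpha:V^\alpha\to\widetilde V^\alpha$. Skew-symmetry and associativity (through the $3$-dimensional braiding/fusion data) force $c$ to be a $2$-cocycle on $D$ with values in $\C^\times$, and make $c$ symmetric, that is, $c(\alpha,\beta)=c(\beta,\alpha)$. Over $\C$, a symmetric $2$-cocycle on a finite abelian group is a coboundary, since abelian extensions of $D$ by the divisible group $\C^\times$ split. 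So $c(\alpha,\beta)=f(\alpha)f(\beta)/f(\alpha+\beta)$, and the rescaled maps $f(\alpha)\phi_\alpha$ give a VOA isomorphism that extends a $V^0$-module isomorphism. \textbf{This is the main obstacle}: one has to prove the cocycle and symmetry properties carefully from the Jacobi identity, which is where the normalization subtleties of \cite{Y1} enter.

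For (3), closure of $V_E$ under the vertex operators is immediate from the grading, and $V_E$ is simple because $V_D$ is simple. Each $V^\alpha$ is a simple current over $V^0$, which gives the $E$-graded statement. For the $D/E$ statement, each coset piece $V_{\alpha+E}=\bigoplus_{\beta\in E}V^{\alpha+\beta}$ has to be shown to be a simple current $V_E$-module.
\begin{itemize}
\item Irreducibility: $V_{\alpha+E}$ is the induced module $V_E\boxtimes_{V^0}V^\alpha$ from \cite{Y1}, and it is an irreducible $V_E$-module.
\item Simple current property: use Theorem \ref{KO}. The functor $F$ is tensor, so $F(V^\alpha)$ is invertible in $\Rep(V_E)$, and invertible objects are simple currents.
\end{itemize}
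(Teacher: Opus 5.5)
The paper gives no proof of this proposition; it is quoted from \cite{ABD,DM,La,Y1}. Your $C_2$-cofiniteness argument is fine. Your (2) is the standard argument behind \cite{DM}. One-dimensional fusion rules give $\widetilde Y=c(\alpha,\beta)\,Y$ once the $\phi_\alpha$ are fixed. Associativity and skew-symmetry, applied directly together with $Y(u,z)v\neq0$ in a simple VOA, make $c$ a normalized symmetric $2$-cocycle; no braiding data is needed. Then $\mathrm{Ext}^1_{\Z}(D,\C^\times)=0$ makes $c$ a coboundary.

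The genuine gap is the rationality half of (1). You show that an admissible $V_D$-module $M$ is the sum of the submodules $V_D\cdot W=\sum_\alpha V^\alpha\cdot W$, each a $V^0$-quotient of $\bigoplus_\alpha V^\alpha\boxtimes_{V^0}W$. This only restates that $M$ is semisimple over $V^0$. It gives no $V_D$-stable complement to a $V_D$-submodule, and semisimplicity over a subalgebra does not lift to an extension without a splitting argument. Note also that $V_D\cdot W$ need not be irreducible. If $M_1\not\cong M_2$ are irreducible $V_D$-modules both containing $W$ (which can happen when $D_W\neq0$), a diagonal copy of $W$ generates all of $M_1\oplus M_2$.

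The missing idea is the reduction to a twisted group algebra used in \cite{La,Y1}:
\begin{enumerate}
\item Write the part of $M$ supported on the $D$-orbit of $W$ as $\bigoplus_s\mathrm{Hom}_{V^0}(W^s,M)\otimes W^s$.
\item Fix a nonzero $\mathcal I^\alpha_s$ spanning the one-dimensional space $I_{V^0}\binom{V^\alpha\boxtimes W^s}{V^\alpha\ W^s}$. Then for $u\in V^\alpha$ one has $Y_M(u,z)f(x)=(A^\alpha_s f)\bigl(\mathcal I^\alpha_s(u,z)x\bigr)$ for a unique linear map $A^\alpha_s\colon\mathrm{Hom}_{V^0}(W^s,M)\to\mathrm{Hom}_{V^0}(W^{s+\alpha},M)$.
\item Associativity gives $A^\alpha A^\beta=\lambda(\alpha,\beta)A^{\alpha+\beta}$ with nonzero scalars. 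Hence every $A^\alpha$ is invertible, and $\mathrm{Hom}_{V^0}(W,M)$ is a module over $\C^{\lambda}[D_W]$.
\item $V_D$-submodules of this block correspond bijectively to $\C^{\lambda}[D_W]$-submodules of $\mathrm{Hom}_{V^0}(W,M)$, transported to the other cosets by the $A^\alpha$.
\item Maschke's theorem for twisted group algebras over $\C$ then gives complete reducibility.
\end{enumerate}
Your fallback, that semisimplicity of Zhu's algebra ``transfers along a finite abelian orbifold'', is not an available lemma; it restates what has to be proved.

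For (3), going through Theorem~\ref{KO} is a legitimate alternative to \cite{Y1}, but it has three dependencies you should state. It needs $V_E$ to be rational, which is exactly the unproved half of (1). It needs $V_E$ to be self-dual, which is not among the hypotheses. It needs the tensor product of local objects in $\Rep(V_E)$ to coincide with the $V_E$-fusion product, so that invertibility yields the simple current property. Finally, ``$V_E$ is simple because $V_D$ is'' needs a one-line argument: an ideal of $V_E$ is a sum of some $V^\alpha$, and $Y(V^{-\alpha},z)V^\alpha\neq0$ forces it to contain $V^0$.
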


\begin{rem}\label{DandG}
	Let $V_D=\oplus_{\alpha\in D} V^\alpha$ be a $D$-graded simple current extension
	of $V^0$ and let $D^*= \irr (D)$ be the set of all characters of $D$. Then $D^*$ acts on $V_D$ by the action $$\chi \cdot u= \chi(\alpha) u, \quad \chi\in D^*, u\in V^\alpha, \alpha \in D.$$ In particular, $V^0= (V_D)^{D^*}$.    

Conversely, let $G\le\Aut(V)$ be a finite abelian subgroup and assume that $V$ is  a simple, rational, $C_2$-cofinite VOA of CFT type. Then  $V^G$ is also a simple, rational, $C_2$-cofinite VOA of CFT type. For $\chi\in \irr(G)$, set  $$V^\chi=\{ v\in V \mid gv=\chi(g) v \text{ for all } g\in G\}.$$ 
Note that all irreducible characters of $G$ are linear characters since $G$ is abelian.  
Therefore, we have  a decomposition $$V=\oplus_{\chi\in \irr (G)} V^\chi$$  where each $V^\chi$ is a  simple current $V^G$-module. In particular, $V$ is a  simple current extension of $V^G$ graded by $\irr( G)$.  
\end{rem}

\begin{prop}[\cite{Sh04}] \label{lift} 
Let $V_D=\oplus_{\alpha\in D} V^\alpha$ be a $D$-graded simple current extension
of $V^0$. Let $\sigma\in \Aut(V^0)$ be such that $$\{(V^\alpha)\circ \sigma\mid \alpha\in D \} = \{V^\alpha\mid \alpha\in D \}.$$ Then $\sigma$ admits a lift $\widetilde{\sigma}
\in \Aut(V_D)$,  i.e., $\widetilde{\sigma} (V^0) = V^0$ and $\widetilde{\sigma}|_{V^0} = \sigma$. Moreover, such a lift is unique up to multiples
of $D^*$.
\end{prop}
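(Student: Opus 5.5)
The plan is to build $\widetilde{\sigma}$ on each summand and then use the uniqueness part of Proposition \ref{prop:2.3}(2). By hypothesis, twisting by $\sigma$ permutes the set $\{V^\alpha\}$. Since these modules are pairwise inequivalent, there is a bijection $\tau: D\to D$ such that $(V^\alpha)\circ\sigma \cong V^{\tau(\alpha)}$.

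First I check that $\tau$ is a group automorphism. The functor $M\mapsto M\circ\sigma$ is an autoequivalence of the module category of $V^0$ that respects fusion products: an intertwining operator $\mathcal{Y}$ of type $\binom{W}{M\ N}$ gives an intertwining operator of type $\binom{W\circ\sigma}{M\circ\sigma\ N\circ\sigma}$ with the same underlying map. The $D$-grading of $V_D$ gives $V^\alpha\boxtimes V^\beta\cong V^{\alpha+\beta}$; this holds because the restriction of $Y$ to $V^\alpha\otimes V^\beta$ is a nonzero intertwining operator into $V^{\alpha+\beta}$ (nonzero by simplicity of $V_D$), and the fusion product of simple currents is irreducible. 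Hence $\tau(\alpha+\beta)=\tau(\alpha)+\tau(\beta)$.

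Next I define a new VOA $\widetilde{V}_D=\oplus_\alpha \widetilde{V}^\alpha$ with $\widetilde{V}^\alpha := V^{\tau^{-1}(\alpha)}\circ\sigma$. Concretely, $\widetilde{V}_D$ is the same vector space $V_D$, but $V^0$ acts through $\sigma$, and as a $V^0$-module it is $V_D\circ\sigma$. The vertex operator is the original $Y$, relabelled. This is again a simple VOA: the map $\sigma^{-1}$ on $V^0$ identifies its vertex operator subalgebra with $V^0$, and the conformal vector is preserved because $\sigma\omega=\omega$. It is $D$-graded, with $\widetilde{V}^\alpha\cong V^\alpha$ as $V^0$-modules. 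By Proposition \ref{prop:2.3}(2) there is a VOA isomorphism $\phi:\widetilde{V}_D\to V_D$ extending a $V^0$-module isomorphism, and I may normalise it to be the identity on the $V^0$-component. Composing $\phi$ with the identification of underlying spaces then gives a linear map $\widetilde{\sigma}$ of $V_D$. I then verify that $\widetilde{\sigma}|_{V^0}=\sigma$ and that $\widetilde{\sigma}$ intertwines $Y$. This verification is a matter of bookkeeping, making sure $\sigma$ appears rather than $\sigma^{-1}$; if it comes out inverted, I apply the same construction to $\sigma^{-1}$ and invert the result.

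For uniqueness, suppose $\widetilde{\sigma}$ and $\widetilde{\sigma}'$ are two lifts. Then $h=\widetilde{\sigma}^{-1}\widetilde{\sigma}'$ fixes $V^0$ pointwise, so it is a $V^0$-module automorphism of $V_D$. It preserves each $V^\alpha$, since the $V^\alpha$ are inequivalent irreducible modules, so by Schur's lemma $h$ acts on $V^\alpha$ by a scalar $c_\alpha$. Compatibility with $Y$, together with $Y(V^\alpha,z)V^\beta\neq0$ (from simplicity), forces $c_{\alpha+\beta}=c_\alpha c_\beta$. Hence $c$ is a character of $D$, that is, $h\in D^*$.

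The main obstacle is the normalisation in the existence step. I must ensure that the isomorphism from Proposition \ref{prop:2.3}(2) can be taken to be the identity on $V^0$, not merely an automorphism of it. What I would try first is to rescale $\phi$ componentwise by scalars. This is legitimate because the $V^0$-module isomorphism we extend can be chosen to be the identity on the $0$-component, and the proposition explicitly allows extending a chosen module isomorphism.
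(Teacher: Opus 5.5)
The paper does not prove this proposition itself; it only quotes \cite{Sh04}. Your argument is correct and is essentially the standard proof of that result: transport the extension along $\sigma$, apply the uniqueness of simple current extensions (Proposition \ref{prop:2.3}(2)), and then use Schur's lemma together with $Y(u,z)v\neq 0$ to obtain the $D^*$-ambiguity.

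The normalisation you flag as the main obstacle is automatic. Let $\phi$ be a VOA isomorphism that is $V^0$-linear for the action $a\mapsto Y(\sigma a,z)$. Then for $a\in V^0$ we have $\phi(\sigma a)=\phi\bigl((\sigma a)_{-1}\mathbf{1}\bigr)=a_{-1}\phi(\mathbf{1})=a$. Hence $\phi|_{V^0}=\sigma^{-1}$, and $\widetilde{\sigma}=\phi^{-1}$ is the required lift, with no rescaling needed and no uncertainty about $\sigma$ versus $\sigma^{-1}$.
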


The representation theory of  simple current extensions is well developed in \cite{La,Y1}.
It is proved that any $V^0$-module can be extended to a certain twisted 
module over $V_D$.
We now review a result from \cite{La} that will be used later. 
	
	%
		
	%

\begin{thm} \label{decompose}
Let $V$ be  
a simple rational $C_2$-cofinite VOA of CFT type,  and let  $G\le \mathrm{Aut\,}(V)$ be a finite
	abelian subgroup.  
Let $M$ be an irreducible $V$-module and $\left\{
L_{1},\cdots ,L_{n}\right\} $ be the set of all inequivalent irreducible $V^{G}$-submodules of $M$. Then there exists a subgroup $H\le G$ and
irreducible projective representations $Q_{1},\cdots ,Q_{n}$ of the character group  $\left( G/H\right) ^{*}=\irr \left(
G/H\right) $  such
that, as $V^G$-modules, 
\begin{equation*}
M\cong\bigoplus _{i}Q_{i}\otimes L_{i},
\end{equation*}
and $\dim Q_1=\dim Q_2 =\cdots=\dim Q_n$.  
Moreover, for each  $i$, the tensor product $Q_{i}\otimes L_{i} $ is an irreducible $V^{H}$-module.
\end{thm}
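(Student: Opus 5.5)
Since $G$ is abelian, I would reduce to a graded-algebra statement about the commutant of $V^G$ on $M$, and then apply Clifford theory for a group algebra twisted by a 2-cocycle. By Remark \ref{DandG}, $V=\oplus_{\chi\in\irr(G)}V^\chi$ is a $G^*$-graded simple current extension of $V^G$, with each $V^\chi$ an irreducible simple current. As a $V^G$-module, $M$ is completely reducible because $V^G$ is rational. Write $M=\oplus_i M_i$ for the decomposition into isotypic components, $M_i\cong Q_i\otimes L_i$, where $Q_i=\mathrm{Hom}_{V^G}(L_i,M)$ is the multiplicity space.

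The first step is to let $V^\chi$ act on these components. For each $\chi$, the component operators of $Y(v,z)$ with $v\in V^\chi$ map $M_i$ into the isotypic component of $V^\chi\boxtimes L_i$, which is again irreducible. Define
\[
H^\perp=\{\chi\in G^*\mid V^\chi\boxtimes L_1\cong L_1\},
\]
and let $H\le G$ be the subgroup whose annihilator in $G^*=\irr(G)$ is $H^\perp$. Then $H^\perp\cong (G/H)^*$, and by irreducibility of $M$ over $V$ the set $\{L_i\}$ is the orbit of $L_1$ under $G^*/H^\perp$.

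The second step is to build the projective representations. For $\chi\in H^\perp$, the intertwining operator $Y|_{V^\chi\otimes L_i}$ lands in $M_i$. Because $V^\chi$ is a simple current, $\dim I\binom{L_i}{V^\chi\ L_i}=1$. Hence this restriction has the form $\phi_\chi(\cdot)\otimes\mathcal{I}_\chi$ for a fixed intertwining operator $\mathcal{I}_\chi$ and an operator $\phi_\chi\in\mathrm{End}(Q_i)$. Associativity (the Jacobi identity in $V$) together with associativity of fusion for simple currents gives $\phi_\chi\phi_\psi=c(\chi,\psi)\phi_{\chi+\psi}$ for a 2-cocycle $c$, so $Q_i$ is a projective representation of $(G/H)^*$. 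Irreducibility of $M$ forces $Q_i$ to be irreducible: a proper invariant subspace $Q'\subset Q_1$ would generate, via the $V^\chi$ for $\chi$ outside $H^\perp$, a proper $V$-submodule of $M$. The modules $V^\chi$ with $\chi\notin H^\perp$ give isomorphisms $Q_i\cong Q_j$ up to twisting the cocycle by a coboundary, and so $\dim Q_i$ is independent of $i$.

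The final step is the $V^H$ claim. Note that $V^H=\oplus_{\chi\in H^\perp}V^\chi$, because $\chi|_H=1$ exactly when $\chi\in H^\perp$. Then $Q_i\otimes L_i$ is stable under $V^H$ and is a module for it. Any $V^H$-submodule is $V^G$-stable, so it has the form $Q'\otimes L_i$ with $Q'$ invariant under all $\phi_\chi$; irreducibility of $Q_i$ then gives $Q'=0$ or $Q'=Q_i$.

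The main obstacle is the cocycle verification: showing that the scalars arising from associativity of $\mathcal{I}_\chi$ and $\mathcal{I}_\psi$ are consistent and form a 2-cocycle. I would handle it by comparing the iterate $Y(Y(u,z_0)v,z_2)$ with the product $Y(u,z_1)Y(v,z_2)$ restricted to $L_i$. Since all the relevant spaces of intertwining operators are one-dimensional, each such comparison reduces to a single scalar identity.
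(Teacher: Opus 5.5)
The paper does not prove this theorem; it quotes it from \cite{La}. Your argument is correct, and it is exactly the mechanism the paper relies on in Section~\ref{S:3.4}: your $H^\perp$ is the fusion stabilizer $D_W$, your $H$ is its annihilator (so $V^H=V_{D_W}$), the one-dimensional intertwining spaces give the normalized operators $I^\alpha_s$, their associativity scalars $\lambda_s$ form the $2$-cocycle, and the multiplicity spaces $Q^s$ are irreducible $\C^{\lambda_s}[D_W]$-modules.

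The details you leave implicit are routine. The maps $\phi_\chi$ and $Q_i\to Q_j$ are invertible: composing with $V^{-\chi}$ gives a nonzero scalar, because for nonzero $u\in V^{-\chi}$, $v\in V^{\chi}$ the series $Y(u,z_0)v$ is nonzero with coefficients in $V^G$ ($V$ being simple), so the iterate does not vanish on $M_i$. Once $\phi_\chi\phi_\psi$ is a nonzero multiple of $\phi_{\chi\psi}$, the cocycle identity is just associativity of composition, so the step you flag as the main obstacle is actually the easy part.
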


\subsection{Induced modules} \label{S:3.4}

In this subsection, we recall the construction of induced modules for simple current extensions and describe their decompositions as modules over $V^0$ and over suitable intermediate extensions \cite{Y1}. 
Let $M$ be an irreducible $V_D$-module. Since $V^0$ is rational, 
$M$ contains an irreducible $V^0$-submodule $W$. Define the stabilizer subgroup 
 $$D_W:=\{\alpha \in D \mid V^\alpha \boxtimes_{V^0} W\simeq_{V^0} W\}.$$
Then $D_W$ is  a subgroup of $D$. 

Let $G=\irr (D)$. Then $G$ acts on $V_D$ and $ V^0=(V_D)^G$. 
Set $$H :=\{ \chi\in \irr (D)\mid  \chi(\alpha)=1 \text{ for all } \alpha\in D_W\}.$$  
Then $H$ consists of all characters of $D$ that are trivial on $D_W$. It follows that $V^H= V_{D_W}$, and $(G/H)^*\cong D_W$ as abelian groups.

Set $S_W= D/D_W$ and for any $s=\alpha+D_W \in S_W$,  define $$W^s= V^\alpha \boxtimes_{V^0} W.$$ Note that the isomorphism class of $W^s$ is independent of the choice of the representative $\alpha$ of the coset $s$ and $D_W=D_{W^s}$ by the associative and commutative properties of the fusion products.  

For each $\alpha\in D_W$ and $s\in S_W$, fix a $V^0$-intertwining operator $I^\alpha_s(\cdot, z)$ of type $\binom{W^s}{V^\alpha \ W^s}$. We normalize the intertwining
operator such that $I^0_s(1, z)=\mathrm{Id}_{W^s}$; in other words, $I^0_s(\cdot, z)$ is the vertex operator on the $V^0$-module $W^s$.   By
the associative property of intertwining operators, there are (nonzero)
scalars $\lambda_s(\alpha, \beta)\in \C$ such that the following equality holds:
\[
\langle \nu, I^\alpha_s(x^\alpha, z_1)I^\beta_s(x^\beta,z_2)w\rangle
= \lambda_s(\alpha, \beta) \langle \nu, I_s^{\alpha+\beta}( Y_{V_D}(x^\alpha, z_0)x^\beta,z_2)w\rangle|_{z_0=z_1-z_2}, 
\] 
where $x^\alpha\in V^\alpha, x^\beta\in V^\beta, w\in W^s$ and $\nu \in (W^{s})^*$.  By iterating the associative property, one can deduce that 
\[
\lambda_s(\alpha, \beta)\lambda_s(\alpha+\beta, \gamma) = \lambda_s(\alpha, \beta+\gamma)\lambda_s(\beta, \gamma),\quad \alpha, \beta, \gamma\in D_W.
\]
In other words, $\lambda_s$ defines a $2$-cocycle on $D_W$.
Let 
\[
1 \to \C^* \to \widehat{D_W} \to D_W \to 1 
\]
be a central extension of $D_W$ associated with  the  2-cocycle $\lambda_s$. Equivalently, $\lambda_s$ determines a twisted group algebra $\mathbb C^{\lambda_s}[D_W]$.
By Theorem \ref{decompose}, we have 
$$
M\cong \bigoplus_{s\in S_W} Q^s\otimes W^s,
$$
where $Q^s$ is an irreducible $\C^{\lambda_s}[D_W]$-module for each $s\in S_W$.

Let $W$ be an irreducible $V^0$-module. For $\alpha\in D$, define  
$$\chi_W(\alpha)= \rho(V^\alpha \boxtimes W) - \rho(W) \quad \mod \Z,
$$
where $\rho(W)$ is the conformal weight of the irreducible $V^0$-module $W$.
By the associativity and the fact that the powers of $z$ in an intertwining operator of type $I_{V^0}\binom{V^\alpha \boxtimes W}{V^\alpha \qquad W}$ are contained in $\rho(V^\alpha \boxtimes W) - \rho(W) +\Z$ (see also \cite[Lemma 4.51]{Y1}), one can prove that $\chi_W: D \to \mathbb{Q}/\Z$ is a group homomorphism
and we obtain a character $\widehat{\chi}_W \in \irr(D)$ defined 
by $$\widehat{\chi}_W(\alpha)= \exp\left(2\pi \sqrt{-1} \chi_W(\alpha)\right).$$  

Define a linear map $\tau_W: V_D\to V_D$ by  $$\tau_W(x)= \widehat{\chi}_W(\alpha) x, \quad x\in V^\alpha.$$ 
Then $\tau_W\in \Aut(V_D)$. 
Note that $\tau_W$ acts trivially on $V^0$ and $\tau_W =\mathrm{Id}_{V_D}$ if and only if 
$\chi_W(\alpha)=0$ for all $\alpha \in D$.

Let $A_W$ be a subgroup of $D_W$ such that $\widehat{A_W}$ is a maximal abelian subgroup of $\widehat{D_W}$.  
Let $\chi$ be a linear character of $\widehat{A_W}$,  and let $F_\chi$ be the corresponding irreducible $\widehat{A_W}$-module.  Then $W_\chi=F_\chi\otimes W$ is an irreducible $V_{A_W}$-module. Define the induced module
\[
X_{W_\chi}:= V_D\boxtimes_{V_{A_W}}W_\chi.
\]
As a $V^0$-module, one has 
$$X_{W_\chi}\cong \bigoplus_{\alpha \in D/A_W} V^{\alpha} \boxtimes_{V^0} W_\chi.$$
Then $X_{W_\chi}$ is an irreducible $\tau_W$-twisted $V_D$-module.  As a $V_{D_W}$-module,  $X_{W_\chi}$ splits as 
\[ X_{W_\chi}\cong
\bigoplus_{s \in D/D_W}  \left(\mathrm{Ind}_{\widehat{A}_W}^{\widehat{D}_W} e^s\otimes  F_\chi\right) \otimes W^s, 
\]
where $e^s$ denotes the one-dimensional representation corresponding to the coset $s\in D/D_W.$


\section{Simple current extensions and permutation orbifolds} \label{S:4}

In this section, we study the interplay between simple current extensions and permutation orbifolds. In particular, we construct and analyze twisted modules for extensions of $V^{\otimes k}$ arising from permutation symmetries, and develop an induced-module approach in this setting. 

Throughout this section, let $V$ be a rational, $C_2$-cofinite and self-dual VOA of CFT type, and let $k\geq 2$ be a positive integer. 
Let $U$ be a simple current extension of the tensor product VOA $V^{\otimes k}$. Then there exists a finite abelian group $H\le  \Aut(U)$ such that $U^H=V^{\otimes k}.$ 

Let $D=\irr(H)$ be the set of all irreducible characters of $H$. For $\alpha \in D$, define  
$$U_\alpha:= \{ u\in U\mid h(u)= \alpha(h)u\   \text{for}\ \text{all}\  h\in H\}.$$ Then
$$U= \oplus_{\alpha \in D} U_\alpha\ \text{and}\  U_{\mathrm{Id}}= V^{\otimes k}.$$    
Note that $D$  also forms a finite abelian group.  

\begin{rem}In the notation of Section~\ref{S:3.4}, $U$ is a $D$-graded simple current extension of
$V^{\otimes k}$: we identify $V^0=V^{\otimes k}$ and $V^\alpha=U_\alpha$.
Thus, $U=V_D$ and $D^\ast=\irr(D)\cong H$.
In this section, we use the notation $U=\bigoplus_{\alpha\in D}U_\alpha$ throughout.
\end{rem}

\subsection{Permutation symmetries and lifted actions}

In this subsection, we study permutation symmetries of the simple current extension $U$ and their lifts. This provides the framework for constructing twisted modules later.
Recall that the symmetric group $S_k$ of degree $k$ acts on $V^{\otimes k}$ and the set of irreducible $V^{\otimes k}$-modules naturally. Define $$A_D:=\{ \tau\in S_k\mid \{U_\alpha\circ \tau\mid \alpha\in D\} = \{U_\alpha \mid \alpha\in D\} \}.$$ By Proposition \ref{lift} (see also \cite[Theorem 3.3]{Sh04}), every $\tau\in A_D$ admits a lift
$\tilde{\tau}\in \Aut(U)$, unique up to multiplication by an element of $D^\ast\cong H$.
Hence, we obtain a subgroup of $\Aut(U)$ isomorphic to the semidirect product
$$
H \rtimes A_D
$$
where the action of $A_D$ on $H$ is induced by permuting the homogeneous summands
$\{U_\alpha\}_{\alpha\in D}$.
Now assume that the $k$-cycle $\sigma=(1\,2\,\cdots\,k)$ belongs to $A_D$.
Set 
$$G= H\rtimes \langle \sigma\rangle. $$  
Then $G$ is a solvable subgroup of $\Aut(U)$. Since $U^H=V^{\otimes k}$, we have 
$$U^G =(U^H)^{\langle\sigma\rangle}= (V^{\otimes k})^{\langle\sigma\rangle}.$$ As a consequence, every irreducible $U^G$-module can be realized as a submodule of an irreducible $\sigma^i$-twisted $V^{\otimes k}$-module for some  $i\in\{0,1, \dots,k-1\}$. 

The \textbf{goal} of this paper is to construct irreducible $g$-twisted modules for $g\in G$. In particular, we will generalize the induction theory for simple current extensions to the setting of permutation-twisted modules, and develop an induced-module construction for $\sigma$-twisted $U$-modules.    

Recall that irreducible $\sigma$-twisted $V^{\otimes k}$-modules are given by 
\[
\{T_\sigma(W)\mid W\in \irr(V)\}, 
\]
where $T_{\sigma}(W)$ is defined as in Section \ref{sec3}. Note also that the irreducible modules for $V^{\otimes k}$ are of the form $$M_1\otimes \cdots\otimes M_k,$$ where $M_1,\dots, M_k$ are irreducible $V$-modules.  
Therefore, for each $\alpha\in D$, the corresponding component $U_\alpha$ is of the form $$U_\alpha= M_1^\alpha \otimes \cdots\otimes M_k^\alpha$$ for some irreducible $V$-modules $M_1^\alpha, \dots,  M_k^\alpha$.

As in Section \ref{S:3.4}, we introduce a notion of induced modules for $\sigma$-twisted modules.  For any $W\in \irr(V)$, define \emph{the stabilizer subgroup} $$D_W= \{ \alpha\in D\mid U_\alpha \boxtimes_{V^{\otimes k}} T_\sigma(W) \cong T_{\sigma}(W)\}.$$ 
By Theorem \ref{FP}, $\alpha \in D_W$ if and only if  
$$M_1^\alpha \boxtimes_{V} \cdots\boxtimes_V M_k^\alpha\boxtimes_V W\cong W.$$ 
In particular,  $D_W$ is a subgroup of $D$. By the associativity and commutativity of the fusion products,   $D_W$ is stable under the action of $\sigma$. 

Let $SC(V)$ be the set of all (inequivalent) simple current modules of $V$ and set $$\overline{U} = \{ M_1^\alpha \boxtimes_{V} \cdots\boxtimes_V M_k^\alpha \in SC(V)\mid \alpha \in D\}.$$ Define
$$ \mathcal{C}_W= \{N\in SC(V)\mid N\boxtimes_V W\cong W\},\quad \overline{C}_W =\overline{U}\cap  \mathcal{C}_W.$$  Then $$D_W=\{\alpha\in D\mid M_1^\alpha \boxtimes_{V} \cdots\boxtimes_V M_k^\alpha \in \overline{C}_W\}.$$

As in Section \ref{S:3.4}, set $S_W= D/D_W$. For any $s=\alpha+D_W \in S_W$,  define $$W^s= M_1^\alpha \boxtimes_{V} \cdots\boxtimes_V M_k^\alpha\boxtimes_V W.$$  Then  $$T_\sigma(W^s) \cong U_\alpha \boxtimes_{V^{\otimes k}} T_\sigma(W).$$   By the associativity  and commutativity of the fusion products, the isomorphism class of $W^s$ is independent of the choice of the representative $\alpha$ in $s$ and  $D_W=D_{W^s}$. In particular, $W^s\cong W$ if and only if $s= D_W$.

Let  $s\in S_W$. For each $\alpha\in D_W=D_{W^s}$, fix a $V^{\otimes k}$-intertwining operator $$I^\alpha_s(\cdot, z),  \text{of}\  \text{type} \ \binom{T_\sigma(W^s)}{U_\alpha \quad  T_\sigma(W^s)}.$$ We choose  $I^0_s(\cdot, z)$ to be the vertex operator on the $\sigma$-twisted $V^{\otimes k}$-module $T_\sigma(W^s)$.   Then there is a $2$-cocycle $\lambda^t_s(\cdot , \cdot)$  such that 
\[
\langle \nu, I^\alpha_s(x^\alpha, z_1)I^\beta_s(x^\beta,z_2)w\rangle
= \lambda^t_s(\alpha, \beta) \langle \nu, I_s^{\alpha+\beta}( Y_{V_D}(x^\alpha, z_0)x^\beta,z_2)w\rangle|_{z_0=z_1-z_2}, 
\] 
where $x^\alpha\in U_\alpha, x^\beta\in U_\beta, w\in T_\sigma(W^s)$ and $\nu \in (T_\sigma(W^s))^*$.

We use the superscript $t$ to emphasize that $\lambda_s^t$ is defined using intertwining operators for $\sigma$-twisted $V^{\otimes k}$-modules; this distinguishes it from the untwisted cocycles $\lambda_s$ appearing in the induction theory of Section \ref{S:3.4}.

Let 
\[
1 \to \C^* \to \widehat{D^t_W} \to D_W \to 1 
\]
be a central extension of $D_W$ with respect to the 2-cocycle $\lambda^t_s(\ ,\ )$.

Let $A^t_W$ be a subgroup of $D_W$ such that $\widehat{A^t_W}$ is a maximal abelian subgroup of $\widehat{D^t_W}$.  
For a linear character $\chi$ of $\widehat{A^t_W}$, let $F_\chi$ denote the corresponding irreducible $\widehat{A^t_W}$-module, and define 
$$T_\sigma(W)_\chi := F_\chi\otimes T_\sigma(W),$$ viewed as a twisted module over the subextension
$$
U_{A_W^t}:=\bigoplus_{\alpha\in A_W^t}U_\alpha \ \subseteq\ U.$$ Define the induced module
\[
X(T_\sigma(W)_\chi) := U\boxtimes_{U_{A^t_W}}T_\sigma(W)_\chi.  
\]
Now we state the main result on the structure of induced modules. 
\begin{thm} \label{thm4.2}
Assume $\sigma=(1\,2\,\cdots\,k)\in A_D$ and keep the notation above.

\begin{enumerate}[label=\textnormal{(\arabic*)}]
    \item The induced module $X\left(T_\sigma(W)_\chi\right)$ is an irreducible $g$-twisted $U$-module
for some $g\in H\rtimes\langle\sigma\rangle$. 
    \item As a $U_{D_W}$-module, $X(T_\sigma(W)_\chi)$ admits a decomposition
\begin{equation}\label{eq:decompXT}
X(T_\sigma(W)_\chi)\ \cong\
\bigoplus_{s\in D/D_W}
\left(\text{Ind}_{\widehat{A_W^t}}^{\widehat{D_W^{\,t}}}\bigl(e^s\otimes F_\chi\bigr)\right)
\otimes T_\sigma(W^s),
\end{equation}
where $e^s$ denotes a one-dimensional $\widehat{A_W^t}$-module depending on the coset
$s\in D/D_W$ (transported from $s$ via the $D$-action), and
\[
\dim\left(\text{Ind}_{\widehat{A_W^t}}^{\widehat{D_W^{\,t}}}(e^s\otimes F_\chi)\right)
= |D_W/A_W^t|,
\]
which is independent of $s$.
\item  $\text{Ind}_{\widehat{A_W^t}}^{\widehat{D_W^{\,t}}}(e^s\otimes F_\chi)$ is irreducible
for each $s\in D/D_W$ by Mackey's irreducibility criterion.
\end{enumerate}
\end{thm}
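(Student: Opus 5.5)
The plan is to reduce everything to the categorical induction theory of Section~\ref{S:3.4}. We place it inside the modular tensor category $\mathcal{C}_{U^G}$, with $G=H\rtimes\langle\sigma\rangle$ solvable, so that $U^G$ is rational and $C_2$-cofinite by \cite{CM,Mi}. By the lemmas of \cite{DLXY} recalled above, $\sigma$-twisted $V^{\otimes k}$-modules are objects of $\Rep(V^{\otimes k})$. Likewise, $g$-twisted $U$-modules for $g\in G$ are objects of $\Rep(U)$, and the simple objects of $\Rep(U)$ are exactly the irreducible $g$-twisted $U$-modules. We also view $U$ as a commutative algebra in $\mathcal{C}_{V^{\otimes k}}$-type categories; more precisely, $U$ is an algebra over $V^{\otimes k}$ inside $\mathcal{C}_{U^G}$. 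We then form induction from $U_{A^t_W}$ to $U$ in the sense of Theorem~\ref{KO}, applied to the algebra $U$ over $U_{A^t_W}$.

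\textbf{Step 1.} We check that $T_\sigma(W)_\chi=F_\chi\otimes T_\sigma(W)$ is an object of $\Rep(U_{A^t_W})$. The action is defined by $Y(x^\alpha,z)(f\otimes w)=\chi$-twisted $f\otimes I^\alpha_0(x^\alpha,z)w$, where the projective action of $\widehat{A^t_W}$ on $F_\chi$ cancels the cocycle $\lambda^t_0$. This cancellation is possible because $\widehat{A^t_W}$ is abelian and we may rescale the $I^\alpha_0$ so that $\lambda^t_0|_{A^t_W}$ becomes a coboundary on the maximal abelian part. Associativity then follows from the defining relation of $\lambda^t_s$. The result is simple, since it is simple already over $V^{\otimes k}$ after tensoring with the one-dimensional $F_\chi$.

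\textbf{Step 2.} We show that $X=U\boxtimes_{U_{A^t_W}}T_\sigma(W)_\chi$ is a simple object of $\Rep(U)$, and hence, by the lemma above, an irreducible $g$-twisted module for a single $g\in G$. Restricting with Theorem~\ref{KO}(4) gives, as $V^{\otimes k}$-modules,
\[
X\cong \bigoplus_{\alpha\in D/A^t_W}U_\alpha\boxtimes_{V^{\otimes k}}T_\sigma(W)\cong\bigoplus_{\alpha\in D/A^t_W}T_\sigma(W^{\alpha+D_W})
\]
by Theorem~\ref{FP}. We obtain simplicity by computing $\mathrm{End}_U(X)$ through adjunction: it equals $\mathrm{Hom}_{U_{A^t_W}}(T_\sigma(W)_\chi,X)$. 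The only summands of $X$ isomorphic to $T_\sigma(W)$ over $V^{\otimes k}$ are those with $\alpha\in D_W$, and $\widehat{A^t_W}$ acts on them by the characters $e^s$-twists of $\chi$. The maximality of $\widehat{A^t_W}$ forces exactly one of these characters to equal $\chi$, namely the one from $\alpha\in A^t_W$, so the Hom space is one-dimensional. The element $g$ is read off from the monodromy: $V^{\otimes k}$ acts through $\sigma$-twisting, and the $D$-graded pieces pick up a character of $D$, that is, an element of $H$. This gives $g=h\sigma$, with $h$ the analogue of $\tau_W$.

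\textbf{Step 3.} We prove the decomposition (\ref{eq:decompXT}). We group the summands above according to cosets $s\in D/D_W$. For fixed $s$, the multiplicity space of $T_\sigma(W^s)$ has dimension $|D_W/A^t_W|$ and carries a projective $\widehat{D^t_W}$-action coming from the intertwining operators $I^\alpha_s$. On the subgroup $\widehat{A^t_W}$ it contains the character $e^s\otimes F_\chi$, obtained by transporting via fusion with $U_\alpha$ for $\alpha\in s$. Frobenius reciprocity (the adjunction again) identifies this multiplicity space with $\mathrm{Ind}_{\widehat{A^t_W}}^{\widehat{D^t_W}}(e^s\otimes F_\chi)$, and its dimension is $|D_W/A^t_W|$ independently of $s$.

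\textbf{Step 4.} For part (3), irreducibility follows from Mackey's criterion. The conjugates of $e^s\otimes F_\chi$ by elements of $\widehat{D^t_W}\setminus\widehat{A^t_W}$ differ by the commutator pairing of the cocycle, and this pairing is nondegenerate modulo $A^t_W$ by maximality of the abelian subgroup. Alternatively, irreducibility follows from the simplicity of $X$ together with Theorem~\ref{decompose} applied to $U\supset U_{D_W}$.

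The main obstacle is Step 2. One must check carefully that the maximality of $\widehat{A^t_W}$ makes the twisted characters pairwise distinct, which in turn needs the relation between $\lambda^t_s$ and $\lambda^t_0$ under transport by $U_\alpha$. One must also pin down which $g\in H\rtimes\langle\sigma\rangle$ occurs.
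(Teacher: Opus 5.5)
The paper gives no proof of Theorem~\ref{thm4.2}. It is stated as the twisted analogue of the induced-module results recalled in Section~\ref{S:3.4}. The only tools the paper uses nearby are the adjunction of Theorem~\ref{KO} and the identification of simple objects of $\Rep(U)$ with twisted modules, as in Lemmas~\ref{lem4.5} and~\ref{UD'}. Your outline is that transplant, and it is sound:
\begin{itemize}
\item $\mathrm{End}_U(X)\cong\mathrm{Hom}_{U_{A^t_W}}(T_\sigma(W)_\chi,X)\cong\C$, together with semisimplicity of $\Rep(U)$, gives (1);
\item Frobenius reciprocity and the count $|D_W/A^t_W|$ give (2);
\item Mackey's criterion gives (3).
\end{itemize}

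Two framework corrections. First, Theorem~\ref{KO} as stated induces only from ordinary modules of the fixed-point subalgebra, so it does not literally apply to $T_\sigma(W)_\chi$. What you need is the relative tensor product $U\boxtimes_{U_{A^t_W}}(-)\colon\Rep(U_{A^t_W})\to\Rep(U)$, with both categories taken over $\mathcal{C}_{U^G}$. Its Frobenius reciprocity holds for any algebra map in $\mathcal{C}_{U^G}$ and needs no braiding on the non-local module. This is also how ``twisted module over $U_{A^t_W}$'' in the statement must be read, since nothing forces $A^t_W$ to be $\sigma$-stable. Second, $\widehat{D^t_W}$ contains $\C^*$, so apply Mackey's criterion to a finite central extension (normalize the cocycle to root-of-unity values) or to the twisted group algebra.

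You also misplace the obstacle. Step~2 involves only $\beta\in D_W$, i.e.\ the coset $s=0$, so no transport enters. For such $\beta$, the summand $U_{\beta+A^t_W}\boxtimes_{U_{A^t_W}}T_\sigma(W)_\chi$ is $T_\sigma(W)$ with $\widehat{A^t_W}$ acting by $\chi\cdot c(\beta,\cdot)$, where $c(\alpha,\beta)=\lambda^t_0(\alpha,\beta)/\lambda^t_0(\beta,\alpha)$; this uses only commutativity of $U$ and the defining relation of $\lambda^t_0$. A maximal abelian subgroup is its own centralizer, so $c(\beta,\cdot)|_{A^t_W}\equiv 1$ forces $\beta\in A^t_W$.

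Transport is what Step~3 needs, and there you assume it without proof. Write $X_s=\mathrm{Mult}_s\otimes T_\sigma(W^s)$ for the part of $X$ over $s$, and $Y_X(x^\alpha,z)|_{X_s}=\rho_s(\alpha)\otimes I^\alpha_s(x^\alpha,z)$. Then $\rho_s$ is a projective representation attached to $\lambda^t_s$. Calling $\mathrm{Mult}_s$ a $\widehat{D^t_W}$-module requires $\lambda^t_s$ and $\lambda^t_0$ to have the same commutator pairing. To close this:
\begin{itemize}
\item Fix $\gamma\in s$. Since $U_\gamma$ is a simple current, $Y_X(x^\gamma,z)|_{X_0}=\phi_\gamma\otimes J^\gamma$, where $J^\gamma$ spans the one-dimensional space of intertwining operators of type $\binom{T_\sigma(W^s)}{U_\gamma\ T_\sigma(W)}$ and $\phi_\gamma\colon\mathrm{Mult}_0\to\mathrm{Mult}_s$ is linear.
\item Composing with $\phi_{-\gamma}$ and using associativity and simplicity of $U$ shows $\phi_\gamma$ is invertible.
\item Locality in $X$ plus associativity of intertwining operators give $\rho_s(\alpha)\phi_\gamma=\kappa_\gamma(\alpha)\phi_\gamma\rho_0(\alpha)$ for nonzero scalars $\kappa_\gamma(\alpha)$.
\item These scalars cancel in group commutators, so the two pairings coincide.
\end{itemize}
After renormalizing the $I^\alpha_s$ so that $\lambda^t_s=\lambda^t_0$, $\kappa_\gamma$ is a character of $D_W$ and $\mathrm{Mult}_s\cong\kappa_\gamma\otimes\mathrm{Mult}_0$. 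So you may take $e^s=\kappa_\gamma|_{A^t_W}$.

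Finally, your alternative route to (3) via Theorem~\ref{decompose} would need the twisted decomposition used in Lemma~\ref{TSX}, since that theorem concerns untwisted modules.
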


\subsection{Classification of twisted modules}

Let $\sigma=(1\,2\,\cdots\,k)\in A_D$ be as above.  In this subsection, we study the structure and classification of $\sigma$-twisted modules for $U$, focusing on stabilizer subgroups and induced-module construction.

Let  $X$ be an irreducible $\sigma$-twisted $U$-module.  For $h\in H$, the module $X\circ h$ is an irreducible $h^{-1}\sigma h$-twisted $U$-module. In particular, $X\circ h\ncong X$  unless $h\in C_H(\sigma)= \{h\in H\mid  h^{-1}\sigma h=\sigma\}$.
Define  $$\phi:H\longrightarrow H,\qquad \phi(h)=[h,\sigma]=h\sigma h^{-1}\sigma^{-1}.$$ Since $\sigma$ normalizes $H$ and $H$ is abelian, $\phi$ is a group homomorphism.  Note that 
 $\ker \phi = \{h\in H\mid [h,\sigma]=1\}= C_H(\sigma)$.  

Now let $X$ be an irreducible $\sigma$-twisted module of $U$ that contains  $T_\sigma(W)$ as a $\sigma$-twisted $V^{\otimes k}$-module. Define the stabilizer of $X$ under the action of $H$ by 
$$S_X=\{ g\in H\mid X\circ g\cong X \}. $$ Then $S_X$ is a subgroup of the centralizer $ C_H(\sigma)=\{h\in H\mid h^{-1}\sigma h=\sigma\}.$ 
Let $D'$ and $D''$ be subcodes of $D$  such that $$U_{D'}=  U^{C_H(\sigma)} \  \text{and} \ U_{D''}= U^{S_X}.$$  Since $S_X\subset C_H(\sigma),$ it follows that $D'\le D''$. 

\begin{lemma}\label{D'}
We have $D'=(1-\sigma)D$.
\end{lemma}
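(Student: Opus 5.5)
This is a duality statement for the finite abelian groups $H$ and $D=\irr(H)$. The plan is to compute $C_H(\sigma)$ explicitly and then take its annihilator in $D$.

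First, fix the action of $\sigma$ on $D$. Since $\sigma\in A_D$, it permutes the summands $U_\alpha$. Write $\sigma(U_\alpha)=U_{\sigma\alpha}$, where $\sigma$ acts on the lifted $\tilde\sigma$ and $\alpha\mapsto\sigma\alpha$ is a group automorphism of $D$. For $h\in H$ and $u\in U_\alpha$ we have
\[
\sigma^{-1}h\sigma\, u=\alpha'(h)\,u,
\]
where $\alpha'$ is the transform of $\alpha$ under the conjugation action. Concretely, $\sigma h\sigma^{-1}$ acts on $U_{\sigma\alpha}$ by the scalar $\alpha(h)$. So the action of $\langle\sigma\rangle$ on $H$ by conjugation and its action on $D$ by permuting the graded pieces are dual: $(\sigma\cdot\alpha)(h)=\alpha(\sigma^{-1}h\sigma)$. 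The exact side convention does not matter for the final statement.

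Next, characterize $D'$. Because $U^{C_H(\sigma)}=\bigoplus\{U_\alpha : \alpha|_{C_H(\sigma)}=1\}$, we get
\[
D'=C_H(\sigma)^{\perp}:=\{\alpha\in D\mid \alpha(h)=1 \text{ for all } h\in C_H(\sigma)\}.
\]
Now $C_H(\sigma)=\ker\phi$ with $\phi(h)=h\sigma h^{-1}\sigma^{-1}=h\,(\sigma h\sigma^{-1})^{-1}$, written multiplicatively.

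For the containment $(1-\sigma)D\subseteq D'$: take $\beta=\alpha-\sigma\alpha$ and $h\in C_H(\sigma)$. Then
\[
\beta(h)=\alpha(h)\,\alpha(\sigma^{-1}h\sigma)^{-1}=\alpha(h)\,\alpha(h)^{-1}=1 .
\]

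For equality, compare orders using Pontryagin duality. We have $|D'|=|H|/|C_H(\sigma)|=|\phi(H)|$. On the other side, $|(1-\sigma)D|=|D|/|\ker(1-\sigma)|=|D|/|D^{\sigma}|$. It therefore suffices to show $|D^\sigma|=|C_H(\sigma)|$. This holds because $D^\sigma=\{\alpha : \alpha\circ(1-\sigma_H)=1\}$ is the annihilator of $\mathrm{Im}(1-\sigma_H)=\phi(H)$. Hence $|D^\sigma|=|H|/|\phi(H)|=|\ker\phi|=|C_H(\sigma)|$. Equivalently, $(\ker\phi)^\perp=\mathrm{Im}(\phi^*)$, and $\phi^*=1-\sigma$ on $D$.

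The only real obstacle is the first step. One must check carefully that the permutation action of $\sigma$ on $D$, defined by $U_\alpha\circ\sigma$, is exactly the dual of conjugation on $H$ by the lift $\tilde\sigma$. One must also check that this does not depend on the choice of lift. Changing the lift by an element of $H$ does not change conjugation on the abelian group $H$, so the choice is harmless. Once this duality is fixed, everything else is finite abelian group duality.
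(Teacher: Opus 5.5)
Your proof is correct. It follows the same Pontryagin-duality framework as the paper, but it closes the argument differently.

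The first inclusion, $(1-\sigma)D\subseteq D'$, is the same as in the paper. The paper phrases it as "$g|_{U_\alpha}=g|_{U_{\sigma\alpha}}$ for $g\in C_H(\sigma)$", which uses silently the compatibility $(\sigma\alpha)(h)=\alpha(\sigma^{-1}h\sigma)$ that you verify explicitly.

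For the reverse inclusion, the paper argues directly rather than by counting. Take any $h\in H$ that is trivial on $(1-\sigma)D$, i.e. a character of $D/(1-\sigma)D$. Then $\alpha(h)=(\sigma\alpha)(h)=\alpha(\sigma^{-1}h\sigma)$ for all $\alpha$, so $h\in C_H(\sigma)$. Hence $((1-\sigma)D)^{\perp}\subseteq C_H(\sigma)$, and taking annihilators gives $D'=C_H(\sigma)^{\perp}\subseteq (1-\sigma)D$.

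You instead match orders. You compute $|D'|=|H/C_H(\sigma)|$ and $|(1-\sigma)D|=|D|/|D^\sigma|$, and then get $|D^\sigma|=|C_H(\sigma)|$ from $D^\sigma=\phi(H)^{\perp}$ and $|\ker\phi|\,|\mathrm{Im}\,\phi|=|H|$. This is valid but more roundabout: it passes through $\phi$ and an extra annihilator computation, whereas the paper needs only the double-annihilator property. What your version adds is an explicit treatment of the side conventions and of independence from the choice of lift $\tilde\sigma$, which the paper leaves implicit.

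Your closing remark $(\ker\phi)^{\perp}=\mathrm{Im}(\phi^*)$ is already a complete one-line proof. One small correction: under your convention $(\sigma\alpha)(h)=\alpha(\sigma^{-1}h\sigma)$, the dual map is $\phi^*=1-\sigma^{-1}$ rather than $1-\sigma$. This is harmless, since $(1-\sigma^{-1})D=\sigma^{-1}(1-\sigma)D=(1-\sigma)D$, because $(1-\sigma)D$ is $\sigma$-stable.
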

\begin{proof} Recall that $D'$ is defined by $U_{D'}=  U^{C_H(\sigma)}$. Let $g\in C_H(\sigma)$. Then $g|_{U_{\alpha}} =g|_{U_{\sigma\alpha}}$ for any $\alpha\in D$. It implies $g|_{U_{(1-\sigma)\alpha}} = 1$. Therefore, 
$(1-\sigma)\alpha \in D'$ and $D'\supset (1-\sigma)D$.  On the other hand, for any coset $\alpha +(1-\sigma)D \in D/ (1-\sigma)D$, we have $$\sigma \alpha+(1-\sigma)D =\alpha +(1-\sigma)D. $$ Therefore, 
every character of $D/ (1-\sigma)D$ lifts to an element in $C_H(\sigma)$. Hence  
$D' \subset (1-\sigma) D$. Therefore  $D'=(1-\sigma)D$.    
\end{proof}

\begin{lemma} \label{TSX}
Let $N$ be an irreducible $\sigma$-twisted $U_{D''}$-submodule of $X$ that contains  $T_\sigma(W)$. Then  $N\cong T_\sigma(W)$ as $\sigma$-twisted $V^{\otimes k}$-modules.   
\end{lemma}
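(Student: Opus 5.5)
Here $U_{D''}=U^{S_X}$ with $S_X\subseteq C_H(\sigma)$, and $N$ is an irreducible $\sigma$-twisted $U_{D''}$-submodule of $X$ containing $T_\sigma(W)$. The plan is to view $U_{D''}$ as a $D''$-graded simple current extension of $V^{\otimes k}$ and to show that $N$ has only one homogeneous component as a $V^{\otimes k}$-module. As a $\sigma$-twisted $V^{\otimes k}$-module, $N$ decomposes (by the analogue of Theorem \ref{decompose} for twisted modules, or by the discussion preceding Theorem \ref{thm4.2}) as
\[
N\cong\bigoplus_{s\in D''/(D''\cap D_W)} Q^s\otimes T_\sigma(W^s).
\]
Here $Q^s$ are irreducible projective representations of $D''\cap D_W$, and $U_\alpha\boxtimes T_\sigma(W)\cong T_\sigma(W^s)$ by Theorem \ref{FP}. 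The statement therefore reduces to two claims: (i) $D''\subseteq D_W$, so that every $W^s\cong W$; and (ii) the multiplicity space is one-dimensional, i.e.\ the cocycle $\lambda^t$ is trivial on $D''$ up to coboundary and $Q$ is a character.

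For (i), I would use the $H$-action. Each $h\in H$ acts on $U_\alpha$ by the scalar $\alpha(h)$. For $h\in S_X$ there is an isomorphism $\phi_h:X\to X\circ h$, which is a linear map on $X$ with $\phi_h Y_X(u,z)=Y_X(hu,z)\phi_h$. Because $h$ is trivial on $V^{\otimes k}$, each $\phi_h$ is a $V^{\otimes k}$-endomorphism and so preserves isotypic components. The maps $\phi_h$ give a projective action of $S_X$ on $X$ that commutes with $U_{D''}=U^{S_X}$. By a Schur–Weyl/double-commutant argument (as in the proof of Theorem \ref{decompose}), $X=\bigoplus_\mu X_\mu\otimes N_\mu$, where the $N_\mu$ are irreducible $U_{D''}$-modules. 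Within $N$, the components $U_\alpha\cdot T_\sigma(W)$ for $\alpha\in D''$ must lie in the same $\phi$-isotypic piece. Conversely, components $T_\sigma(W^s)$ with $s\neq 0$ are moved by suitable elements of $C_H(\sigma)$, and these twist by characters that distinguish the cosets. I would make this precise using Lemma \ref{D'}: since $D'=(1-\sigma)D\subseteq D''$ and $T_\sigma(W^{\sigma\alpha-\alpha})$-type shifts act trivially (because $T_\sigma(M_1\otimes\cdots\otimes M_k)$ only depends on the fusion product $M_1\boxtimes\cdots\boxtimes M_k$), one gets $(1-\sigma)D\subseteq D_W$.

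For the remaining part of $D''$ and for claim (ii), I would use a counting argument. The group $H/S_X$ acts freely on the $H$-orbit of $X$, and the total multiplicity of $T_\sigma(W)$ across the whole $G$-orbit is controlled by the induced-module decomposition \eqref{eq:decompXT}. Comparing dimensions, $|D_W/A_W^t|$ against $|S_X|$, should force $N$ to have a single $V^{\otimes k}$-component of multiplicity one.

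The main obstacle is step (ii): showing that the projective multiplicity space $Q$ collapses to dimension one, equivalently that $D''$ is contained in a subgroup on which $\lambda^t$ is a coboundary (contained in some $A_W^t$). My first attempt would be to identify $S_X$ with the annihilator of a maximal isotropic subgroup of $\widehat{D^t_W}$ through the commutator pairing of $\lambda^t$, and then to argue that $D''=S_X^\perp$ is that isotropic subgroup. Then $N=F_\chi\otimes T_\sigma(W)$, which is isomorphic to $T_\sigma(W)$ as a $V^{\otimes k}$-module.
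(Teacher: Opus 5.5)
Your reduction is sound. Since $U_\alpha\boxtimes_{V^{\otimes k}}T_\sigma(W)\cong T_\sigma(W^{s})$, the lemma amounts to two claims: (i) $D''\subseteq D_W$, and (ii) the commutator pairing $c(\alpha,\beta)=\lambda^t(\alpha,\beta)\lambda^t(\beta,\alpha)^{-1}$ is trivial on $D''$. The gap is that neither claim is actually proved.

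\textbf{Step (i).} You only obtain $D'=(1-\sigma)D\subseteq D_W$, and $D'\le D''$ is in general a proper inclusion; this is exactly what $j=|D''/D'|$ measures in Theorem \ref{thm:perm_induction}. Your remark that components $T_\sigma(W^s)$ with $s\neq 0$ are ``moved'' by elements of $C_H(\sigma)$ cannot be right. $H$ fixes $V^{\otimes k}$ pointwise, so, as you note yourself, each $\phi_h$ preserves every isotypic component.

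\textbf{Step (ii).} You correctly identify this as the crux, but your proposed identification of $D''=S_X^{\perp}$ with a \emph{maximal} isotropic subgroup for $c$ is false. Take the multiplicity-$2$ example at the end of the paper. There $U$ is holomorphic, so $S_X=C_H(\sigma)$ and $D''=D'$, which has order $2^6$. On the other hand $D_W=D$ has order $2^8$ and $X\cong \C^2\otimes T_\sigma(W)$. So the radical of $c$ has index $4$ in $D_W$, and maximal isotropic subgroups have order $2^7$. The unspecified counting of $|D_W/A_W^t|$ against $|S_X|$ does not repair this.

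\textbf{Missing idea.} What is missing is an exact computation of $S_X$.
\begin{itemize}
\item Write the $T_\sigma(W)$-isotypic part of $X$ as $Q\otimes T_\sigma(W)$, with $Q$ an irreducible $\C^{\lambda^t}[D_W]$-module, and let $Z\le D_W$ be the radical of $c$.
\item Twisting by $h\in H$ replaces $Q$ by $Q\otimes h|_{D_W}$. This is isomorphic to $Q$ if and only if $h|_Z=1$, because irreducible modules are classified by their central characters on $\widehat{Z}$.
\item If $h|_Z=1$, then $X$ and $X\circ h$ contain isomorphic $U_{D_W}$-submodules. Both are therefore the module induced from that submodule, hence isomorphic in $\mathrm{Rep}(U)$, and this forces $h^{-1}\sigma h=\sigma$.
\end{itemize}
Hence $Z^{\perp}\subseteq C_H(\sigma)$, so $S_X=Z^{\perp}$ and $D''=Z$. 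This subgroup lies in $D_W$ and is isotropic, which gives (i) and (ii) at once. It also shows that $D''$ is maximal isotropic only when $c$ is trivial.

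\textbf{Paper's route.} The paper argues more structurally. It decomposes $X=\bigoplus_\lambda W_\lambda\otimes X_\lambda$ under the projective $S_X$-action (Miyamoto--Tanabe, Dong--Ren--Xu). It then argues that no nontrivial element of $H/S_X$ stabilizes $X_\lambda$, and concludes by the same Clifford theory that $X_\lambda$ remains irreducible over $U_{D''}^{H/S_X}=V^{\otimes k}$.
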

    
\begin{proof}
Since $S_X$ stabilizes $X$, it acts projectively on $X$ with a $2$-cocycle $\alpha_X$. 
By \cite{MT} (see also \cite{DRX}), $X$ admits a decomposition
 \[
 X= \bigoplus_{\lambda\in \Lambda_{S_X,\alpha_X}} W_\lambda\otimes X_\lambda,
 \]
 where $\Lambda_{S_X,\alpha_X}$ denotes the set of all irreducible projective characters of $S_X$ with the $2$-cocycle $\alpha_X$, $W_\lambda$ is the corresponding projective $S_X$-module and each $X_\lambda$ is an irreducible $\sigma$-twisted $U_{D''}$-module.   

By the definition of $S_X$, we have  $X\circ h \ncong X$ for any $h\in H\setminus S_X$. It follows that $X_\lambda\circ h \ncong X_\lambda$ for all  $\lambda\in \Lambda_{S_X,\alpha_X}$  and $h\in H\setminus S_X$ 
(see \cite[Theorem 2]{MT} or \cite[Theorem 3.2]{DRX}). This implies that $X_\lambda$ is still irreducible as a twisted module for $V^{\otimes k}= U_{D''}^{H/S_X}$.  Then $X_\lambda \cong  T_\sigma(W)$ as desired.  
\end{proof}   
 \medskip

By Lemma \ref{TSX},   we have $D'\le D''\le D_W$ where $$D_W=\{\alpha \in D \mid U_\alpha \boxtimes_{V^{\otimes k}} T_\sigma(W) \cong T_{\sigma}(W)\} $$ for $W\in \irr(V)$.  Now consider the submodule  
\[
U_{D_W} \boxtimes_{V^{\otimes k}} T_{\sigma}(W) =\oplus_{\alpha\in D_W} U_\alpha \boxtimes_{V^{\otimes k}}  T_{\sigma}(W) = |D_W|\cdot  T_{\sigma}(W). 
\] 
It can be viewed as an object in $\mathrm{Rep}(U_{D_W})$.   Therefore, 
it is a sum of irreducible $g_i$-twisted  modules for some $g_i$'s in  $G=H\rtimes\langle \sigma\rangle$. In general, it may involve more than one $g_i$,  and it may not contain a $\sigma$-twisted $U_{D_W}$-module.

\begin{lemma} \label{lem4.5}
Every irreducible $\sigma$-twisted $U_{D_W}$-module containing $T_{\sigma}(W)$ as a submodule can be realized in $U_{D_W} \boxtimes_{V^{\otimes k}} T_{\sigma}(W)$.
\end{lemma}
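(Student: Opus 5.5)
Let $N$ be an irreducible $\sigma$-twisted $U_{D_W}$-module containing $T_\sigma(W)$. The idea is to use the induction--restriction adjunction of Theorem \ref{KO}, applied to the extension $V^{\otimes k}\subset U_{D_W}$. Here $V^{\otimes k}=U_{D_W}^{K}$, where $K\le H$ is the group of characters of $D$ restricted to $D_W$ (equivalently $\irr(D_W)$). We work inside $\mathrm{Rep}(U_{D_W})$, regarded as a subcategory of $\mathcal{C}_{V^{\otimes k}}$ for the rational $C_2$-cofinite orbifold $(V^{\otimes k})^{\langle\sigma\rangle}$. In this category $T_\sigma(W)$ and $N$ are objects by the lemmas of Section 3.2. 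More precisely, we use the base algebra $(V^{\otimes k})^{\langle\sigma\rangle}=U_{D_W}^{K\rtimes\langle\sigma\rangle}$, with $\sigma$ normalizing $K$ because $D_W$ is $\sigma$-stable. The free functor $F_{V^{\otimes k}}^{U_{D_W}}(M)=U_{D_W}\boxtimes_{V^{\otimes k}}M$ is then the local induction from $V^{\otimes k}$-twisted objects. Its key property is that it is left adjoint to restriction, which is the analogue of Theorem \ref{KO}(2) for the intermediate commutative algebra $U_{D_W}$ over $V^{\otimes k}$ in $\mathcal{C}_{(V^{\otimes k})^{\langle\sigma\rangle}}$ (by \cite{KO,CKM}).

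\textbf{Step 1.} Since $N$ contains $T_\sigma(W)$, the restriction $N|_{V^{\otimes k}}$ has a nonzero $V^{\otimes k}$-morphism $T_\sigma(W)\to N$. By adjunction this gives a nonzero $U_{D_W}$-morphism (in $\mathrm{Rep}(U_{D_W})$)
\[
U_{D_W}\boxtimes_{V^{\otimes k}}T_\sigma(W)\longrightarrow N .
\]
Since $N$ is simple in $\mathrm{Rep}(U_{D_W})$, this map is surjective.

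\textbf{Step 2.} $\mathrm{Rep}(U_{D_W})$ is semisimple (it is a fusion category), so the surjection splits. Hence $N$ is isomorphic to a direct summand of $U_{D_W}\boxtimes_{V^{\otimes k}}T_\sigma(W)$, i.e. $N$ is realized there. By the computation before the lemma, this object is $|D_W|\cdot T_\sigma(W)$ as a $V^{\otimes k}$-module, so the realization is compatible with the $V^{\otimes k}$-structure. The $U_{D_W}$-structure on the summand is the one inherited from the induced object. By the lemmas of Section 3.2, its simple summands are $g_i$-twisted modules, and $N$ is among those with $g_i=\sigma$.

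\textbf{Main obstacle.} The hard part is justifying the adjunction in this twisted setting. Theorem \ref{KO} is stated for $V$ over $V^G$, not for an intermediate pair $V^{\otimes k}\subset U_{D_W}$ with a twisted module $T_\sigma(W)$ that is not an untwisted $V^{\otimes k}$-module. The plan is to handle this by working in $\mathcal{C}_{U^G}$ with $G=H\rtimes\langle\sigma\rangle$, where $U^G=(V^{\otimes k})^{\langle\sigma\rangle}$ and $U$, $U_{D_W}$, $V^{\otimes k}$ are all commutative algebras. The steps are:

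1. Realize $T_\sigma(W)$ as an object of $\mathrm{Rep}(V^{\otimes k})$.
2. Apply Frobenius reciprocity for the algebra $U_{D_W}$ in the category of $V^{\otimes k}$-modules in $\mathrm{Rep}(V^{\otimes k})$, in the form $\mathrm{Hom}_{U_{D_W}}(U_{D_W}\boxtimes_{V^{\otimes k}}M,N)\cong\mathrm{Hom}_{V^{\otimes k}}(M,N)$ from \cite{KO,CKM}.

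A fallback is to avoid categorical adjunction altogether. In that case one uses the intertwining operator $Y_N|_{U_\alpha\otimes T_\sigma(W)}$ for each $\alpha\in D_W$ to get maps $U_\alpha\boxtimes T_\sigma(W)\to N$ from the universal property of fusion products. Summing these gives the morphism in Step 1, and associativity of $Y_N$ shows it is a $U_{D_W}$-homomorphism.
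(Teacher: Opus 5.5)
Your argument is correct and is essentially the paper's proof. The paper likewise applies the induction--restriction adjunction to get $\dim\mathrm{Hom}_{U_{D_W}}\bigl(U_{D_W}\boxtimes_{V^{\otimes k}}T_\sigma(W),X\bigr)=\dim\mathrm{Hom}_{V^{\otimes k}}\bigl(T_\sigma(W),X\bigr)=n\neq 0$ for $X\cong\C^n\otimes T_\sigma(W)$, then uses semisimplicity (also noting that $X$ occurs exactly $n$ times); your extra discussion of why the adjunction holds for the intermediate pair $V^{\otimes k}\subset U_{D_W}$ over the base $(V^{\otimes k})^{\langle\sigma\rangle}$, with $T_\sigma(W)$ a non-local module, addresses a point the paper takes for granted.
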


\begin{proof}
Let $X$ be an irreducible $\sigma$-twisted module of $U_{D_W}$ that contains  $T_\sigma(W)$.  
Then $X\cong \C^n \otimes T_\sigma(W)$ as a $\sigma$-twisted $V^{\otimes k}$-module and we have 
\[
\dim\left( \mathrm{Hom}_{U_{D_W}}\left( U_{D_W} \boxtimes_{V^{\otimes k}} T_{\sigma}(W),X\right)\right)  =
\dim \left(\mathrm{Hom}_{V^{\otimes k}} \left(T_{\sigma}(W), X \right)\right)=n \neq 0.  
\] 
Hence,  $U_{D_W} \boxtimes_{V^{\otimes k}} T_{\sigma}(W)$ contains a submodule isomorphic to $X$.  In fact, $U_{D_W} \boxtimes_{V^{\otimes k}} T_{\sigma}(W)$ contains exactly $n$-copies of $X$. 
\end{proof}

\begin{lemma} \label{UD'}
   Let $T$ be an irreducible $\sigma$-twisted module of $U_{D'}$ containing  $T_{\sigma}(W)$. Then $T\cong T_\sigma(W)$ as a $\sigma$-twisted $V^{\otimes k}$-module. Moreover, 
   \[
   U_{D'} \boxtimes_{V^{\otimes k}} T_{\sigma}(W) \cong  \bigoplus_{h\in H/C_H(\sigma)} T\circ h
   \]
   as an object in  $\mathrm{Rep}(U_{D'})$. 
\end{lemma}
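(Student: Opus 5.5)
The plan has three steps: show $T$ restricts irreducibly to $V^{\otimes k}$, count copies of $T$ in the fusion product, and identify the remaining summands as $H$-translates of $T$.

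\textbf{Step 1: $T$ is irreducible over $V^{\otimes k}$.} Recall $U_{D'}=U^{C_H(\sigma)}$ by Lemma \ref{D'}. Hence the group acting on $U_{D'}$ with fixed points $V^{\otimes k}$ is $H/C_H(\sigma)$, and its character group is identified with $D'$.

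Take any $h\in H$. Then $T\circ h$ is an irreducible $h^{-1}\sigma h$-twisted $U_{D'}$-module. Here the twist is taken modulo the kernel $C_H(\sigma)$ of the action on $U_{D'}$. So $T\circ h$ is again $\sigma$-twisted if and only if $\phi(h)=[h,\sigma]$ acts trivially on $U_{D'}$.

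I will argue that for $h\notin C_H(\sigma)$, the elements $h^{-1}\sigma h$ and $\sigma$ restrict to different automorphisms of $U_{D'}$. Indeed, $[h,\sigma]\in H$ acts on $U_\beta$ for $\beta\in D'=(1-\sigma)D$ by the character value $\beta([h,\sigma])$. Writing $\beta=(1-\sigma)\alpha$, this value is $\alpha(h\sigma h^{-1}\sigma^{-1})$ up to the $\sigma$-action on characters, that is, $\alpha(h)/\alpha(\sigma h\sigma^{-1})$ or its inverse. This is nontrivial for suitable $\alpha$ exactly when $h\notin C_H(\sigma)$, by duality between $D/(1-\sigma)D$ and $C_H(\sigma)$, as in the proof of Lemma \ref{D'}.

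It follows that the stabilizer of $T$ in $H/C_H(\sigma)$ is trivial. Then the argument of Lemma \ref{TSX}, namely the Miyamoto--Tanabe / DRX decomposition applied to $U_{D'}$ over $V^{\otimes k}=U_{D'}^{H/C_H(\sigma)}$ with trivial stabilizer, shows that $T$ stays irreducible over $V^{\otimes k}$. Since $T$ contains $T_\sigma(W)$, we get $T\cong T_\sigma(W)$.

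\textbf{Step 2: multiplicity of $T$ in the fusion product.} By Lemma \ref{lem4.5}, applied with $D'$ in place of $D_W$ (the argument is identical, using Theorem \ref{KO}(2)), we have
\[
\dim \mathrm{Hom}_{U_{D'}}\bigl(U_{D'}\boxtimes_{V^{\otimes k}}T_\sigma(W),Y\bigr)=\dim \mathrm{Hom}_{V^{\otimes k}}\bigl(T_\sigma(W),Y\bigr)
\]
for every simple object $Y$ of $\mathrm{Rep}(U_{D'})$. For $Y=T$ this dimension is $1$. The same holds for each $T\circ h$, since $(T\circ h)|_{V^{\otimes k}}=T|_{V^{\otimes k}}$ because $h$ fixes $V^{\otimes k}$ pointwise. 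By Step 1 the modules $T\circ h$ for $h\in H/C_H(\sigma)$ are pairwise non-isomorphic: they are twisted by distinct automorphisms of $U_{D'}$, and the lemma from \cite{DLXY} separates non-isomorphic twisted modules in $\mathrm{Rep}(U_{D'})$.

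\textbf{Step 3: counting.} By semisimplicity of $\mathrm{Rep}(U_{D'})$, the direct sum $\bigoplus_{h} T\circ h$ embeds into $U_{D'}\boxtimes_{V^{\otimes k}}T_\sigma(W)$. Compare sizes via restriction to $V^{\otimes k}$ using Theorem \ref{KO}(4). The left side restricts to $|H/C_H(\sigma)|$ copies of $T_\sigma(W)$. The right side restricts to
\[
U_{D'}\boxtimes_{V^{\otimes k}}T_\sigma(W)=\bigoplus_{\beta\in D'}U_\beta\boxtimes_{V^{\otimes k}} T_\sigma(W),
\]
which is $|D'|$ copies of $T_\sigma(W)$ because $D'\le D_W$. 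Finally $|D'|=|(1-\sigma)D|=|D|/|D^\sigma|=|H|/|C_H(\sigma)|$, using $|C_H(\sigma)|=|D/(1-\sigma)D|=|\ker(1-\sigma)|$. The two sides have the same size, so equality holds.

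\textbf{Main obstacle.} The delicate point is Step 1: showing that the twists $h^{-1}\sigma h$ are genuinely distinct on $U_{D'}$, and not merely on $U$. This is where the precise identification $D'=(1-\sigma)D$ must be used, together with the perfect pairing between $D/(1-\sigma)D$ and $C_H(\sigma)$.
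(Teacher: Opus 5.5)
Your Steps 2 and 3 are the paper's counting argument: adjunction from Theorem \ref{KO}(2), multiplicity one for each $T\circ h$, and $|D'|=|H/C_H(\sigma)|$. Step 1, however, contains a genuine error, and Step 2 inherits it.

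The claim that $h^{-1}\sigma h$ and $\sigma$ restrict to different automorphisms of $U_{D'}$ for every $h\notin C_H(\sigma)$ is false in general. In your computation you replaced $\beta([h,\sigma])=((1-\sigma)\alpha)([h,\sigma])$ by $\alpha([h,\sigma])$, dropping one factor of $1-\sigma$. The quantity $\alpha(h)/\alpha(\sigma h\sigma^{-1})$ is the value at $h$ of an element of $D'$. Its nontriviality for some $\alpha$ only says that $H/C_H(\sigma)$ acts faithfully on $U_{D'}$.

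The correct condition for $h^{-1}\sigma h|_{U_{D'}}=\sigma|_{U_{D'}}$ is that $[h,\sigma]$ lies in $C_H(\sigma)$, the kernel of the action of $H$ on $U_{D'}=U^{C_H(\sigma)}$. Writing $H$ additively with $\sigma$ acting by conjugation, this says $h\in\ker(1-\sigma)^2$. That can be strictly larger than $C_H(\sigma)=\ker(1-\sigma)$.

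Two examples show this:
\begin{itemize}
\item Take $k=2$ and $U=(V\oplus J)^{\otimes 2}$ with $J$ an order-two simple current of integral weight. Then $D=\Z_2^2$, $D'=\{00,11\}$, and $\sigma$ acts trivially on $D'$. So every $h\in H$ satisfies $h^{-1}\sigma h|_{U_{D'}}=\sigma|_{U_{D'}}$, although $[H:C_H(\sigma)]=2$.
\item The same happens in the paper's $8$-cycle example. There $\sigma$ interchanges $(10101010)$ and $(01010101)$ in $C'$. Hence it fixes the nontrivial character of $C'$ that is $1$ on $(11111111)$ and $-1$ on $(10101010)$.
\end{itemize}
So $T\circ h$ can again be a $\sigma$-twisted $U_{D'}$-module with $h\notin C_H(\sigma)$, and nothing in your argument excludes $T\circ h\cong T$. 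Consequently neither the triviality of the stabilizer (hence $T\cong T_\sigma(W)$) nor the pairwise non-isomorphism in Step 2 is established.

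The missing idea is to determine the stabilizer where $H$ acts faithfully, that is, for a twisted module of the full algebra $U$, and only then pass to $U_{D'}$. This is how the paper argues: it obtains $T\cong T_\sigma(W)$ from Lemma \ref{TSX} via $D'\le D''$. For an irreducible $\sigma$-twisted $U$-module $X$, the module $X\circ h$ is twisted by $h^{-1}\sigma h\neq\sigma$ in $\Aut(U)$ for $h\notin C_H(\sigma)$, so $S_X\le C_H(\sigma)$. The Miyamoto--Tanabe/DRX decomposition $X=\bigoplus_\lambda W_\lambda\otimes X_\lambda$ then carries the triviality of the stabilizer down to the $U_{D''}$-components, giving irreducibility over $V^{\otimes k}$.

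Once $T$ is known to be irreducible over $V^{\otimes k}$, prove the non-isomorphism in Step 2 by Schur's lemma rather than by comparing twists. An isomorphism $T\circ h\to T$ commutes with $V^{\otimes k}$, so it is a scalar. Hence $Y_T(hu,z)=Y_T(u,z)$ for all $u\in U_{D'}$, so $h$ acts trivially on $U_{D'}$, i.e.\ $h\in C_H(\sigma)$.
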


\begin{proof}
   Since $D'\le D''$, $T\cong T_\sigma(W)$ follows from Lemma \ref{TSX}.  
For any $h\in H/C_H(\sigma)$,  $T\circ h $ is an irreducible $h^{-1}\sigma h$-twisted module of $U_{D'}$; in particular, it is an object in $\mathrm{Rep}(U_{D'})$. Then, by Theorem \ref{KO} (2), we have
\[
\dim\left( \mathrm{Hom}_{U_{D'}}\left( U_{D'} \boxtimes_{V^{\otimes k}} T_{\sigma}(W), T\circ h\right)\right)  =
\dim \left(\mathrm{Hom}_{V^{\otimes k}} (T_{\sigma}(W), T)\right)=1.  
\] 
Therefore, for all $h\in H/C_H(\sigma)$, $T\circ h$ can be viewed as a submodule of $U_{D'} \boxtimes_{V^{\otimes k}} T_{\sigma}(W)$ with multiplicity $1$ and we have 
\[
 U_{D'} \boxtimes_{V^{\otimes k}} T_{\sigma}(W) \supset \bigoplus_{h\in H/C_H(\sigma)} T\circ h.
\]
Since $|D'| = |H/ C_H(\sigma)|$, the equality holds and we have the desired result.    
\end{proof}

Note also that  $T$ is the unique irreducible $\sigma$-twisted module in  $U_{D'} \boxtimes_{V^{\otimes k}} T_{\sigma}(W)$
since  $h^{-1}\sigma h =\sigma$ implies $h\in C_H(\sigma)$.

\begin{lemma}
Let $T\subset X$ be an irreducible $\sigma$-twisted submodule of $U_{D''}$. Then for any $1\neq g\in C_H(\sigma)/S_X$, we have $T\circ g \ncong T$ as $\sigma$-twisted $U_{D''}$-modules.
\end{lemma}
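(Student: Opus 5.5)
The plan is to argue by contradiction, using the decomposition of $X$ from the proof of Lemma \ref{TSX} together with the defining property of $S_X$. Let $T\subset X$ be an irreducible $\sigma$-twisted $U_{D''}$-submodule. For $g\in C_H(\sigma)$, the twisted module $T\circ g$ is again $g^{-1}\sigma g=\sigma$-twisted. Since $U_{D''}=U^{S_X}$, elements of $S_X$ act trivially on $U_{D''}$, so $T\circ g$ depends only on the coset $gS_X$ and the statement is well posed.

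Suppose $g\in C_H(\sigma)\setminus S_X$ and $T\circ g\cong T$ as $\sigma$-twisted $U_{D''}$-modules. Recall from the proof of Lemma \ref{TSX} that, by \cite{MT} and \cite[Theorem 3.2]{DRX},
\[
X=\bigoplus_{\lambda\in \Lambda_{S_X,\alpha_X}} W_\lambda\otimes X_\lambda,
\]
where the $X_\lambda$ are pairwise inequivalent irreducible $\sigma$-twisted $U_{D''}$-modules and $X_\lambda\circ h\ncong X_\lambda$ for all $h\in H\setminus S_X$. Here $X$ is an irreducible $\sigma$-twisted $U$-module and $U^{S_X}=U_{D''}$ is the fixed-point subalgebra for the group $S_X$ acting projectively on $X$. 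Then $T\cong X_\lambda$ for some $\lambda$, and $T\circ g\cong T$ directly contradicts the quoted property with $h=g$.

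The step I expect to be the main obstacle is justifying this property of the $X_\lambda$ within our setting, rather than merely quoting it. To do this directly, I would use that $X$ is generated by $T$ as a $U$-module: $X=\sum_{\alpha\in D}U_\alpha\cdot T$. As $U_{D''}$-modules, the pieces $\sum_{\alpha\in\beta+D''}U_\alpha\cdot T$ are quotients of $(U_{\beta+D''})\boxtimes_{U_{D''}}T$, which are simple currents. The aim is then to show that an isomorphism $\phi\colon T\circ g\to T$ extends to an isomorphism $X\circ g\cong X$.

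The extension is constructed by defining, for $u\in U_\alpha$ and $t\in T$, the map
\[
\widetilde\phi(Y_X(u,z)t)=g(\alpha)^{-1}Y_X(u,z)\phi(t),
\]
up to the appropriate character normalization. Well-definedness and the $U$-module property would follow from the uniqueness of intertwining operators of type $\binom{U_{\alpha}\boxtimes T}{U_\alpha\ T}$ up to scalar, since $U_\alpha$ is a simple current. Here $g\in C_H(\sigma)$ is used: $g$ commutes with $\sigma$, so the twisting types match. The resulting isomorphism $X\circ g\cong X$ gives $g\in S_X$, a contradiction. Hence $T\circ g\ncong T$ for all $1\neq g\in C_H(\sigma)/S_X$.
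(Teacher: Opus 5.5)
Your main argument is correct, but it is not the route the paper takes. You note that $T$, being an irreducible submodule of the completely reducible $U_{D''}$-module $X=\bigoplus_\lambda W_\lambda\otimes X_\lambda$, is isomorphic to some $X_\lambda$. The property $X_\lambda\circ h\ncong X_\lambda$ for all $h\in H\setminus S_X$, quoted from Miyamoto--Tanabe and Dong--Ren--Xu in the proof of Lemma \ref{TSX}, then gives the statement directly as the special case $h\in C_H(\sigma)\setminus S_X$.

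The paper instead argues from the \emph{conclusion} of Lemma \ref{TSX}. If the stabilizer of $T$ in $C_H(\sigma)/S_X$ were nontrivial, it would act projectively on $T$ and split $T$ into more than one irreducible module over the corresponding fixed-point subalgebra. That is impossible because $T\cong T_\sigma(W)$ is already irreducible over $V^{\otimes k}$. Your version shows the lemma is a literal corollary of the cited orbifold theorem. The paper's version only needs the irreducibility output of Lemma \ref{TSX}, together with the standard splitting of a module under a nontrivial stabilizing group.

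The self-contained justification you sketch in your last two paragraphs has a genuine gap: the well-definedness of $\widetilde\phi$. Uniqueness up to scalar of intertwining operators of type $\binom{U_\alpha\boxtimes T}{U_\alpha\ T}$ only controls relations inside a single piece $U_{\beta+D''}\cdot T$. You also need compatibility with linear relations \emph{between} pieces for different cosets, and these exist in general. The $|D/D''|=|S_X|$ nonzero submodules $U_{\beta+D''}\cdot T$ sum to $X$. But the length of $X$ as a $U_{D''}$-module is $\sum_\lambda\dim W_\lambda$, which is strictly smaller than $|S_X|=\sum_\lambda(\dim W_\lambda)^2$ whenever $\alpha_X$ is not a coboundary. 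This happens, for instance, in the $V_{E_8}$ example with $n=2$: there $S_X=C_H(\sigma)$ and $|D/D''|=4$, but $X\cong 2\,T_\sigma(W)$ over $V_L^{\otimes 8}$.

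These cross-coset relations encode the projective action of $S_X$. Checking that the rescaling by $\alpha(g)^{-1}$ respects them is already at least as strong as the conclusion $X\circ g\cong X$ you want, so the sketch is circular at exactly that point.

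A clean direct argument is available once Lemma \ref{TSX} is granted. Since $g$ fixes $V^{\otimes k}$ pointwise, any isomorphism $\phi\colon T\circ g\to T$ is a $V^{\otimes k}$-endomorphism of the irreducible module $T\cong T_\sigma(W)$, hence a nonzero scalar. Then $\phi\, Y_T(gu,z)=Y_T(u,z)\phi$ for $u\in U_\alpha$, $\alpha\in D''$, forces $\alpha(g)=1$, because $Y_T(u,z)\neq 0$ for $u\neq 0$. Hence $g\in\{h\in H\mid \alpha(h)=1 \text{ for all } \alpha\in D''\}=S_X$.
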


\begin{proof}
Suppose otherwise. Then  $S= \{g\mid  T\circ g\cong T\}$ is a nontrivial subgroup. Then $S$ acts projectively on $T$, and $T$ will split into a sum of more than one irreducible twisted $U_{D'}$-modules. By Lemma \ref{TSX}, we have $T\cong T_\sigma(W)$ as a twisted $V^{\otimes k}$-module, which is impossible.  
\end{proof}

We next describe the multiplicities and number of irreducible $\sigma$-twisted $U_{D_W}$-modules  that contain $T_{\sigma}(W)$ as a submodule.
 
\begin{thm} \label{thm:perm_induction}

Let $W$ be an irreducible $V$-module, and set  $d=|D_W/D'|$.
Let $j$ be the number of inequivalent irreducible $\sigma$-twisted $U_{D_W}$-modules  that contain $T_{\sigma}(W)$ as a submodule. If $j\neq 0$, then 
\begin{enumerate}[label=\textnormal{(\arabic*)}]
\item The ratio $d/j$ is a square of an integer.

\item Every irreducible $\sigma$-twisted $U_{D_W}$-module is isomorphic to $\C^n\otimes T_\sigma(W)$ as twisted module of $V^{\otimes k}$, where $n=\sqrt{d/j}$.
\item  Moreover, $j = |D''/D'| = |C_H(\sigma)/S_X|$; that is, there are exactly $|D''/D'|= |C_H(\sigma)/S_X|$ inequivalent irreducible $\sigma$-twisted $U_{D_W}$-modules containing $T_{\sigma}(W)$ as a submodule.
\end{enumerate}
\end{thm}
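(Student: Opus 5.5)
Our approach is a counting argument inside $\mathrm{Rep}(U_{D_W})$ for the object $\mathcal{M}:=U_{D_W}\boxtimes_{V^{\otimes k}}T_\sigma(W)$. We compute its $\sigma$-twisted part in two ways: via Frobenius reciprocity (Theorem \ref{KO}(2)), and via restriction through the intermediate algebra $U_{D'}$ using Lemma \ref{UD'}.

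\textbf{Step 1: a uniform multiplicity.} Let $X_1,\dots,X_j$ be the inequivalent irreducible $\sigma$-twisted $U_{D_W}$-modules containing $T_\sigma(W)$. Each restricts to $V^{\otimes k}$ as $\C^{n_i}\otimes T_\sigma(W)$. The restriction cannot contain any $T_\sigma(W^s)$ with $s\neq 0$, because $D_W$ stabilizes $T_\sigma(W)$ and $D_W\boxtimes T_\sigma(W)$ only produces copies of $T_\sigma(W)$. By Lemma \ref{lem4.5}, $X_i$ occurs in $\mathcal{M}$ with multiplicity $n_i$.

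To see that all $n_i$ are equal, we use the group $C_H(\sigma)$. It acts on $\sigma$-twisted $U_{D_W}$-modules by $X\mapsto X\circ h$, and this preserves the $V^{\otimes k}$-restriction, since $h$ acts trivially on $V^{\otimes k}$. We claim the $X_i$ form a single $C_H(\sigma)$-orbit. Indeed, each $X_i$ contains the unique irreducible $\sigma$-twisted $U_{D'}$-module $T$ from Lemma \ref{UD'}. Inducing $T$ from $U_{D'}$ to $U_{D_W}$, a Clifford/Mackey argument over the abelian extension $D_W/D'$ shows that the $\sigma$-twisted constituents are permuted transitively by $(H/\ker)$-twists that fix $\sigma$. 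This is the dual statement to $D'=(1-\sigma)D$ in Lemma \ref{D'}: characters of $D_W/D'$ lift to $C_H(\sigma)$. Hence $n_i=n$ for all $i$, and the stabilizer of $X_i$ in $C_H(\sigma)$ is $S_X$, so $j=|C_H(\sigma)/S_X|$. Under the duality between $H$ and $D$, the group $C_H(\sigma)/S_X$ corresponds to $D''/D'$, which gives (3).

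\textbf{Step 2: counting.} By Lemma \ref{UD'}, $U_{D'}\boxtimes T_\sigma(W)$ has exactly one $\sigma$-twisted summand, namely $T$. We then factor
\[
\mathcal{M}=U_{D_W}\boxtimes_{U_{D'}}\left(U_{D'}\boxtimes_{V^{\otimes k}}T_\sigma(W)\right)
\]
using the tensor functor property in Theorem \ref{KO}(3). The summands $T\circ h$ with $h\notin C_H(\sigma)$ induce to non-$\sigma$-twisted modules, so the $\sigma$-twisted part of $\mathcal{M}$ is $U_{D_W}\boxtimes_{U_{D'}}T$. Restricted to $V^{\otimes k}$, the induced object $U_{D_W}\boxtimes_{U_{D'}}T$ is $|D_W/D'|$ copies of $T_\sigma(W)$, since $T\cong T_\sigma(W)$ and fusion with each $U_\alpha$, $\alpha\in D_W$, fixes $T_\sigma(W)$. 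On the other hand, this part is $\bigoplus_i n X_i$, whose $V^{\otimes k}$-restriction is $jn^2$ copies of $T_\sigma(W)$. Therefore $d=jn^2$, which gives (1) and (2) with $n=\sqrt{d/j}$.

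\textbf{Main obstacle.} The hardest step is the transitivity and equal-multiplicity claim in Step 1, that is, identifying the twisted constituents of $U_{D_W}\boxtimes_{U_{D'}}T$ as one $C_H(\sigma)$-orbit with stabilizer $S_X$. Our first attempt will be to view $U_{D_W}$ as a $D_W/D'$-graded simple current extension of $U_{D'}$ (Proposition \ref{prop:2.3}(3)) and apply Theorem \ref{decompose} in its twisted form. That result supplies a projective representation of $D_W/D'$ whose irreducible constituents all have the same dimension, and the central character classes are exactly the characters of $D''/D'$, giving $j$ classes of common dimension $n$.
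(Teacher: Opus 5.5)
Your skeleton is the paper's. You factor $U_{D_W}\boxtimes_{V^{\otimes k}}T_\sigma(W)$ through $U_{D'}$ via Lemma \ref{UD'}, observe that $\mathcal I:=U_{D_W}\boxtimes_{U_{D'}}T$ restricts to $d$ copies of $T_\sigma(W)$, and compare with Lemma \ref{lem4.5}. You are right that this alone only gives $d=\sum_i n_i^2$; the paper's appeal to ``Lemma \ref{lem4.5} and its proof'' also passes over this. But in your write-up the two facts that actually carry the theorem are asserted, not proved.

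(i) In Step 2 you only show that the $\sigma$-twisted part of $\mathcal M$ lies inside $\mathcal I$. A priori a simple constituent $Y$ of $\mathcal I$ is only $c\sigma$-twisted for some $c\in (D_W/D')^*$. Frobenius reciprocity gives $T\subset Y$, which fixes the twist on $U_{D'}$ and nothing more. Your count $d=jn^2$, however, needs every constituent of $\mathcal I$ to be one of the $X_i$.

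(ii) That characters of $D_W/D'$ lift to $C_H(\sigma)$ only says $C_H(\sigma)$ acts on the constituents through $(D_W/D')^*$. It says nothing about transitivity. Theorem \ref{decompose} describes a single module, so it cannot compare different $X_i$. The equality of the stabilizer with $S_X$ is likewise only asserted.

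Here is one way to close these gaps. For $\beta\in D_W$, the module $U_{\beta+D'}\boxtimes_{U_{D'}}T$ is a simple $U_{D'}$-module lying over $T_\sigma(W)$. It is $\sigma$-twisted because it sits in $\mathcal I|_{U_{D'}}$, hence it is $\cong T$. So $\mathrm{End}(\mathcal I)\cong\mathrm{Hom}_{U_{D'}}(T,\mathcal I)$ is a twisted group algebra $\C^{\mu}[D_W/D']$. All its simple modules have a common dimension $m$. The multiplicity of a constituent $Y$ in $\mathcal I$ is $\dim\mathrm{Hom}_{U_{D'}}(T,Y)=n_Y$, so $n_Y=m$ for every $Y$.

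For transitivity, take $c\in (D_W/D')^*$. Then $T\circ c=T$, and multiplication by $c(\beta)$ on the summand $U_{\beta+D'}\boxtimes_{U_{D'}}T$ is an isomorphism $\mathcal I\to\mathcal I\circ c$. It induces $e_\beta\mapsto c(\beta)^{-1}e_\beta$ on $\C^{\mu}[D_W/D']$. These automorphisms permute the simple modules transitively: central characters are characters of the radical $R$ of the commutator form, and every character of $R$ extends to $D_W/D'$. Since each $c$ commutes with $\sigma$, all constituents are $C_H(\sigma)$-conjugate and share one twist. That twist is $\sigma$ once $j\neq 0$, because each $X_i$ contains $T$ and so is a constituent. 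Hence $j=|R|$, $d=jm^2$, and the stabilizer of a constituent in $(D_W/D')^*$ is $R^\perp$.

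To identify this stabilizer with $S_X$, let $X^{(0)}$ be the $T_\sigma(W)$-isotypic part of $X$. It is irreducible over $U_{D_W}$, and $X\cong U\boxtimes_{U_{D_W}}X^{(0)}$. Moreover $(X\circ h)^{(0)}=X^{(0)}\circ h$ for $h\in C_H(\sigma)$. So $S_X$ is the preimage of $R^\perp$ under $C_H(\sigma)\to (D_W/D')^*$, which gives $D''/D'=R$.

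Alternatively, the orbit theorem of \cite{DRX} gives the common twist and transitivity at once. Apply it to the abelian group $\langle (D_W/D')^*,\sigma\rangle$ acting on $U_{D_W}$, whose fixed-point algebra is $U_{D'}^{\sigma}$.
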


\begin{proof}
Let $T$ be an irreducible $\sigma$-twisted $U_{D'}$-module such that
$
T\cong T_\sigma(W)$
as $\sigma$-twisted $V^{\otimes k}$-modules.  
By Lemma \ref{UD'} and the definition of $D_W$, we have  
\[
U_{D_W}\boxtimes_{U_D'} T \cong d\cdot T_\sigma(W)  
\]
as  a $\sigma$-twisted $V^{\otimes k}$-module.
Since  $$U_{D_W} \boxtimes_{V^{\otimes k}} T_{\sigma}(W)\cong U_{D_W} \boxtimes_{U_D'} \left(U_{D'} \boxtimes_{V^{\otimes k}} T_{\sigma}(W)\right),$$ the module 
$U_{D_W}\boxtimes_{U_D'} T$ can be viewed as a submodule of  $U_{D_W} \boxtimes_{V^{\otimes k}} T_{\sigma}(W)$. 
Recall that $T$ is the unique irreducible $\sigma$-twisted $U_{D'}$-module occurring in $U_{D'} \boxtimes_{V^{\otimes k}} T_{\sigma}(W)$ with multiplicity $1$. 

Statements (1) and (2) now follow from Lemma \ref{lem4.5} and its proof,  and  (3) follows from Lemmas \ref{TSX} and \ref{lem4.5}.
\end{proof}

\section{Explicit examples}
In this section, we illustrate the general theory developed in the previous sections by several families of examples. We first consider tensor products of VOSAs and subVOAs associated with binary codes, which provide a natural source of permutation-stable simple current extensions. We then discuss an example coming from the Moonshine VOA and a $3C$-element of the Monster, and finally present an example in which the multiplicity $n$ in the induced-module construction is greater than $1$. 

\subsection{Tensor products of VOSAs and code subVOAs } \label{VOSA}

We begin with examples that arise from the tensor products of VOSAs and their code subVOAs. After recalling the basic construction, we apply the results of Sections 3 and 4 to describe irreducible twisted modules for these code subVOAs, and then work out several concrete examples. 

Let  $V=V_{\overline{0}}\oplus V_{\overline{1}}$ be a $\frac{1}{2}\mathbb Z$-graded VOSA
such that $$V_{\overline{0}} =\oplus_{n\in \Z} V_n\ \text{and}\  V_{\overline{1}}= \oplus_{n\in \frac{1}2+\Z} V_n .$$  Let $\theta$ be the canonical involution of $V$, i.e., $\theta$ acts as $1$ on $V_{\overline{0}}$ and  $-1$ on $V_{\overline{1}}$.  

For a vector $\alpha=(\overline{\alpha_1}, \dots, \overline{\alpha_k})\in \Z_2^k$, define the homogeneous subspace  $$ V^\alpha := V_{\overline{\alpha_1}} \otimes \cdots \otimes V_{\overline{\alpha_k}}\subset V^{\otimes k}.$$ Then $$ V^{\otimes k}=\bigoplus_{\alpha\in \Z_2^k}V^\alpha=\bigoplus_{(\overline{\alpha_1}, \dots, \overline{\alpha_k})\in \Z_2^k}V_{\overline{\alpha_1}} \otimes \cdots \otimes V_{\overline{\alpha_k}}, $$ 
and each $V^\alpha$ is an irreducible module over $V^0= V_{\overline{0}}\otimes \cdots \otimes V_{\overline{0}}$. Let $\mathcal{C}\subset \Z_2^k$ be an even linear subcode, and define
\[
V_\mathcal{C}=\bigoplus_{\alpha\in \mathcal{C}} V^\alpha. 
\]   
Then $V_{\mathcal{C}}$ is a vertex operator subalgebra of the even part $(V^{\otimes k})_{\overline{0}}$. 
In particular, $$(V^{\otimes k})_{\overline{0}}= V_\mathcal{E},$$ where $\mathcal{E}$ is the code consisting of all even codewords in $\Z_2^k$.  


Let $\sigma$ be a $k$-cycle and let $\mathcal{C}$ be an even binary code such that $\sigma(\mathcal{C}) =\mathcal{C} $. 
Then  $\sigma\in \Aut(V^{\otimes k})$ preserves the subVOA $V_{\mathcal{C}}$. We also use $\sigma$ to denote the restriction $\sigma|_{V_{\mathcal{C}}}$.   


Let $W$ be an irreducible $V_{\overline{0}}$-module, and define $$W_1:= V_{\overline{1}}\boxtimes _{V_{\overline{0}}} W.$$ Let $$\mathcal{V}_O =  \{ W\in \irr(V_{\overline{0}})\mid 
q(W_1)- q(W) \in \Z\}$$ and 
$$\mathcal{V}_E =  \{ W\in \irr(V_{\overline{0}})\mid 
q(W_1) - q(W)\in \frac{1}{2}+\Z\},$$
where $q(W)$ (resp.\ $q(W_1)$) denotes the conformal weight of the 
$V_{\overline 0}$-module $W$ (resp.\ $W_1$), namely the lowest $L(0)$-eigenvalue 
appearing in $W$ (resp.\ $W_1$).

By the general theory of simple current extensions  \cite{Y1,La}, 
$$
V\boxtimes_{V_{\overline{0}}} W = W \oplus W_1.
$$
We distinguish the following three cases: 
\begin{itemize}
    \item 
If $W \in \mathcal{V}_E$, then $W \ncong W_1$ as $V_{\overline{0}}$-modules, and $W \oplus W_1$ is an irreducible $V$-module. 
\item 
If $W \in \mathcal{V}_O$ and $W \ncong W_1$, then $W \oplus W_1$ is an irreducible $\theta$-twisted $V$-module.
\item If $W \in \mathcal{V}_O$ and $W \cong W_1$, then $W$ admits two inequivalent $\theta$-twisted module structures.
\end{itemize}

Recall that $\theta_1=\theta\otimes 1\otimes \cdots \otimes 1$ denotes the automorphism of $V^{\otimes k}$ acting as $\theta$ on the first tensor factor.
By Theorem \ref{thmSVOSA},  we obtain the corresponding twisted modules for $V^{\otimes k}$ and for its even part $V_{\mathcal{E}}$. More precisely:
\begin{itemize}
\item If $W\in \mathcal{V}_E$, then 
$T_\sigma(W\oplus W_1)$ is an irreducible $\theta_1^{k-1}\sigma$-twisted $V^{\otimes k}$-module, and   $T_\sigma(W)$ and $T_\sigma(W_1)$ are irreducible $\theta_1^{k-1}\sigma$-twisted modules for $V_{\mathcal{E}} =(
V^{\otimes k})_{\overline{0}}$.  

\item If $W\in \mathcal{V}_O$ and $W\ncong W_1$, then       
$T_\sigma(W\oplus W_1)$ is an irreducible $\theta_1^{k}\sigma$-twisted $V^{\otimes k}$-module, and $T_\sigma(W)$, $T_\sigma(W_1)$ are irreducible $\theta_1^{k}\sigma$-twisted $V_{\mathcal{E}}$-modules. 

\item If $W\in \mathcal{V}_O$ and $W\cong W_1$, then       
$T_\sigma(W)$ is an irreducible $\theta_1^{k}\sigma$-twisted $V^{\otimes k}$-module with two inequivalent module structures, and it is  irreducible as a  $\theta_1^{k}\sigma$-twisted $V_{\mathcal{E}}$-module. 
\end{itemize}

We now apply this to the code subVOA $V_{\mathcal{C}}\subset V_{\mathcal{E}}$. By Theorem 
\ref{thm:3.1} and  the discussion above, every irreducible $\sigma$-twisted $V_{\mathcal{C}}$-module is isomorphic, as a $V_{\overline{0}}^{\otimes k}$-module,   to  $T_\sigma(W)$ for some irreducible  $V_{\overline{0}}$-module $W$. 
Moreover, if $k$ is odd (resp., even) and $W\in  \mathcal{V}_E$ (resp.,  $W\in  \mathcal{V}_O$), then $T_\sigma(W)$ can  be viewed as an irreducible $\sigma$-twisted $V_{\mathcal{C}}$-module.

To determine how many such twisted structures occur, set  $$H=\{g\in \Aut(V_\mathcal{C})\mid g|_{V^0}=\mathrm{id}_{V^0}\}.$$ Then $H\cong \irr(\mathcal C)\cong \mathcal C^\ast $. 
Recall  from Section \ref{S:4} the group homomorphism  $$\phi: H \to  H,\qquad \phi(h)= [h, \sigma]=h\sigma h^{-1}\sigma^{-1},$$  and  set   $$K=\mathrm{Im} \phi \cong H/C_H(\sigma).$$  
If $\theta_1\in K$, then there exists an automorphism $h\in H$ such that 
$h\sigma h^{-1} = \theta_1\sigma$.  Then $ T\circ h$  is an irreducible $\sigma$-twisted $V_\mathcal{C}$-module for any irreducible $\theta_1\sigma$-twisted $V_\mathcal{C}$-module $T$.  

Since $\mathcal{C}$ is even, $V_{\overline{\alpha_1}} \boxtimes \cdots \boxtimes V_{\overline{\alpha_k}} \cong V_{\overline{0}}$ for any $\alpha=(\overline{\alpha_1},\ldots,\overline{\alpha_k})\in \mathcal{C}$.
Hence for any $W\in \mathrm{Irr}(V_{\overline{0}}),$ $$\mathcal{C}_W = \{ \alpha\in \mathcal{C}\mid V_{\overline{\alpha_1}} \boxtimes \cdots \boxtimes V_{\overline{\alpha_k}}
\boxtimes W\cong W\} =\mathcal{C}.$$ 
Let $\mathcal{C}'$ be the subgroup of $\mathcal C$ defined by $V_{\mathcal{C}'}=(V_{\mathcal{C}})^{C_H(\sigma)}$. Then Lemma \ref{D'} gives $\mathcal{C}'= (1-\sigma)\mathcal{C}$.

We now distinguish two cases.

\noindent \textbf{Case 1:} $k$ is even. 

Let $W\in  \mathcal{V}_O$. Then $T_\sigma(W)$ can be viewed as an irreducible $\sigma$-twisted module for $V_{\mathcal{C}}$. In this case,    $\theta_1\notin  K$ and thus $\theta_1\sigma$ is  not conjugate to $\sigma$ by   $H$.  Therefore, by 
Theorem \ref{thm:perm_induction}, there are exactly
$|\mathcal{C}/\mathcal{C}'|$ inequivalent irreducible $\sigma$-twisted $V_{\mathcal{C}}$-module structures on $T_\sigma(W)$.

\noindent \textbf{Case 2:} $k$ is odd.

Let $W\in  \mathcal{V}_E$. Then  $T_\sigma(W)$ can be viewed as an irreducible $\theta_1\sigma$-twisted module for $V_\mathcal{C}$.
Since $\theta_1\in K=\mathrm{Im}\phi$  when $k$ is odd, there exists  $h\in H$ such that $h \sigma h^{-1}=  \theta_1\sigma.$ Hence $T_\sigma(W)\circ h$ is an irreducible $\sigma$-twisted module for $V_\mathcal{C}$.
Again by Theorem \ref{thm:perm_induction}, there are $|\mathcal{C}/\mathcal{C}'|$ module structures on $T_\sigma(W)$.  

 We now illustrate the general discussion above with binary code VOAs arising from the free-fermion VOSA  $$V=L(1/2,0)\oplus L(1/2,1/2).$$ For an even binary code  $C$, let  $V_{C}$ denote the corresponding binary code VOA studied by Miyamoto \cite{Mi96}. We consider the following examples.
\begin{exa}  Let $C=\mathcal{E}$ be the code consisting of all even codewords of $\Z_2^k$.  Then  the dual code $$\mathcal{E}^* =\{(0, \dots,0), (1, \dots,1)\}.$$  Note that $(1,\dots,1)\in \mathcal{E}$ if $k$ is even, while $(1,\dots,1)\notin \mathcal{E}$ if $k$ is odd. 

For even  $k$ , set $n=k/2$. Then $$\mathcal{E}'= (1-\sigma)\mathcal{E}_k \cong  \mathcal{E}_n\oplus \mathcal{E}_n,$$ which is an index $2$ subcode of $ \mathcal{E}_k$.  Moreover, $V_{\mathcal{E}}$ is isomorphic to the lattice VOA $V_{D_n}$. In this case, $$|C/C'|=2,\qquad \mathcal{V}_O=\{L(1/2,1/16)\}.$$ Hence, there are exactly 
two irreducible $\sigma$-twisted modules of $V_{\mathcal{E}}$, and both are isomorphic to $T_\sigma(L(1/2,1/16))$ as  a $\sigma$-twisted $L(1/2,0)^k$-module.

For  odd $k$, we have  $$\mathcal{E}'= (1-\sigma)\mathcal{E}_k =\mathcal{E}_k, \qquad |C/C'|=1.$$  In this case, 
$V_{\mathcal{E}}$ is isomorphic to the affine VOA $L_{B_{(k-1)/2}}(1,0)$  for $k\geq 5$. If $k=3$, then  $V_{\mathcal{E}}\cong L_{A_1}(2,0)$.  
In this case, $$\mathcal{V}_E= \{ L(1/2,0), L(1/2,1/2)\},\quad
\mathcal{V}_O= \{ L(1/2,1/16)\}.$$ 

Since $\theta_1\in K$  when $k$ is odd, there exists $h\in H$ such that $h^{-1}\theta_1\sigma h =\sigma$. 
Therefore, there are exactly three irreducible $\sigma$-twisted modules for $V_{\mathcal{E}}$, namely,  $$T_\sigma(L(1/2,0)), \quad T_\sigma(L(1/2,1/2)),\quad  T_\sigma(L(1/2,1/16)) \circ h.$$

\end{exa}
Note that this example has also been discussed in \cite{MS}.
\begin{exa} Let $C=\langle (11111111), (11001100), (10101010)\rangle$. Then $C$ is stabilized by the $8$-cycle $\sigma=(1\ 2\ 3\ 4\ 5\ 6\ 7\ 8)$. 

In this case, $$C'= (1-\sigma)C= \langle (11111111), (10101010) \rangle,\quad |C/C'|=2.$$
Therefore, there are exactly two inequivalent irreducible
$\sigma$-twisted $V_C$-modules containing
$
T_\sigma\left(L(1/2,1/16)\right)
$
as a $\sigma$-twisted $L(1/2,0)^{\otimes 8}$-submodule.     

\end{exa}
\begin{exa} Let $k=6$ and let $\sigma=(1\ 2\ 3\ 4\ 5\ 6)$ act on $V_{\overline 0}^{\otimes 6}$  by cyclically permuting the tensor factors.  
Consider \[
\mathcal{C}
 = \{ (0,0,0,0,0,0), (1,1,1,1,1,1) \}, \qquad |\mathcal{C}| = 2.
\]
Then the corresponding code subVOA $V_{\mathcal{C}}$ is a simple $\mathbb{Z}_2$-graded
simple current extension of $V_{\overline{0}}^{\otimes 6}$.

 Since $k=6$ is even, the relevant twisted modules come from 
$$    W \in \mathcal{V}_O = \bigl\{ L(1/2,1/16) \bigr\}.$$
 Hence the relevant underlying $\sigma$-twisted $V_{\overline{0}}^{\otimes 6}$-module is  $    T_\sigma\bigl(L(1/2,1/16)\bigr).
$
 Since both elements of $\mathcal C$ are fixed by $\sigma$, we have
$
\mathcal C'=(1-\sigma)\mathcal C=\{ (0,0,0,0,0,0) \}$, so $ |\mathcal{C}'|=1$.  Hence 
$    \bigl|\mathcal{C}/\mathcal{C}'\bigr|
      = \frac{|\mathcal{C}|}{|\mathcal{C}'|}
      = \frac{2}{1}
      = 2.
$
Therefore,  by Theorem \ref{thm:perm_induction}, the underlying $V_{\overline{0}}^{\otimes 6}$-module $T_\sigma\bigl(L(1/2,1/16)\bigr)$ admits exactly two inequivalent
 $\sigma$-twisted $V_{\mathcal{C}}$-module structures.

\begin{rem}

In this example, the VOA $V_{\mathcal C}$ may also be interpreted in terms of
permutation orbifolds of Ising models.
Indeed, since $V_{\overline{0}}=L(1/2,0)$ and $V_{\overline{1}}=L(1/2,1/2)$, one has
$$
V_{\mathcal C}
= L(1/2,0)^{\otimes 6}
\;\oplus\;
L(1/2,1/2)^{\otimes 6},
$$
which is a $\mathbb Z_2$-graded simple current extension of
$L(1/2,0)^{\otimes 6}$.
The $6$-cycle $\sigma=(1\ 2\ 3\ 4\ 5\ 6)$ acts naturally on $L(1/2,0)^{\otimes 6}$, and the
extension above is $\sigma$-stable.
Thus $V_{\mathcal C}$ may be regarded as a simple current extension of the
permutation orbifold $\left(L(1/2,0)^{\otimes 6}\right)^{\langle\sigma\rangle}$.

\end{rem}

\end{exa}

\subsection{Moonshine VOA, $V_{\sqrt{2}E_8}^+$ and $3C$-element of the Monster}
In this subsection, we apply the general construction to the Moonshine VOA and 
an element of the $3C$-class of the Monster.

We first review the realization of $V^\natural$ from three copies of $V_{\sqrt{2}E_8}^+$ and the explicit construction of an  automorphism  $g\in \Aut(V^\natural)$ in the conjugacy class $3C$ from \cite{Mi04,Shi}.

We use $V_{\sqrt{2}E_8}^+$ to denote the fixed point subVOA of the lattice VOA $V_{\sqrt{2}E_8}$ by an involution $\theta$, which is a  lift of the $-1$ isometry of the lattice. The representation theory for $V_{\sqrt{2}E_8}^+$ has been studied by many authors \cite{ADL,Mi04,Shi}. In particular,  all
irreducible modules and their fusion rules are determined.  We use $[M]$ to denote the isomorphism class of $M$ for any irreducible $V_{\sqrt{2}E_8}^+$-module $M$. 

\begin{prop}[\cite{ADL,Shi}]
The VOA $V_{\sqrt{2}E_8}^+$ has exactly $2^{10}$ inequivalent irreducible modules, and all of them are simple current modules.
Moreover,  the set of all inequivalent irreducible modules $\irr (V_{\sqrt{2}E_8}^+)$ forms a 10-dimensional non-degenerate quadratic
space of  plus type over $\mathbb{F}_2$ with the sum defined by fusion rules and the quadratic form given by the conformal weights (in
$\frac{1}2 \Z/\Z$).
\end{prop}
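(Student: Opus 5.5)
The plan is to realize $V_{\sqrt{2}E_8}^+$ as the fixed-point subalgebra $V_L^+$ with $L=\sqrt{2}E_8$, and to obtain the three claims from three ingredients: the classification of irreducible $V_L^+$-modules in \cite{ADL}, a quantum-dimension count, and the modular tensor category structure recalled in Section 3.2.

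\emph{Step 1 (counting).} First compute $L^*=\frac{1}{\sqrt2}E_8$. This gives $L^*/L\cong E_8/2E_8\cong\mathbb F_2^8$, so $2\lambda\in L$ for every $\lambda\in L^*$. By \cite{ADL}, every such coset contributes exactly two untwisted irreducible modules $V_{\lambda+L}^{\pm}$, which gives $2\cdot 2^8=2^9$ modules. The twisted modules are $V_L^{T_\chi,\pm}$, where $T_\chi$ runs over the irreducible modules of $\hat L/K$ with $K=\{\theta(a)a^{-1}\}$. All inner products in $L$ are even, so the commutator map on $L/2L$ is trivial. Hence $\hat L/K$ is abelian with $2^8$ central characters, each $T_\chi$ is one-dimensional, and this gives another $2\cdot 2^8=2^9$ modules. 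The total is $2^{10}$.

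\emph{Step 2 (simple currents).} By \cite{CM,Mi}, $V_L^+=V_L^{\langle\theta\rangle}$ is rational and $C_2$-cofinite, so $\mathcal C_{V_L^+}$ is modular. Its global dimension is $|\langle\theta\rangle|^2\cdot\dim\mathcal C_{V_L}=4\cdot|L^*/L|=2^{10}$. The global dimension equals $\sum_M(\mathrm{qdim}\,M)^2$, and each term satisfies $(\mathrm{qdim}\,M)^2\ge1$. Since there are $2^{10}$ irreducible modules, every module must have $\mathrm{qdim}\,M=1$, which means every irreducible module is a simple current. The fusion rules therefore give $\irr(V_L^+)$ the structure of an abelian group of order $2^{10}$. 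To see that this group has exponent $2$, check that each module is self-contragredient. For $V_{\lambda+L}^\pm$ this follows from $-\lambda+L=\lambda+L$, and for the twisted modules from the explicit description of $T_\chi$ (characters of an elementary abelian $2$-group are real). Hence $\irr(V_L^+)\cong\mathbb F_2^{10}$.

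\emph{Step 3 (quadratic form and type).} Define $q(M)=\rho(M)\bmod\Z$. All conformal weights lie in $\frac12\Z$; for the twisted sector the lowest weight is $\frac{8}{16}=\frac12$ or $1$. For a pointed modular category, the ribbon twist $e^{2\pi i q(M)}$ is a quadratic form on the fusion group. Its associated bilinear form is the double braiding, which is nondegenerate by modularity. This makes $(\irr(V_L^+),q)$ a nondegenerate quadratic space of dimension $10$ over $\mathbb F_2$.

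I expect the main obstacle to be determining the Witt type. I would do it by counting singular vectors directly from the lowest weights. For $V^\pm_{\lambda+L}$ the lowest weight is $\min\langle\mu,\mu\rangle/2$ over $\mu\in\lambda+L$, shifted by $1$ for the $-$ part when $\lambda\in L$. For the twisted modules the weights are $\frac12$ and $1$, split evenly between the $\pm$ parts. A plus-type space of dimension $10$ has exactly $2^9+2^4=528$ vectors with $q=0$, whereas a minus-type space has $496$. So it suffices to verify that exactly $528$ modules have integral weight. The norms $\frac12\langle\mu,\mu\rangle$ of minimal coset representatives are governed by the $E_8$ classes mod $2E_8$: $1$ zero class, $120$ classes of norm $2$, and $135$ classes of norm $4$. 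With these norms the tally should come out to $528$.
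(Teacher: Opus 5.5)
The step that does not work as written is your proof that the fusion group has exponent $2$.

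From $-\lambda+L=\lambda+L$ you only learn that $V_{\lambda+L}$ is self-dual as a $V_L$-module. Hence $(V^+_{\lambda+L})'$ is either $V^+_{\lambda+L}$ or $V^-_{\lambda+L}$. Since $\theta$ has trace $0$ on $V_{\lambda+L}$ when $\lambda\notin L$, these two modules have identical characters, and nothing in your argument decides between them. Both outcomes occur in general. For $L=A_1=\Z\beta$ one has $V_L^+\cong V_{\Z\gamma}$ with $\langle\gamma,\gamma\rangle=8$, so the fusion group is $\Z/8\Z$. There $V^{\pm}_{\beta/2+L}$ (the two modules of weight $\frac14$) are the two elements of order $4$, contragredient to each other, even though $-\beta/2+L=\beta/2+L$. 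So something specific to $\sqrt2E_8$ must be used.

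The cleanest repair uses what you already set up in Step 3. Once the category is known to be pointed, the double braiding $b(a,c)=\theta_{a+c}\theta_a^{-1}\theta_c^{-1}$ is a bicharacter on the fusion group, nondegenerate by modularity. Every conformal weight lies in $\frac12\Z$, so every twist is $\pm1$ and $b$ is $\{\pm1\}$-valued. Then $b(2a,c)=b(a,c)^2=1$ for all $c$, and nondegeneracy forces $2a=0$. This also replaces your appeal to real characters of the $T_\chi$, which has the same blind spot about the $\pm$ labels. There it is harmless, since the weights $\frac12$ and $1$ separate $V_L^{T_\chi,+}$ from $V_L^{T_\chi,-}$.

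With that fix, your route is a genuine alternative to the paper, which gives no argument and simply cites the classification and explicit fusion rules of \cite{ADL} and \cite{Shi}. You use only the list of irreducible modules and their lowest weights. Pointedness, the group structure and the nondegenerate quadratic form then come from the modular tensor category, at the cost of invoking the orbifold global-dimension formula and modularity.

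Two smaller points. First, the bound $(\mathrm{qdim}\,M)^2\ge 1$ is not automatic in a modular category; it fails for the Galois conjugate of the Fibonacci category. You should cite the Dong--Jiao--Xu result $\mathrm{qdim}\,M\ge 1$, which applies here because every irreducible $V_L^+$-module other than $V_L^+$ has lowest weight at least $\frac12$. Second, the tally you left open does come out right. The modules of integral weight are $V_L^{\pm}$, the $2\cdot 135$ modules $V^{\pm}_{\lambda+L}$ over the norm-$4$ classes, and the $2^8$ modules $V_L^{T_\chi,-}$, giving $2+270+256=528$. The remaining $2\cdot 120+2^8=496$ modules all have weight $\frac12$, so the form is of plus type.
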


Let $\Phi$ and $\Psi$ be maximal totally isotropic subspaces of $\irr (V_{\sqrt{2}E_8}^+)$, that is, maximal subspaces on which the quadratic form vanishes, such that $\Phi\cap \Psi = 0$. Let $$ (V_{\sqrt{2}E_8}^+)^3=V_{\sqrt{2}E_8}^+ \otimes  V_{\sqrt{2}E_8}^+ \otimes V_{\sqrt{2}E_8}^+$$ denote the tensor product of three copies of $V_{\sqrt{2}E_8}^+$.  For any $\{i, j\}\subset \{1,2,3\}$, set
\[
\Phi_{(i,j)} = \left\{(a_1, a_2,a_3) \in \irr (V_{\sqrt{2}E_8}^+)^3\mid a_i = a_j \in \Phi,\text{ and } a_k = 0 \text{ if } k\notin \{i,j\}\right\}
\]
and
\[
\Psi_{(1,2,3)} = \left\{(b, b , b)\in  \irr (V_{\sqrt{2}E_8}^+)^3\mid b\in \Psi\right\}.
\]

\begin{thm}[\cite{Shi}]\label{trans}
Let $ \mathcal{S}(\Phi,\Psi) = \text{Span}_{\mathbb{F}_2} \{\Phi_{(1,2)}, \Phi_{(1,3)}, \Psi_{(1,2,3)} \}.$ Then  $\mathcal{S}(\Phi,\Psi)$ is a
maximal totally isotropic subspace of $\irr (V_{\sqrt{2}E_8}^+)^3$, and
\[
V^\natural \cong V(\mathcal{S}(\Phi,\Psi)) =\bigoplus_{[A\otimes B\otimes C] \in \mathcal{S}(\Phi,\Psi)} A\otimes B\otimes C.
\]
as modules for $ (V_{\sqrt{2}E_8}^+)^3$.
Moreover, the automorphism group of $V^\natural$ is transitive on the set of all full subVOAs isomorphic to $ (V_{\sqrt{2}E_8}^+)^{3}$.
\end{thm}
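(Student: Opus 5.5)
The plan has three parts: show that $\mathcal{S}(\Phi,\Psi)$ is a maximal totally isotropic subspace of the quadratic space $\irr(V_{\sqrt{2}E_8}^+)^3$; identify the resulting simple current extension with $V^\natural$; and derive transitivity from the uniqueness of simple current extensions (Proposition \ref{prop:2.3}(2)).

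\textbf{Maximal isotropy.} The quadratic form $q$ on $\irr(V_{\sqrt{2}E_8}^+)^3$ is the conformal weight mod $\Z$ of $A\otimes B\otimes C$, so it is additive over the three factors. Let $\langle\cdot,\cdot\rangle$ be its associated bilinear form.
\begin{itemize}
\item For $(a,a,0)\in\Phi_{(1,2)}$, and likewise $(a,0,a)\in\Phi_{(1,3)}$, we get $q=2q(a)=0$.
\item For $(b,b,b)\in\Psi_{(1,2,3)}$, we get $q=3q(b)=0$, since $\Psi$ is totally isotropic.
\item Mixed pairings vanish. For $(a,a,0)$ against $(b,b,b)$ the pairing is $2\langle a,b\rangle=0$. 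For $(a,a,0)$ against $(a',0,a')$ it is $\langle a,a'\rangle=0$, since $\Phi$ is totally isotropic.
\end{itemize}
Hence the span is totally isotropic.

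Now count dimensions. Each of $\Phi_{(1,2)}$, $\Phi_{(1,3)}$ and $\Psi_{(1,2,3)}$ has dimension $5$. They are independent: if $(a+a',a,a')=(b,b,b)$, then comparing components gives $a=a'=b$. So $a=b\in\Phi\cap\Psi=0$. The span therefore has dimension $15=\tfrac12\cdot 30$, which is maximal.

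\textbf{Identification with $V^\natural$.}
\begin{enumerate}
\item Every module $A\otimes B\otimes C$ is a simple current (by the Proposition of \cite{ADL,Shi} quoted above). Isotropy gives integral conformal weights and trivial monodromy. Hence $V(\mathcal{S})$ carries a simple VOA structure, unique by Proposition \ref{prop:2.3}(2).
\item Maximality together with nondegeneracy of the form makes $V(\mathcal{S})$ holomorphic of central charge $24$.
\item Next I show $V(\mathcal{S})_1=0$. From the known lowest weights of irreducible $V_{\sqrt{2}E_8}^+$-modules (nonzero classes have weight $1/2$, $1$ or $3/2$, with weight $1/2$ exactly on the nonsingular vectors), a weight-one vector can only come from $0\otimes 0\otimes a$ with $q(a)=0$ and weight $1$, or from two weight-$\tfrac12$ factors. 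The first case is excluded because $\mathcal{S}$ contains no vector with two zero coordinates. The second case is excluded because $\Phi$ and $\Psi$ are totally singular, so all nonzero coordinates of vectors in $\mathcal{S}$ are singular.
\item $V_{\sqrt{2}E_8}^+$ contains a Virasoro frame of $16$ Ising vectors, so $V(\mathcal{S})$ is a holomorphic framed VOA of central charge $24$ with $V_1=0$. I would then invoke the known uniqueness of such framed VOAs (Lam--Yamauchi) to conclude $V(\mathcal{S})\cong V^\natural$.
\end{enumerate}

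\textbf{Transitivity.} Let $U\subset V^\natural$ be a full subVOA isomorphic to $(V_{\sqrt{2}E_8}^+)^3$. Then $V^\natural$ decomposes under $U$ into simple currents indexed by a maximal totally isotropic subspace $\mathcal{S}'$; the condition $V^\natural_1=0$ forbids vectors with two zero coordinates and forbids nonsingular coordinates. I would show that every such $\mathcal{S}'$ is of the form $\mathcal{S}(\Phi',\Psi')$ for some pair with $\Phi'\cap\Psi'=0$. The argument is to project $\mathcal{S}'$ to each coordinate and analyse the kernels. The kernels of the projections should yield $\Phi'_{(i,j)}$, and a complement yields a diagonal $\Psi'$.

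The group $O^+(10,2)\wr S_3$, which is induced by $\Aut(V_{\sqrt{2}E_8}^+)^{\wr 3}$, acts transitively on pairs $(\Phi',\Psi')$ of complementary maximal totally singular subspaces. So there is an automorphism of $U$ carrying $\mathcal{S}'$ to $\mathcal{S}(\Phi,\Psi)$. By Proposition \ref{prop:2.3}(2), this automorphism extends to an isomorphism of the extensions, and composing with the identification above gives an element of $\Aut(V^\natural)$ mapping one copy of $(V_{\sqrt{2}E_8}^+)^3$ to the other.

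The main obstacle is this classification of the admissible $\mathcal{S}'$. It is a finite geometry problem over $\mathbb{F}_2$, and the exclusion of weight-one vectors must be tracked carefully to rule out non-product shapes.
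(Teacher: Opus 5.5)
The paper does not prove this theorem; it is quoted from \cite{Shi}, so your outline has to stand on its own. Several parts are fine: the total-singularity and dimension count, holomorphy from $\mathcal{S}^\perp=\mathcal{S}$, and the appeal to the Lam--Yamauchi characterization of $V^\natural$ among holomorphic framed VOAs with $V_1=0$.

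Your argument for $V(\mathcal{S})_1=0$, however, rests on a false claim. It is not true that all nonzero coordinates of vectors in $\mathcal{S}(\Phi,\Psi)$ are singular. Because $\Phi\cap\Psi=0$, the pairing $\Phi\times\Psi\to\mathbb{F}_2$ is perfect, so there are $a\in\Phi$ and $b\in\Psi$ with $\langle a,b\rangle=1$. Then $(a+b,a+b,b)\in\mathcal{S}$ and $q(a+b)=\langle a,b\rangle=1$, so two of its coordinates have weight $1/2$. The conclusion $V(\mathcal{S})_1=0$ still holds, but for a different reason. A vector $(a+a'+b,\,a+b,\,a'+b)$ of $\mathcal{S}$ with some zero coordinate lies in $\Phi_{(1,2)}\cup\Phi_{(1,3)}\cup\Phi_{(2,3)}$; for instance, $a'+b=0$ forces $a'=b\in\Phi\cap\Psi=0$. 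Hence its nonzero coordinates lie in $\Phi$, and its lowest weight is $2$.

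The same wrong constraint reappears in your transitivity argument, and there it is fatal rather than cosmetic. Suppose every coordinate of every vector of $\mathcal{S}'$ were singular. Then each projection $\pi_i(\mathcal{S}')$ would be totally singular of dimension at most $5$, forcing $\mathcal{S}'=\pi_1(\mathcal{S}')\oplus\pi_2(\mathcal{S}')\oplus\pi_3(\mathcal{S}')$. That space contains vectors with two zero coordinates, so no admissible $\mathcal{S}'$ would exist at all. What $V^\natural_1=0$ actually gives is: (a) no nonzero vector with two zero coordinates, and (b) no vector with a zero coordinate whose other two coordinates are nonsingular. Also, $\mathcal{S}'$ has the form $\mathcal{S}(\Phi',\Psi')$ only after applying an element of $O^+_{10}(2)^3$, not literally.

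With (a) and (b), the classification you leave open is short:
\begin{enumerate}
\item Since $\dim\mathcal{S}'=15$ and $\pi_3$ has rank at most $10$, $\dim\ker\pi_3\ge 5$.
\item By (a), $\ker\pi_3=\{(x,f(x),0)\mid x\in X_3\}$ with $f$ injective.
\item Since $q(x)=q(f(x))$, condition (b) makes $X_3$ totally singular, hence maximal of dimension $5$.
\item Orthogonality of $\ker\pi_2$ and $\ker\pi_3$ gives $X_2\subseteq X_3^\perp=X_3$.
\item Linear isomorphisms between maximal totally singular subspaces extend to isometries. So $O^+_{10}(2)^3$ lets you assume $\ker\pi_3=\Phi_{(1,2)}$ and $\ker\pi_2=\Phi_{(1,3)}$.
\item Set $P=\Phi_{(1,2)}+\Phi_{(1,3)}$ and let $\Delta$ be the diagonal. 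Then $P^\perp=P\oplus\Delta$, so $\mathcal{S}'=P\oplus\{(b,b,b)\mid b\in\Psi'\}$ with $\Psi'$ maximal totally singular.
\item Condition (a) gives $\Phi\cap\Psi'=0$, since otherwise $(0,0,c)\in\mathcal{S}'$.
\item The stabilizer of $\Phi$ in $O^+_{10}(2)$ is transitive on such complements $\Psi'$.
\end{enumerate}
This classification also yields $V^\natural\cong V(\mathcal{S}(\Phi,\Psi))$ without Lam--Yamauchi, once one full copy of $(V^+_{\sqrt{2}E_8})^3$ in $V^\natural$ is exhibited. For example, $\sqrt{2}E_8^{\oplus 3}\subset\Lambda$ and $V_\Lambda^+\subset V^\natural$ provide one.
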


Now let $g$ be an automorphism of $ (V_{\sqrt{2}E_8}^+)^{3}$ given by the cyclic permutation of the $3$ tensor factors. 
Since $g$ preserves $\mathcal{S}(\Phi,\Psi)$, $g$ lifts to an automorphism of $V^\natural$.  

\begin{rem}
It was shown in \cite{Mi04} that the cyclic permutation of $ (V_{\sqrt{2}E_8}^+)^3$ lifts to an element of the conjugacy class $3C$ in $\Aut(V^\natural)$.   
\end{rem}

In this case, the grading group is  $$D= \mathcal{S}(\Phi,\Psi) \le\mathbb{F}_2^{30}.$$ 
Since all irreducible modules of $V_{\sqrt{2}E_8}^+$ are simple current modules, for any irreducible module $W\in \irr (V_{\sqrt{2}E_8}^+)$
we have 
\[
D_W= \{ [M_1\otimes M_2\otimes M_3]\in D\mid  M_1\boxtimes_{V_{\sqrt{2}E_8}^+} M_2\boxtimes_{V_{\sqrt{2}E_8}^+} M_3\cong V_{\sqrt{2}E_8}^+\}.
\]
Therefore, $$D_W= \text{Span}_{\mathbb{F}_2} \{\Phi_{(1,2)}, \Phi_{(1,3)} \}.$$ Note that 
$\Phi_{(2,3)}$ is also contained in $D_W$. 

By Lemma \ref{D'}, we also have  $$D'= (1-g)D = \text{Span}_{\mathbb{F}_2} \{\Phi_{(1,2)}, \Phi_{(1,3)} \}.$$ Hence, $D_W =D'$. Therefore, there is at most one irreducible 
$g$-twisted module of $U_{D_W}$ that contains $T_g(W)$, and it is isomorphic to $T_g(W)$
as a $g$-twisted module for $ (V_{\sqrt{2}E_8}^+)^{3}$.   
Since $D/D_W$ is naturally identified with $\Psi$ through
$b\mapsto (b,b,b)+D_W$, 
Theorem \ref{thm4.2} gives that the irreducible
$g$-twisted $V^\natural$-module has the form 
\[
\bigoplus_{[A]\in \Psi} T_g (A\boxtimes W)
\]
for some $W\in \irr (V_{\sqrt{2}E_8}^+)$.  

Since $g$ has  order $3$ and $\irr (V_{\sqrt{2}E_8}^+)\cong 2^{10}_+$,  $W$ must be orthogonal to $\Psi$; otherwise, there is an $A\in \Psi$ such that the difference between the conformal weights of $W$ and $A\boxtimes W$ is in $1/2+\Z$ . Then the weights of $T_g(W)$ and $T_g (A\boxtimes W)$ differ by $1/6 \mod 1/3 \Z$.  The module $\bigoplus_{[A]\in \Psi} T_g (A\boxtimes W)$  is  not a $\Z_3$-twisted module.

Thus,  $W\in \Psi$ since $\Psi$ is a maximal totally isotropic subspace of $\irr (V_{\sqrt{2}E_8}^+)$. Without loss, we may assume $W= V_{\sqrt{2}E_8}^+$. 
Then  the unique irreducible $g$-twisted module of $V^\natural$ 
has the form 
\[
\oplus_{[A]\in \Psi} T_g (A)= T_g(\oplus_{[A]\in \Psi} A). 
\]
 Since $\Psi$ is a maximal totally isotropic subspace of $\irr (V_{\sqrt{2}E_8}^+)$,  $X= \oplus_{[A]\in \Psi} A$ has a structure of holomorphic VOA of central charge $8$. Thus  $X\cong V_{E_8}$, the lattice VOA associated with the root lattice of type $E_8$ \cite{DM04}. We therefore obtain the following result.

\begin{thm}
The unique $3C$-twisted module of the Moonshine VOA $V^\natural$ is isomorphic to 
$T_g(V_{E_8})$ as a twisted module of $ (V_{\sqrt{2}E_8}^+)^{ 3}$. 
\end{thm}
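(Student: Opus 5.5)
The plan is to assemble the statement from the discussion that precedes it, organised in three steps: existence, form, and uniqueness.

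\emph{Existence.} Let $g$ be a lift of the cyclic permutation to $\Aut(V^\natural)$, which lies in class $3C$ by \cite{Mi04}. The Moonshine VOA is $C_2$-cofinite, rational and holomorphic, so by \cite[Theorem 9.1]{DLM2} it has an irreducible $g$-twisted module. For a holomorphic VOA this module is unique up to isomorphism; I would quote this standard consequence of modular invariance (Dong--Li--Mason) rather than reprove it. It therefore suffices to identify the restriction of this module $X$ to $(V_{\sqrt{2}E_8}^+)^{3}$.

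\emph{Form.} Restrict $X$ to $(V_{\sqrt{2}E_8}^+)^{3}$. By Theorem \ref{thm:3.1}, it contains some $T_g(W)$ with $W\in\irr(V_{\sqrt{2}E_8}^+)$. Since every irreducible $V_{\sqrt{2}E_8}^+$-module is a simple current, $D_W$ is the set of codewords whose three components fuse to $0$. Computing inside $\mathcal{S}(\Phi,\Psi)$, a general element has the form $\phi+(b,b,b)$ with $\phi\in\mathrm{Span}\{\Phi_{(1,2)},\Phi_{(1,3)}\}$. Its component sum is $b+b+b=b$, so it lies in $D_W$ exactly when $b=0$. Hence $D_W=\mathrm{Span}\{\Phi_{(1,2)},\Phi_{(1,3)}\}$. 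By Lemma \ref{D'}, $D'=(1-g)D$. Evaluating $1-g$ on the generators shows that $(1-g)D$ contains all of $\Phi_{(i,j)}$ and is killed by $b$ only through differences, so $D'=D_W$. Theorem \ref{thm:perm_induction} then gives $j=1$ and $n=1$. Combining this with Theorem \ref{thm4.2} and Theorem \ref{FP}, and using the identification $D/D_W\cong\Psi$, we obtain
\[
X\cong\bigoplus_{[A]\in\Psi}T_g(A\boxtimes W).
\]

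\emph{Uniqueness of $W$.} I expect this step to be the main obstacle. The $L(0)$-weights of $X$ must lie in a single coset of $\frac13\Z$, since $g$ has order $3$. From the $\Delta_k$-formula, the weight of $T_g(M)$ is $\frac{\rho(M)}{3}$ plus a constant, so the quantity $\rho(A\boxtimes W)-\rho(W)$ must lie in $\Z$ for all $A\in\Psi$. This difference equals the bilinear form $\langle A,W\rangle\in\frac12\Z/\Z$ because $q(A)=0$. Hence $W\in\Psi^{\perp}=\Psi$, using that $\Psi$ is maximal totally isotropic in $2^{10}_+$. Translating $W$ by an element of $\Psi$ leaves the direct sum unchanged, so we may take $W=V_{\sqrt{2}E_8}^+$. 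This gives
\[
X\cong T_g\Bigl(\bigoplus_{[A]\in\Psi}A\Bigr),
\]
where the identification uses the additivity of $T_g$.

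Finally, $\bigoplus_{A\in\Psi}A$ is a simple current extension of $V_{\sqrt{2}E_8}^+$ graded by a totally isotropic space, so by Proposition \ref{prop:2.3} it is a VOA. It is holomorphic because $|\Psi|^2=2^{10}$, and it has central charge $8$. By \cite{DM04} it is isomorphic to $V_{E_8}$. The point requiring care is that the twisted module structure is the one over $V_{E_8}$-restricted, but the statement only concerns the $(V_{\sqrt{2}E_8}^+)^3$-module structure, so isomorphism of the underlying $V_{\sqrt{2}E_8}^+$-modules suffices.
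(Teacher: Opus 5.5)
Your proposal is correct and follows the paper's argument essentially step for step: $D_W=\mathrm{Span}\{\Phi_{(1,2)},\Phi_{(1,3)}\}=(1-g)D=D'$, then the form $\bigoplus_{[A]\in\Psi}T_g(A\boxtimes W)$, then the weight argument (all weights in one coset of $\frac13\Z$) forcing $W\in\Psi^{\perp}=\Psi$, and finally the identification $\bigoplus_{[A]\in\Psi}A\cong V_{E_8}$ via \cite{DM04}. The only slip is that you cite Proposition \ref{prop:2.3} for the existence of a VOA structure on $\bigoplus_{[A]\in\Psi}A$, but that proposition presupposes the extension already exists; the paper simply asserts this existence, which comes from the known theory of $V_{\sqrt{2}E_8}^+$ (or from general existence results for simple current extensions with integral weights).
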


\subsection{Some examples with multiplicity $n>1$}  
In this subsection, we give an example with multiplicity $n>1$ in the induced-module construction, illustrating the role of the quotient $D_W/D'$.

Let $L=\Z\alpha$ be a rank one lattice with $\langle \alpha, \alpha\rangle =4$. Then $L\cong \sqrt{2}A_1$ and the lattice VOA $V_L$ has 4 irreducible modules, up to equivalence, namely,  $$V_L, \quad V_{\frac{1}4\alpha+L},  \quad V_{\frac{1}2\alpha+L}, \quad 
V_{-\frac{1}4\alpha+L}.$$

Let $\mathcal{E}_8\subset\mathbb{Z}_2^8$ be the even binary code, and define 
$$2\mathcal{E}_8:=\{(2c_1,\dots,2c_8)\mid c\in\mathcal{E}_8\}\subset\mathbb{Z}_4^8,$$ viewed as a $\mathbb Z_4$-subcode of $\Z_4^{8}$. 
Let $D \subset \Z_4^{8}$ be a $\Z_4$-code generated by 
$2\mathcal{E}_8$  and the all-one vector $(1,1,1,1,1,1,1,1)$.  
Then  $D$ is  a self-dual $\Z_4$-code and 
\[
U:= \bigoplus_{(d_1, \dots,d_8)\in D} 
V_{\frac{1}4d_1\alpha + L} \otimes \cdots \otimes V_{\frac{1}4d_8 \alpha+L}
\] 
is  a holomorphic VOA of central charge $8$; hence,  $U\cong V_{E_8}$. 

Let $\sigma=(1\,2 \cdots 8)$ be an $8$-cycle, which acts on $V_L^{\otimes 8}$ as a cyclic permutation of the tensor factors. Note that $\sigma(D)=D$.
Hence, $\sigma$ lifts to an automorphism of $U$. 

The code $D$ contains a subcode $C=2\mathcal{E}_8$ and the  corresponding code subVOA is 
\[
U_C=\bigoplus_{(c_1, \dots,c_8)\in \mathcal{E}_8} 
V_{\frac{1}2c_1\alpha + L} \otimes \cdots \otimes V_{\frac{1}2c_8 \alpha+L}  \cong V_{D_8}.
\]
Moreover,  $$X= V_L\oplus V_{\frac{1}2\alpha+L}\cong V_\Z$$ is a $\frac{1}{2}\mathbb Z$-graded VOSA with $$X_{\bar{0}}= V_L, \qquad X_{\bar{1}} = V_{\frac{1}2\alpha+L}.$$  
Using the  notation of  Section \ref{VOSA}, we have 
\[
\mathcal{V}_O =\{V_{\frac{1}4\alpha+L},  V_{-\frac{1}4\alpha+L}\}.
\] 
Thus  $T_\sigma(V_{\frac{1}4\alpha+L})$ and $T_\sigma(V_{-\frac{1}4\alpha+L})$ 
can be viewed as irreducible $\sigma$-twisted $V_L^{\otimes 8}$-modules.   Since
$$C'=(1-\sigma)C= 2(\mathcal{E}_4\oplus \mathcal{E}_4)$$ and hence $|C/C'|=2$,  Theorem \ref{thm:perm_induction} implies that each of these modules admits exactly two inequivalent
$\sigma$-twisted $U_C$-module structures.

Now consider the full extension $U$. For  any $W\in \irr(V_{L})$, we have $D_W=D$. Moreover,  $$D'=(1-\sigma) D= 2(\mathcal{E}_4\oplus \mathcal{E}_4),$$ and hence  $d=|D/D'|=4$. Since $U$ is holomorphic, it has a unique $\sigma$-twisted module. Hence,  in the notation of Theorem \ref{thm:perm_induction}, we have $j=1$. It follows that the multiplicity of $T_\sigma(W)$ in the unique
irreducible $\sigma$-twisted $U$-module is $$n=\sqrt{d/j}=\sqrt{4}=2.$$ 
\section*{Acknowledgements}
The first author was  supported by the National Science and Technology Council of Taiwan (Grant No. NSTC 113-2115-M-001-012-MY3).  
This work was partially carried out during the second author's visit to the
Institut des Hautes \'Etudes Scientifiques (IHES), whose support, hospitality,
and stimulating research environment are gratefully acknowledged. The second
author was partially supported by the Natural Science Foundation of Xiamen
Municipality (Grant No. 3502Z202473007), the Natural Science Foundation of
Fujian Province (Grant No. 2024J01027), and the National Natural Science
Foundation of China (Grant Nos. 12571031 and 12131018).

\noindent {\bf Ching Hung Lam}: Institute of Mathematics, Academia Sinica, Taipei 10617, Taiwan; chlam@math.sinica.edu.tw

\noindent {\bf Nina Yu}: School of Mathematical Sciences, Xiamen University, Fujian, 361005, China; ninayu@xmu.edu.cn
\end{document}